\DeclareMathOperator{\sgn}{sgn}
\DeclareMathOperator{\dist}{dist}
\DeclareMathOperator{\erf}{erf}
\DeclareMathOperator{\supp}{supp}
\DeclareMathOperator{\Lip}{\mathrm{Lip}}
\DeclareMathOperator{\diam}{\mathrm{diam}}
\DeclareMathOperator{\Err}{\mathrm{Err}}
\DeclareMathOperator{\argmin}{\mathrm{argmin}}
\DeclareMathOperator{\conv}{conv}
\newcommand{\eps}{\varepsilon}
\newcommand{\Rd}{\mathbb{R}^d}
\newcommand{\Om}{\Omega}
\newcommand{\SK}{\mathcal{S}}
\newcommand{\Ac}{\mathcal{A}}
\newcommand{\R}{\mathbb{R}}
\newcommand{\subs}{\subseteq}
\newcommand{\sups}{\supseteq}
\newcommand{\setm}{\setminus}
\newcommand{\Sd}{\mathbb{S}^{d-1}}
\newcommand{\tom}{\tilde{\sigma}}
\newcommand{\Oc}{\mathcal{O}}
\newcommand{\ct}{\tilde{\chi}}
\newcommand{\cth}{\tilde{\chi}_h}
\newtheorem{theorem}{Theorem}[section]
\newtheorem{prop}[theorem]{Proposition}
\newtheorem{corollary}[theorem]{Corollary}
\newtheorem{lemma}[theorem]{Lemma}
\newtheorem*{remark}{Remark}
\theoremstyle{definition}
\newtheorem{definition}[theorem]{Definition}
\newtheorem{example}[theorem]{Example}
\title[Thresholding for mean convex sets]{Strong convergence of the thresholding scheme for the mean curvature flow\\ of mean convex sets}
\author{Jakob Fuchs}
\address{Jakob Fuchs, AG Biomathematik, TU Dortmund University, Vogelpothsweg 87, 44227 Dortmund, Germany}
\email{jakob.fuchs@tu-dortmund.de}
\author{Tim Laux}
\address{Tim Laux, Hausdorff Center for Mathematics and Institute for Applied Mathematics, University of Bonn, Villa Maria, Endenicher Allee 62, 53115 Bonn, Germany}
\email{tim.laux@hcm.uni-bonn.de}
\begin{document}
	
\begin{abstract}
	In this work, we analyze Merriman, Bence and Osher's thresholding scheme, a time discretization for mean curvature flow. We restrict to the two-phase setting and mean convex initial conditions. In the sense of the minimizing movements interpretation of Esedo\u{g}lu and Otto we show the time-integrated energy of the approximation to converge to the time-integrated energy of the limit. 
	As a corollary, the conditional convergence results of Otto and one of the authors become unconditional in the two-phase mean convex case.
	Our results are general enough to handle the extension of the scheme to anisotropic flows for which a non-negative kernel can be chosen.

	\medskip

	\noindent \textbf{Keywords:} 
	Thresholding, diffusion generated motion, mean curvature flow, mean convexity, outward minimality
	\medskip
	
	\noindent \textbf{Mathematical Subject Classification (MSC 2020)}: 
	   53E10, %Flows related to mean curvature
	   35B51 (primary), %Comparison principles in context of PDEs
	   65M12, %Stability and convergence of numerical methods for initial value and initial-boundary value problems involving PDEs
	   35A21 %Singularity in context of PDEs
	   (secondary)
\end{abstract}

\maketitle

\section{Introduction}
\label{sec:intro}
	Thresholding is an efficient algorithm for approximating the motion by mean curvature of a surface, which is the $L^2$-gradient flow of the area functional.
	The scheme was introduced by Merriman, Bence, and Osher in 1992 \cite{MBO}. 
	It obtains subsequent time steps by \emph{(i)} diffusion, i.e., convolving with a Gaussian kernel, and \emph{(ii)} thresholding, i.e., taking a super level set.
	We analyze an extension of the scheme to anisotropic mean curvature motion, which arises as weighted gradient descent of normal dependent surface energies. 
	This is achieved by choosing a convolution kernel $K$, different from the original Gaussian $G$. 
	Analogous to the Gaussian we denote the dependence on the time step size of the kernel
	\begin{align*}
	K_h(x) \coloneqq h^{-d/2}K \left( \frac{x}{\sqrt{h}} \right).
	\end{align*}
	The process was shown by Esedo\u{g}lu and Otto \cite{Esedoglu:Otto} to be equivalent to the variational problem (see Lemma~\ref{equiv:rep})
	\begin{align}
	\label{variational}
		E^h_{k+1} \in \underset{E \subs \R^d}{\argmin} \bigg\{& \frac{1}{\sqrt{h}} \int_{E^c} K_h \ast \chi_{E} \,dx \\
	&+ \frac{1}{\sqrt{h}} \int_{\Rd} (\chi_{E_k} -\chi_{E}) K_h \ast (\chi_{E_k} - \chi_E)dx \bigg\}, \notag
	\end{align}
	which can be viewed as an approximate version of minimizing movements, the natural time discretization for gradient flows.
	The first term is to be understood as a surface energy and the second as a mobility cost function. 
	Indeed, the first term was proven by Elsey and Esedo\u{g}lu \cite{Elsey:Esedoglu} to converge to an associated anisotropic perimeter as $h \downarrow 0$ under suitable conditions on the kernel, i.e., 
	\begin{align*}
		\frac{1}{\sqrt{h}} \int_{E^c} K_h \ast \chi_E\,dx \eqqcolon P_{K,h}(E) \rightarrow P_{\sigma_K}(E),
	\end{align*}
	where $P_{\sigma_K}$ is the anisotropic perimeter as defined in the beginning of Section~\ref{sec:initial step} with surface tension $\sigma_K$ as defined in Proposition~\ref{anis:per}. For the Gaussian $G$ we calculate $\sigma_G$ explicitly and denote it by $\sigma$:
\begin{align*}
\sigma_G(x) = \frac{1}{\sqrt{\pi}}|x| = \sigma(x).
\end{align*}
	
	The convergence of the original scheme in the two-phase case to viscosity solutions has been established in the isotropic case by Evans \cite{Evans}, and Barles and Georgelin \cite{Barles:Georgelin}, and in the anisotropic case by Ishii, Pires and Souganidis \cite{Ishii:Pires:Souganidis}. 
	More recently, in the multi-phase case, several conditional convergence results were established, see \cite{Laux:Otto,Laux:Otto:Brakke,Laux:Otto:de:giorgi,Laux:Lelmi}.
	 
	In the following theorem we collect their two-phase expression.
	\begin{theorem}
		\label{Laux}
		Let $E_0 \subs \R^d$ be a set of finite perimeter and $E_h(t)$ the approximation for time step size $h$ by the original thresholding scheme on a finite time horizon $[0,T]$. Then for any sequence there is a subsequence  $h \downarrow 0$, such that $P_{\sigma}(E_{h}(t)) \leq P_{\sigma}(E_0)$ and $E_{h}$ converges to a limit $E$ in $L^1$ and a.e. in space-time. Provided that
		\begin{equation}
		\label{eq:convergence condition}
			\underset{h \downarrow 0}{\lim} \int_0^T P_{G,h}(E_h(t)) dt = \int_0^T P_{\sigma}(E(t)) dt,
		\end{equation}
		the limit $E$ is a weak solution in three senses, namely, a distributional solution, a unit-density Brakke flow, and a De~Giorgi solution.
	\end{theorem}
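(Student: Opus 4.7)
The result is a consolidation of the conditional convergence theorems of \cite{Laux:Otto,Laux:Otto:Brakke,Laux:Otto:de:giorgi,Laux:Lelmi} restricted to the two-phase isotropic case, so my plan is to check that, in this setting, the hypotheses of each of those theorems reduce to the assumptions listed here. First I would extract the basic \emph{a priori} estimates from the variational characterization \eqref{variational}. Testing the minimization for $E_{k+1}^h$ with the competitor $E = E_k^h$ gives at once the energy monotonicity $P_{G,h}(E_{k+1}^h) \leq P_{G,h}(E_k^h)$ and the summable dissipation bound
\begin{align*}
\sum_{k=0}^{\lfloor T/h \rfloor} \tfrac{1}{\sqrt{h}} \int (\chi_{E_{k+1}^h} - \chi_{E_k^h}) K_h \ast (\chi_{E_{k+1}^h} - \chi_{E_k^h})\, dx \leq P_{G,h}(E_0).
\end{align*}
Combining the monotonicity with the lower bound from the Elsey--Esedo\u{g}lu $\Gamma$-convergence result yields $P_\sigma(E_h(t)) \leq P_\sigma(E_0)$ in the limit, giving a uniform BV bound in space.

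Next I would turn the dissipation bound into a uniform modulus of continuity of $\chi_{E_h}$ in time with values in a suitable negative Sobolev norm. Coupled with the spatial BV bound, an Aubin--Lions-type argument (as carried out in \cite{Laux:Otto}) produces a subsequence along which $E_h \to E$ in $L^1([0,T]\times \Rd)$ and a.e., which yields the first conclusion.

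For the three weak formulations, the starting point is the Euler--Lagrange equation of the variational step, obtained by inner variations along a smooth vector field $\xi$. The mobility side passes to the limit by the dissipation bound, while the perimeter side is expected to converge to the anisotropic first variation $\int \sigma(\nu)(\mathrm{div}\,\xi - \nu \cdot D\xi\, \nu)\,|\nabla \chi_E|$. Reshetnyak-type lower semicontinuity always gives only $\liminf P_{G,h}(E_h(t)) \geq P_\sigma(E(t))$, so \eqref{eq:convergence condition} is exactly the upgrade from this inequality to an equality that is needed in order to identify the limit of the nonlinear curvature term (and exclude a defect measure). Given this strong convergence, the distributional formulation follows as in \cite{Laux:Otto}, the unit-density Brakke inequality as in \cite{Laux:Otto:Brakke}, and the De~Giorgi optimality condition as in \cite{Laux:Otto:de:giorgi,Laux:Lelmi}.

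The main obstacle, and the reason the convergence is only conditional, is precisely the passage to the limit in the curvature term: without \eqref{eq:convergence condition} one cannot exclude hidden cancellations between the discrete mean curvature and the inner normal, or multiplicity concentrations that would inflate the limit varifold and destroy the unit-density property. In the statement at hand this assumption is simply posited, so the proof reduces to verifying that the arguments of the cited papers go through in the two-phase isotropic setting, which is essentially bookkeeping; the real work of the paper is to later remove the hypothesis in the mean convex case.
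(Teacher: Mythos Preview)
The paper does not give its own proof of this theorem: it is stated in the introduction explicitly as a compilation of the two-phase versions of the conditional convergence results from \cite{Laux:Otto,Laux:Otto:Brakke,Laux:Otto:de:giorgi,Laux:Lelmi}, with no further argument. Your proposal correctly identifies this and sketches the standard route taken in those references (minimizing-movements energy/dissipation estimate, compactness via an Aubin--Lions argument, and passage to the limit in the Euler--Lagrange equation under the energy-convergence hypothesis), which is an accurate summary of how those results are obtained; there is nothing more in the paper to compare against.
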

	
	The main result of this paper, which is stated precisely in Theorem~\ref{energy}, verifies the condition of these convergence results.
	\begin{theorem}
	The convergence of energies \eqref{eq:convergence condition} from Theorem~\ref{Laux} holds if $E_0$ is a finite intersection of bounded, strictly mean convex sets of class $C^{2,\alpha}$ for some $\alpha>0$. Moreover, this convergence of energies result holds for a broad class of not necessarily rotation invariant kernels $K$.
	\end{theorem}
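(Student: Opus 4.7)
The plan is to split the claim into the usual liminf/limsup pair. The liminf direction is essentially immediate: after passing to a subsequence supplied by Theorem~\ref{Laux}, $E_h(t)\to E(t)$ in $L^1(\R^d)$ for a.e.\ $t\in[0,T]$, and the $\Gamma$-convergence $P_{K,h}\xrightarrow{\Gamma} P_{\sigma_K}$ of Elsey--Esedo\u{g}lu delivers $\liminf_{h\downarrow 0} P_{K,h}(E_h(t))\geq P_{\sigma_K}(E(t))$ pointwise. Applying Fatou's lemma to these nonnegative integrands yields the liminf half of the target equality. All of the substance lies in establishing a matching pointwise limsup bound together with its passage to the integral.

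My approach for the limsup side rests on the notion of \emph{outward minimality}, the geometric property singled out in the paper's keywords and the one that is natural in the mean convex setting. First, I would verify that any finite intersection of bounded, strictly mean convex $C^{2,\alpha}$ sets is a strict $P_{\sigma_K}$-outward minimizer: each factor individually inherits a calibration from the foliation of its exterior by the anisotropic mean curvature flow it generates, and strict outward minimality is stable under finite intersections. Second — and this is the technical heart — I would show that the class of $P_{K,h}$-outward minimizers is invariant under a single Esedo\u{g}lu--Otto minimization step \eqref{variational}. Combined with the monotonicity $E_h^{k+1}\subs E_h^{k}$, which for non-negative kernels follows from a comparison principle against half-space competitors, this propagates outward minimality to every time slice $E_h(t)$. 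Since outward minimality is closed under $L^1$-convergence, the limit $E(t)$ is a $P_{\sigma_K}$-outward minimizer as well.

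The payoff is a clean upper bound: for a.e.\ $t$, any slight outward fattening $F_\delta\sups E(t)$ satisfies $E_h(t)\subs F_\delta$ for $h$ small (by the pointwise $L^1$-convergence together with the mild boundary regularity expected for a mean convex limit flow), so outward minimality gives $P_{K,h}(E_h(t))\le P_{K,h}(F_\delta)$. Letting $h\downarrow 0$ first (with $P_{K,h}(F_\delta)\to P_{\sigma_K}(F_\delta)$ by the $\Gamma$-limsup side of Elsey--Esedo\u{g}lu) and then $\delta\downarrow 0$ yields $\limsup_{h\downarrow 0} P_{K,h}(E_h(t))\le P_{\sigma_K}(E(t))$ pointwise in $t$. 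The uniform bound $P_{K,h}(E_h(t))\le P_{K,h}(E_0)\to P_{\sigma_K}(E_0)$, inherited from the monotonicity of the scheme, then lets dominated convergence upgrade the pointwise convergence to convergence of the time integrals.

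The principal obstacle I foresee is the preservation step, i.e.\ showing that $P_{K,h}$-outward minimality survives one application of \eqref{variational}. The naive competitor $F\cup E_h^k$ for $F\sups E_h^{k+1}$ does not plug in cleanly because the mobility term $\int(\chi_{E_h^k}-\chi_E)K_h\ast(\chi_{E_h^k}-\chi_E)\,dx$ penalizes every symmetric difference with $E_h^k$, not only outward growth. My plan would be to exploit the explicit super-level-set representation $E_h^{k+1}=\{K_h\ast\chi_{E_h^k}\ge\tfrac12\}$ together with the non-negativity of $K$, decomposing an arbitrary outward perturbation via a layer-cake/rearrangement argument into a component trapped inside $E_h^k$ and an exterior component whose mobility cost can be absorbed into the nonlocal perimeter gain. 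A further technical nuisance is that the initial boundary is only Lipschitz along the intersection locus of the factors, so the whole argument must be executed at the level of sets of finite perimeter, without appealing to classical normals at the corners.
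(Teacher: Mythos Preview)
Your overall architecture---liminf via $\Gamma$-convergence and Fatou, limsup via outward minimality and comparison with a fixed fattened set---matches the paper's. But there is a genuine gap in the limsup half, and it sits precisely where you appeal to ``pointwise $L^1$-convergence together with mild boundary regularity'' to obtain the containment $E_h(t)\subset F_\delta$. Convergence in $L^1$ does not yield set inclusion: $|E_h(t)\setminus F_\delta|$ could be small but positive for arbitrarily small $h$, and you have no a priori regularity for $E_h(t)$ (indeed its boundary need not even be rectifiable). The paper closes this gap by a route you do not have: it proves that the discrete arrival time functions $u_h$ converge \emph{uniformly}, not merely in $L^1$. This requires the quantitative contraction estimate $\dist(E_{k+1}^h,\partial E_k^h)\ge wh$ for a fixed $w>0$, which in turn rests on two ingredients: a consistency analysis of the \emph{first} step on each smooth factor $A_l$ (Theorem~\ref{anis:main}), and a monotone-speed comparison principle (Theorem~\ref{mon:vel}) showing that once a set contracts by distance $wh$, all subsequent steps contract by at least as much. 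With uniform convergence of $u_h$ in hand, the containment $\{u_h>t\}\subset\{u+\eps>t\}$ is immediate, and the fattened comparison set is simply a level set of the limit arrival time.

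Two further points where the paper's route is simpler than yours. First, you do not need a calibration argument for $E_0$: the paper works with outward minimality \emph{in the container $E_0$}, where $E_0$ itself is trivially outward minimizing since $E_0\cup F=E_0$ for any $F\subset E_0$. Second, the preservation step you flag as the principal obstacle is handled not by rearrangement but by the submodularity $P_K(A\cap B)+P_K(A\cup B)\le P_K(A)+P_K(B)$ (Lemma~\ref{per:prop}), which converts the outward-minimizing inequality into an equivalent inward form (Proposition~\ref{omc:eq}) that survives the thresholding step (Corollary~\ref{ext:omc}).
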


	This is the first result establishing the convergence of energies for the thresholding scheme in the presence of singularities, even in the isotropic case when $K$ is the Gaussian $G$. The class of mean convex surfaces, i.e., $H>0$, arises naturally in mean curvature flow. On the one hand, this property is preserved by the flow \cite[Cor. 3.6]{Huisken}, on the other hand, it allows for interesting singularity formations such as in the famous example of Grayson's dumbbell \cite{Grayson}, where a neck-pinch singularity appears in finite time.
\\

	We compare with the minimizing movements scheme by Almgren, Taylor and Wang \cite{Almgren:Taylor:Wang}, where
	\begin{align*}
	E_{k+1} \in \underset{E \subs \R^d}{\argmin} \left\{ P(E) + \frac{1}{h}\int_{E \triangle E_k} \dist(x,\partial E_k) dx \right\}.
	\end{align*}
	For this scheme a similar conditional convergence result was established by Luckhaus and Sturzenhecker \cite{Luckhaus:Sturzenhecker} and the convergence of energies in the mean convex case was proven by De~Philippis and one of the authors \cite{Laux:Philippis}. 
	The latter result was extended to anisotropic motion by Chambolle and Novaga \cite{Chambolle:Novaga} and to nonlocal perimeters by Cesaroni and Novaga \cite{Cesaroni:Novaga}.
	While, on a conceptual level, the result of this paper plays a similar role as the ones of \cite{Laux:Philippis,Chambolle:Novaga, Cesaroni:Novaga}, the actual mechanism behind these results and ours differ greatly.
	The thresholding scheme is associated with different surface energies and, more importantly, a different movement limiter term, which results in unrelated approximation dynamics.
	In particular, the proofs of \cite{Laux:Philippis, Chambolle:Novaga, Cesaroni:Novaga} heavily rely on the structure of the movement limiter term, in particular the (signed) distance function. 
	In contrast to \cite{Laux:Philippis}, we need to work with quantities that are intrinsic to the scheme, as for example the thresholding energies $\frac1{\sqrt{h}} \int_{E^c} K_h \ast \chi_E$ instead of the perimeter. 
	Another new idea in the present work is the quantification of shrinkage in each time step by a simple geometric comparison principle.
	\\
	
	The rest of this paper is structured as follows.
	In Section~\ref{sec:properties}, we introduce the $K$-outward minimizing condition, which is linked closely to mean convexity. We also provide a useful alternative expression of this condition.
	In Section~\ref{sec:contraction}, we study how sets contract under thresholding. In particular, we exploit the non-negativity of the kernel to show that shrinking sets have monotonically increasing velocity.
	The explicit link between the $K$-outward minimizing condition and contraction under thresholding with $K$ is also established.
	The equivalent expression of the condition is used to study how outward information is maintained by subsequent steps. 
	This allows us to focus on the first time step. 
	Up to this point the assumptions on the kernel are very general and no regularity is needed. 
	In Section~\ref{sec:initial step}, we expand results of Mascarenhas \cite{Mascarenhas} and Evans \cite{Evans} on uniform convergence of motion on smooth surfaces for the Gaussian, and further extend this to anisotropic kernels, following results of Elsey and Esedo\u{g}lu \cite{Elsey:Esedoglu}. After establishing the connection between kernel and approximated motion we extend the initial velocity result to a broad class of kernels, not necessarily rotationally invariant. The proofs of the statements made in this section can be found in Section~\ref{sec:initial step proof}. In Section~\ref{sec:convergence}, we collect information on the approximation and prove the main result of this paper, the convergence of energy.
	Finally, in Section~\ref{sec:kernel construction}, we construct kernels, satisfying all our conditions, that correspond to a large class of motions. This can be made more explicit in two dimensions. We additionally study some fine properties of thresholding.

\section{Properties of the perimeter approximation}
\label{sec:properties}
We first state the precise conditions on the generalized convolution kernel.
\begin{definition}
\label{suit:K}
A function $K$: $\R^{d} \rightarrow \R$ is a \emph{suitable convolution kernel} if it is non-negative, even and integrable, i.e.,
\begin{align*}
K\geq 0, \quad K(x) = K(-x) , \quad
\int_{\R^{d}} K(x) \,dx = 1.
\end{align*} 
\end{definition}
\begin{remark}
Indeed, we can generalize $K$ to be a measure, weakly satisfying the above conditions, and maintain all results where no additional requirements for $K$ are given. Those are Sections~\ref{sec:properties} and~\ref{sec:contraction}, and Lemma~\ref{intersection}.
\end{remark}
Throughout this paper we will work with measurable sets. For a set $D$ we denote the collection of all measurable subsets of $D$ by $\mathcal{M}(D)$ and write $\mathcal{M}=\mathcal{M}(\R^d)$. Suitable convolution kernels being even we note that
\begin{align*}
\int_A K \ast \chi_B \,dx = \int_B K \ast \chi_A \,dx.
\end{align*}
We call this property "reciprocity of impact". To suitable convolution kernels we can associate the $K$-perimeter $P_K$, where for all measurable sets $D \subs \R^d$
\begin{align*}
P_K(D) \coloneqq \int_{D^c} K \ast \chi_D \,dx.
\end{align*}
We define the thresholding operator $T_K$ on $\mathcal{M}$ in the following equivalent ways:
\begin{align}\label{eq:def_thresholding}
T_KE \coloneqq &\{x \in \R^{d} | (K \ast \chi_{E})(x) > 1/2\} \\
= &\{x \in \R^{d} | (K \ast \chi_{E})(x) > (K \ast \chi_{E^c})(x)\}.
\notag
\end{align}
For $E_0 \in \mathcal{M}$, we write $E_1 \coloneqq T_KE_0$. 
We observe that $P_K$ and $T_K$ are invariant under null set perturbations. They are equally defined on equivalence classes. Then the classical thresholding scheme of Merriman, Bence and Osher \cite{MBO} is the special case, where
\begin{align*}
K = G_h = \frac{1}{(4\pi h)^{d/2}}\exp\Big(-\frac{|\cdot |^2}{4h}\Big).
\end{align*}

We verify the variational representation from \eqref{variational}. This is a simple extension of a result by Esedo\u{g}lu and Otto \cite[(5.5)]{Esedoglu:Otto}.
\begin{lemma}
\label{equiv:rep}
Let $K$ be a suitable convolution kernel in the sense of Definition~\ref{suit:K} and $h>0$. Then the set $T_{K_h}E_k$ from \eqref{eq:def_thresholding} is a minimizer of \eqref{variational} (of minimal $\mathcal{L}^d$-measure). 
\end{lemma}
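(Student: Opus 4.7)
The plan is to carry out the same algebraic simplification as in Esedoğlu--Otto and reduce the minimization to a pointwise problem.

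First, I would expand the mobility term by multilinearity and apply reciprocity of impact to the kernel $K_h$ (which remains a suitable convolution kernel up to the scaling):
\begin{align*}
\int_{\R^d}(\chi_{E_k}-\chi_E)\, K_h\ast(\chi_{E_k}-\chi_E)\,dx
= \int \chi_{E_k} K_h\ast\chi_{E_k}\,dx - 2\int \chi_E\, K_h\ast\chi_{E_k}\,dx + \int \chi_E\, K_h\ast\chi_E\,dx.
\end{align*}
For the surface term, I would use $\chi_{E^c}=1-\chi_E$ and the fact that $K_h$ integrates to one to write
\begin{align*}
\int_{E^c} K_h\ast\chi_E\,dx = \int \chi_E\,dx - \int \chi_E\,K_h\ast\chi_E\,dx.
\end{align*}
The two quadratic terms $\pm\int\chi_E K_h\ast\chi_E\,dx$ cancel, so summing both contributions yields
\begin{align*}
\sqrt{h}\,F(E)\;=\;\int\chi_{E_k} K_h\ast\chi_{E_k}\,dx\;+\;\int \chi_E\bigl(1-2 K_h\ast\chi_{E_k}\bigr)\,dx,
\end{align*}
where $F$ denotes the functional in \eqref{variational}. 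The first term is independent of $E$, and the second is minimized pointwise, independently at each $x\in\R^d$, by setting $\chi_E(x)=1$ precisely when $1-2K_h\ast\chi_{E_k}(x)<0$.

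This is exactly the condition $K_h\ast\chi_{E_k}(x)>1/2$ defining $T_{K_h}E_k$ in \eqref{eq:def_thresholding}, so $E=T_{K_h}E_k$ is a minimizer. On the set $\{K_h\ast\chi_{E_k}=1/2\}$ the integrand vanishes, so any choice of $\chi_E$ there gives the same value of $F$; excluding it (as the strict inequality in \eqref{eq:def_thresholding} does) yields the minimizer of smallest Lebesgue measure. To avoid any issues when $|E|=\infty$ or $|E_k|=\infty$, I would phrase the argument in terms of the nonnegative difference
\begin{align*}
\sqrt{h}\bigl(F(E)-F(T_{K_h}E_k)\bigr) = \int(\chi_E-\chi_{T_{K_h}E_k})\bigl(1-2K_h\ast\chi_{E_k}\bigr)\,dx \ge 0,
\end{align*}
where nonnegativity of the integrand follows by checking both sides of the threshold $1/2$.

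There is no real obstacle; the only care needed is to keep track of integrability (handled by working with the above difference) and to invoke reciprocity at the right moment. No regularity or geometric structure of $E_k$ is used, so the lemma holds for every measurable $E_k$ and every suitable kernel $K_h$.
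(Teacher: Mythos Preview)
Your proof is correct and follows essentially the same approach as the paper: both reduce the functional to $\int_E(1-2K_h\ast\chi_{E_k})\,dx$ plus a term independent of $E$, and then minimize pointwise. Your version is slightly more detailed (you spell out the cancellation of the quadratic terms) and adds the difference formulation to sidestep integrability issues, but the underlying argument is the same.
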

\begin{proof}
Using reciprocity of impact and that $(K_h \ast 1)=1$, we obtain
\begin{align*}
&\underset{E \subs \R^d}{\argmin} \left\{ \frac{1}{\sqrt{h}} \int_{E^c} K_h \ast \chi_E \, dx + \frac{1}{\sqrt{h}} \int_{\R^d} (\chi_{E_k}-\chi_E) K_h \ast (\chi_{E_k}-\chi_E) dx \right\}\\
=\, &\underset{E \subs \R^d}{\argmin} \left\{ \frac{1}{\sqrt{h}} \int_E \left( 1 - 2 K_h \ast \chi_{E_k} \right) dx + \frac{1}{\sqrt{h}} \int_{E_k} K_h \ast \chi_{E_k} dx \right\}\\
\ni\, &\{ x\in \R^d | (K_h \ast \chi_{E_k})(x) > 1/2\} = T_{K_h} E_k.
\end{align*}
Indeed, up to null sets, any minimizer contains this set.
\end{proof} 
While this minimizer is unique (up to Lebesgue null sets) for the Gaussian $G$, this does not hold even for regular classes of general kernels. We discuss this in Subsection~\ref{subsec:fine properties} at the end of the paper.\\

We are first interested in the outward minimizing condition.

\begin{definition}
\label{suff}
Let $K$ be a suitable convolution kernel in the sense of Definition~\ref{suit:K}. A set $E \subseteq \Omega \subseteq \R^{d}$ is called \emph{$K$-outward minimizing in $\Om$} if for all $F \subseteq \Om$
\begin{align*}
P_K(E) \leq P_K(E \cup F).
\end{align*}
We call $\Om$ the \emph{container}. $\Om$ is called \emph{sufficiently large} if $\Om \supseteq T_KE \cup E$.
\end{definition}

For the rest of the section we establish an equivalent formulation to the $K$-outward minimizing condition.

\begin{prop}
\label{omc:eq}
For $K$ a suitable convolution kernel, and $E \subseteq \Om \subseteq \R^{d}$ the following are equivalent:
\begin{equation} \label{omc:1}
\text{For all } F \subseteq \Om \text{: } P_K(E) \leq P_K(E \cup F).
\end{equation}
\begin{equation} \label{omc:2}
\text{For all } G \subseteq \Om \text{: } P_K(E \cap G) \leq P_K(G).
\end{equation}
\end{prop}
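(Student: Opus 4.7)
My plan is to prove the two implications separately; the key observation is that one direction is immediate while the other hinges on a submodularity identity for $P_K$.

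The direction \eqref{omc:2} $\Rightarrow$ \eqref{omc:1} is straightforward: given any $F \subs \Om$, we have $E \cup F \subs \Om$ since both $E$ and $F$ lie in $\Om$. Applying \eqref{omc:2} with $G := E \cup F$ gives
\[ P_K(E) = P_K(E \cap (E \cup F)) \leq P_K(E \cup F), \]
which is \eqref{omc:1}.

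For \eqref{omc:1} $\Rightarrow$ \eqref{omc:2}, I first establish the submodularity identity
\[ P_K(A \cup B) + P_K(A \cap B) = P_K(A) + P_K(B) - 2 \int_{A \setm B} K \ast \chi_{B \setm A}\, dx \]
valid for all measurable $A, B \subs \R^d$. This is a routine bookkeeping calculation: decompose $\R^d$ into the four pairwise disjoint pieces $A \setm B$, $B \setm A$, $A \cap B$, $(A \cup B)^c$, expand the characteristic functions appearing in each of the four perimeter integrals, and collect the surviving cross-terms using reciprocity of impact (i.e., the evenness of $K$). Since $K \geq 0$, the final integral is non-negative. Now, given \eqref{omc:1} and any $G \subs \Om$, applying \eqref{omc:1} with $F := G$ yields $P_K(E \cup G) - P_K(E) \geq 0$, and rearranging the identity above with $A := E$, $B := G$ gives
\[ P_K(G) - P_K(E \cap G) = \bigl(P_K(E \cup G) - P_K(E)\bigr) + 2 \int_{E \setm G} K \ast \chi_{G \setm E}\, dx \geq 0, \]
which is \eqref{omc:2}.

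The only non-trivial step is deriving the submodularity identity; aside from that, both implications are one-line consequences. I expect this bookkeeping to be the main (though modest) obstacle. It is worth noting that non-negativity of $K$ is used crucially in the direction \eqref{omc:1} $\Rightarrow$ \eqref{omc:2}, so the equivalence genuinely reflects the \emph{suitable convolution kernel} assumption of Definition~\ref{suit:K}.
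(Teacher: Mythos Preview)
Your proof is correct. The direction \eqref{omc:2} $\Rightarrow$ \eqref{omc:1} is identical to the paper's. For the reverse direction, both arguments rest on submodularity of $P_K$, but the routes differ: the paper proves only the inequality $P_K(A \cap B) + P_K(A \cup B) \leq P_K(A) + P_K(B)$ (Lemma~\ref{per:prop}), deriving it from the representation $P_K(E) = \tfrac{1}{2}\int K(z)\int |\chi_E(x)-\chi_E(x-z)|\,dx\,dz$ of Lemma~\ref{alt:per} together with the pointwise min/max inequality \eqref{eq:6}. You instead obtain the sharper \emph{identity}
\[
P_K(A)+P_K(B)-P_K(A\cup B)-P_K(A\cap B)=2\int_{A\setm B}K\ast\chi_{B\setm A}\,dx
\]
by direct bookkeeping on the four disjoint pieces $A\setm B$, $B\setm A$, $A\cap B$, $(A\cup B)^c$. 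Your route is more elementary and more informative, since it names the defect term explicitly; the paper's route via Lemma~\ref{alt:per} packages $P_K$ in a form that is reused later in the paper. In both approaches, non-negativity of $K$ is exactly what converts the identity into the needed inequality, so your closing remark is on point.
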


To prove this, we extend a result from the proof of Lemma A.3 in \cite{Esedoglu:Otto} to our class of kernels. 
\begin{lemma}
\label{alt:per}
For $K$ a suitable convolution kernel and $E \subs \R^d$ it holds that
\begin{equation}
\label{eq:3}
P_K(E) = \frac{1}{2} \int_{\Rd} K(z) \int_{\Rd} |\chi_E(x) - \chi_E(x-z)| dx dz.
\end{equation}
\end{lemma}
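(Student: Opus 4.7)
The plan is to expand both sides, recognize them as double integrals over $\R^d \times \R^d$, and use the evenness of $K$ to identify them. The starting observation is that for any $x, y$, the function $|\chi_E(x) - \chi_E(y)|$ splits as $\chi_E(x)\chi_{E^c}(y) + \chi_{E^c}(x)\chi_E(y)$, so the right-hand side of \eqref{eq:3} can be written, via Fubini (justified by non-negativity and $K \in L^1$), as
\begin{align*}
\frac{1}{2} \int_{\R^d} K(z) \int_{\R^d} \chi_{E^c}(x) \chi_E(x-z) \, dx \, dz
+ \frac{1}{2} \int_{\R^d} K(z) \int_{\R^d} \chi_E(x) \chi_{E^c}(x-z) \, dx \, dz.
\end{align*}

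Next I would show that the two terms are equal. In the second term, I substitute $y = x - z$, $dy = dx$, which gives $\int \chi_E(y+z) \chi_{E^c}(y) \, dy$. Then I substitute $z \mapsto -z$ and use $K(-z) = K(z)$, turning the second term into $\frac{1}{2} \int K(z) \int \chi_{E^c}(y) \chi_E(y-z) \, dy \, dz$, which matches the first term.

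Finally, I identify this common integral with $P_K(E)$: unfolding the definition,
\begin{align*}
P_K(E) = \int_{E^c} (K \ast \chi_E)(x) \, dx = \int_{\R^d} \chi_{E^c}(x) \int_{\R^d} K(z) \chi_E(x-z) \, dz \, dx,
\end{align*}
and another application of Fubini gives exactly $\int K(z) \int \chi_{E^c}(x) \chi_E(x-z) \, dx \, dz$, completing the identity.

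There is no real obstacle here: the only subtle point is keeping track of the variable substitution and invoking $K$'s evenness at the right moment. No regularity of $E$ or $K$ beyond Definition~\ref{suit:K} is needed, and the argument works identically if $K$ is a non-negative even finite measure with total mass one, as mentioned in the remark following Definition~\ref{suit:K}.
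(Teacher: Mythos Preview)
Your proof is correct and essentially the same as the paper's: both hinge on the identity $|\chi_E(x)-\chi_E(x-z)| = \chi_{E^c}(x)\chi_E(x-z) + \chi_E(x)\chi_{E^c}(x-z)$ and the evenness of $K$ to equate the two cross terms. The only cosmetic difference is direction: the paper starts from $P_K(E)$, duplicates it, and applies ``reciprocity of impact'' (their name for exactly your substitution $z\mapsto -z$) to one copy, while you start from the right-hand side and collapse the two terms into $P_K(E)$.
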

\begin{proof}
By definition of the perimeter approximation
\begin{align*}
P_K(E) &= \int_{E^c} (K \ast \chi_E)(x) dx \\
&= \frac{1}{2} \int_{\Rd} (1-\chi_E)(x) (K \ast \chi_E)(x) + (1-\chi_E)(x) (K \ast \chi_E)(x) dx.
\end{align*}
We use reciprocity of impact on the second term
\begin{align*}
\frac{1}{2} \int_{\Rd} (1-\chi_E)(x) (K \ast \chi_E)(x) + \chi_E(x) (K \ast (1-\chi_E))(x) dx.
\end{align*}
Writing out the convolution, we use Fubini and pull the convolution kernel out of the inner integral
\begin{align*}
\frac{1}{2} \int_{\Rd} K(z) \int_{\Rd} (1-\chi_E)(x)\chi_E(x-z) + \chi_E(x)(1-\chi_E)(x-z) dx\, dz.
\end{align*}
Remembering characteristic functions to be invariant under squaring, we regard the inner integral
\begin{align*}
&(1-\chi_E)(x)\chi_E(x-z) + \chi_E(x)(1-\chi_E)(x-z) \\
= \,&(\chi_E(x))^2 - 2\chi_E(x)\chi_E(x-z) + (\chi_E(x-z))^2 \\
= \,&|\chi_E(x) - \chi_E(x-z)|^2 \\
= \,&|\chi_E(x) - \chi_E(x-z)|.
\end{align*}
This shows \eqref{eq:3}.
\end{proof}

The previous result is applied to prove the central submodularity, which is known for the classical perimeter, to our $K$-perimeter.
\begin{lemma}
\label{per:prop}
For $A, B \subseteq \R^{d}$ and $K$ a suitable convolution kernel
\begin{align*}
P_K(A \cap B) + P_K(A \cup B) \leq P_K(A) + P_K(B).
\end{align*}
\end{lemma}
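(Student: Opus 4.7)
The plan is to reduce the submodularity of $P_K$ to a pointwise inequality among characteristic functions, using the double-integral representation just established in Lemma~\ref{alt:per}.

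First, I would apply Lemma~\ref{alt:per} to each of the four terms $P_K(A \cap B)$, $P_K(A \cup B)$, $P_K(A)$, $P_K(B)$. Since $K \geq 0$, the claimed inequality reduces to proving the pointwise bound
\begin{align*}
|\chi_{A\cap B}(x) - \chi_{A\cap B}(y)| &+ |\chi_{A\cup B}(x) - \chi_{A\cup B}(y)| \\
&\leq |\chi_A(x) - \chi_A(y)| + |\chi_B(x) - \chi_B(y)|
\end{align*}
for all $x, y \in \R^d$ (with $y = x - z$), and then integrating against $K(z)\, dx\, dz$.

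Second, since $\chi_{A\cap B} = \min(\chi_A, \chi_B)$ and $\chi_{A\cup B} = \max(\chi_A, \chi_B)$ pointwise, setting $a = \chi_A(x), b = \chi_B(x), a' = \chi_A(y), b' = \chi_B(y)$ reduces the question to the elementary inequality
\begin{align*}
|\min(a,b) - \min(a',b')| + |\max(a,b) - \max(a',b')| \leq |a-a'| + |b-b'|.
\end{align*}
This is a standard fact about the lattice operations on $\R$, which I would verify by a short case analysis: WLOG $a \leq b$. If also $a' \leq b'$, both sides coincide; otherwise $a' > b'$ and the left-hand side equals $|a-b'| + |b-a'|$, which is bounded by $|a-a'| + |b-b'|$ via the rearrangement inequality for sorted pairs.

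The conceptual input is Lemma~\ref{alt:per}; everything else is routine, and I do not foresee a real obstacle. The only mildly delicate point is the four-variable inequality, and even that is an elementary consequence of the lattice structure of $\R$. It is worth noting that the case analysis reveals strict inequality precisely when the symmetric differences $A \setminus B$ and $B \setminus A$ exchange values across $x$ and $y$, which is consistent with the classical submodularity of perimeter.
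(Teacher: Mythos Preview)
Your proposal is correct and follows essentially the same approach as the paper: both reduce via Lemma~\ref{alt:per} and the identities $\chi_{A\cap B}=\min(\chi_A,\chi_B)$, $\chi_{A\cup B}=\max(\chi_A,\chi_B)$ to the same elementary four-variable inequality, and then verify it. The only cosmetic difference is in that verification: the paper relabels $(a,a',b,b')$ in increasing order and compares the three possible pairings on the number line, whereas you do a direct case split on the relative order of $a',b'$; both are routine.
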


\begin{proof}
By Lemma~\ref{alt:per}, the claim is equivalent to
\begin{align}
\int_{\Rd} K(z) &\int_{\Rd}  \big(|\chi_{E \cap F}(x) - \chi_{E \cap F}(x-z)| + |\chi_{E \cup F}(x) - \chi_{E \cup F}(x-z)| \big) dx\, dz  \notag \\
\leq &\int_{\Rd} K(z) \int_{\Rd} \big(|\chi_{E}(x) - \chi_{E}(x-z)| + |\chi_{F}(x) - \chi_{F}(x-z)| \big) dx\, dz.
\end{align}
This inequality follows from the elementary inequality for arbitrary real numbers $a,a',b,b'$:
\begin{equation}
\label{eq:6}
|\min(a,b) - \min(a',b')| + |\max(a,b) - \max(a',b')| \leq |a-a'| + |b-b'|.
\end{equation}
Indeed, setting $a = \chi_E(x)$, $a' = \chi_E(x-z)$, $b = \chi_F(x)$ and $b' = \chi_F(x-z)$ and using the non-negativity of $K$ we obtain the claim.\\

We prove the pointwise inequality \eqref{eq:6} as a one dimensional geometric problem: 
Regard $(A,B,C,D)$, a relabeling of $(a,a',b,b')$ according to order, arbitrary in case of equality. Both sides of \eqref{eq:6} are sums of distances in disjoint pairings of $A,B,C,D$. There are only three such pairings:
\begin{align*}
	(AB,CD), (AC,BD), (AD,BC).
\end{align*}
Looking at our number line, we can see that
\begin{align*}
	|AD| + |BC| = |AC| + |BD| = |AB| + |CD| + 2|BC| \geq |AB| + |CD|.
\end{align*}
Now, let us assume the statement \eqref{eq:6} to be wrong:
\begin{align*}
	&|\min(a,b) - \min(a',b')| + |\max(a,b) - \max(a',b')| > |a - a'| + |b - b'|.
\end{align*}
We then can associate $|a-a'| + |b-b'|$ only with $|AB| + |CD|$, as all other sums of distances are maximal. Thus
\begin{align*}
	\{a,a'\} =\{ A,B\} \text{ and } \{ b,b'\} =\{ C,D\}
\end{align*}
or
\begin{align*}
 	\{a,a'\} =\{ C,D\} \text{ and } \{ b,b'\} =\{ A,B\}.
\end{align*}
W.l.o.g.\ we assume the first case. But then, the distance of minima is between $a$ and $a'$, that of maxima between $b$ and $b'$. This implies
\begin{align*}
	|\min(a,b) - \min(a',b')| + |\max(a,b) - \max(a',b')| &= |AB| + |CD|\\
	&= |a-a'| + |b-b'|,
\end{align*}
contradicting the assumption.
\end{proof}

\begin{proof}[Proof of Proposition~\ref{omc:eq}]
To prove that \eqref{omc:2} implies \eqref{omc:1}, we set $G = E \cup F$ in \eqref{omc:2}. For the inverse implication, let $G \subs \Om$. By \eqref{omc:1} we immediately obtain
\begin{align*}
P_K(E \cap G) + P_K(E) \leq P_K(E \cap G) + P_K(E \cup G)
\end{align*}
and by Lemma~\ref{per:prop}
\begin{align*}
P_K(E \cap G) + P_K(E \cup G) \leq P_K(G) + P_K(E).
\end{align*}
Thus
\begin{align*}
P_K(E \cap G) + P_K(E) \leq P_K(G) + P_K(E)
\end{align*}
and by subtracting $P_K(E)$ we conclude the proof.
\end{proof}
Noting the trivial connection between $F$ and $G$, we can extend this result slightly.
\begin{corollary}\label{cor:per}
For $E \subseteq \Om \subseteq \R^{d}$, $K$ a suitable convolution kernel and $\mathcal{A}$ an operator on the subsets of $\Om$. Then the following statements are equivalent:
\begin{align}
\label{eq:11} &\text{For all } F \subseteq \Om \text{: } P_K(E) + \mathcal{A}(F \setminus E) \leq P_K(E \cup F) \\
\label{eq:12} &\text{For all } G \subseteq \Om \text{: } P_K(E \cap G) + \mathcal{A}(G \setminus E) \leq P_K(G)
\end{align}
\end{corollary}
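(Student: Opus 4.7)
The plan is to mimic the proof of Proposition~\ref{omc:eq}, exploiting the fact that the map $F \mapsto G := E \cup F$ (with inverse $G \mapsto F := G \setminus E$) is, for our purposes, a bijection on subsets of $\Om$ under which the expression $F \setminus E$ is invariant: indeed $(E \cup F) \setminus E = F \setminus E$, and $G \setminus E$ is of course $G \setminus E$. This means that the additional $\mathcal{A}$-term in \eqref{eq:11} and \eqref{eq:12} matches automatically under this change of variables, and the argument then reduces to the one of Proposition~\ref{omc:eq} supplemented by a single application of submodularity (Lemma~\ref{per:prop}).

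For the direction \eqref{eq:12} $\Rightarrow$ \eqref{eq:11}, I would fix $F \subs \Om$ and plug $G := E \cup F$ into \eqref{eq:12}. Then $E \cap G = E$ and $G \setminus E = F \setminus E$, so \eqref{eq:12} becomes exactly $P_K(E) + \mathcal{A}(F \setminus E) \leq P_K(E \cup F)$, which is \eqref{eq:11}. This step uses no submodularity at all; it is pure set-theoretic bookkeeping.

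For the converse \eqref{eq:11} $\Rightarrow$ \eqref{eq:12}, fix $G \subs \Om$ and apply \eqref{eq:11} with $F := G$, giving
\[
P_K(E) + \mathcal{A}(G \setminus E) \leq P_K(E \cup G).
\]
Lemma~\ref{per:prop} yields submodularity $P_K(E \cup G) + P_K(E \cap G) \leq P_K(E) + P_K(G)$, so
\[
P_K(E) + \mathcal{A}(G \setminus E) \leq P_K(E) + P_K(G) - P_K(E \cap G),
\]
and cancelling $P_K(E)$ gives \eqref{eq:12}.

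There is no real obstacle here beyond making sure the $\mathcal{A}$-terms survive the change of variables, which they do because $(E\cup F)\setminus E = F\setminus E$. All of the geometric content is already packaged into the submodularity of $P_K$ proved in Lemma~\ref{per:prop}, so the corollary is genuinely a cosmetic extension of Proposition~\ref{omc:eq}.
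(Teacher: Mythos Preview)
Your proof is correct and essentially identical to the paper's: both directions use the same substitutions ($G = E \cup F$ for \eqref{eq:12}$\Rightarrow$\eqref{eq:11}, and $F = G$ followed by submodularity from Lemma~\ref{per:prop} for the converse). The only cosmetic difference is that the paper adds $P_K(E \cap G)$ to both sides before invoking submodularity, whereas you rearrange the submodularity inequality first---the resulting chain of inequalities is the same.
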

\begin{proof}
This proof is fully analogous to the last. To prove that \eqref{eq:12} implies \eqref{eq:11} we set $G = E \cup F$ in \eqref{eq:12}. For the inverse implication, let $G \subs \Om$. By \eqref{eq:11} we immediately obtain
\begin{align*}
P_K(E \cap G) + P_K(E) + \mathcal{A}(G \setminus E) \leq P_K(E \cap G) + P_K(E \cup G)
\end{align*}
and by Lemma~\ref{per:prop}:
\begin{align*}
P_K(E \cap G) + P_K(E \cup G) \leq P_K(G) + P_K(E).
\end{align*}
Thus
\begin{align*}
P_K(E \cap G) + P_K(E) + \mathcal{A}(G \setminus E) \leq P_K(G) + P_K(E)
\end{align*}
and by subtracting $P_K(E)$ we conclude the proof.
\end{proof}

\section{Contraction under thresholding}
\label{sec:contraction}

In this section, we investigate the contraction behavior of thresholding. 
More precisely, we show in Subsection~\ref{subsec:monotone speed} that the speed of contracting sets is non-decreasing. 
In Subsection~\ref{subsec:outward min contract}, we show the equivalence of contraction under thresholding and an almost outward minimality condition.
To denote the sequence of sets obtained by repeated application of thresholding we write $E_1 \coloneqq T_KE_0$ and $E_j \coloneqq T_K^j E_0 = T_K(\dots(T_KE_0)\dots)$.

\subsection{Monotone speed}
\label{subsec:monotone speed}

The following theorem states that sets which contract under thresholding do so with monotonically increasing speed. 

\begin{theorem}
	\label{mon:vel}
	Let $K$ be a suitable convolution kernel in the sense of Definition~\ref{suit:K}, $E_0 \subs \R^{d}$ be measurable and write $E_j = T_K^jE_0$. If $E_1 \subs E_0$, then $E_2 = T_KE_1 \subs E_1$ and
	\begin{align*}
	\dist(E_2, \partial E_1) \geq \dist(E_1,\partial E_0).
	\end{align*}
\end{theorem}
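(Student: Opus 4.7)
The plan is as follows. The inclusion $E_2 \subs E_1$ follows at once from non-negativity of $K$: since $E_1 \subs E_0$, we have $K \ast \chi_{E_1} \leq K \ast \chi_{E_0}$ pointwise, hence the super-level sets satisfy
\begin{align*}
E_2 = \{K \ast \chi_{E_1} > 1/2\} \subs \{K \ast \chi_{E_0} > 1/2\} = E_1.
\end{align*}

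For the distance estimate, set $r := \dist(E_1, \partial E_0)$ and assume $r > 0$ (otherwise nothing is to prove). The geometric heart of the argument is the inclusion $E_1 + B_r \subs \mathrm{int}(E_0)$: each $y \in E_1$ lies at distance at least $r$ from $\partial E_0$, so the open ball $B_r(y)$ is disjoint from $\partial E_0$ and therefore, being connected, lies entirely in one of the disjoint open sets $\mathrm{int}(E_0)$ and $\mathrm{int}(E_0^c)$; since $y \in E_0 \setminus \partial E_0 = \mathrm{int}(E_0)$, the ball must lie in $\mathrm{int}(E_0)$. Equivalently, for every fixed $z$ with $|z| < r$ one has the pointwise bound $\chi_{E_1}(\cdot) \leq \chi_{E_0}(\cdot + z)$.

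Convolving this inequality with the non-negative kernel $K$ at a point $x \in E_2$ and using translation invariance of Lebesgue measure gives, for any $|z| < r$,
\begin{align*}
K \ast \chi_{E_0}(x+z) = \int K(x-y)\, \chi_{E_0}(y+z)\, dy \geq \int K(x-y)\, \chi_{E_1}(y)\, dy = K \ast \chi_{E_1}(x) > \tfrac{1}{2},
\end{align*}
so $x + z \in T_K E_0 = E_1$. Since $z$ was arbitrary in the open ball of radius $r$, we obtain $B_r(x) \subs E_1$; as $B_r(x)$ is open, in fact $B_r(x) \subs \mathrm{int}(E_1)$, hence $\dist(x, \partial E_1) \geq r$. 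Taking the infimum over $x \in E_2$ completes the proof. I foresee no substantial obstacle; the only bookkeeping point is to fix once and for all the pointwise representatives $E_j := \{K \ast \chi_{E_{j-1}} > 1/2\}$, so that the topological boundaries and distance functions appearing in the statement are unambiguously defined.
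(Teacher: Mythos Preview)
Your proof is correct and follows essentially the same shift-comparison argument as the paper: both hinge on the observation that $E_1 + s \subs E_0$ for every $|s| < r$, then use non-negativity of $K$ to transfer this to the convolutions and conclude $x + s \in E_1$ for every $x \in E_2$. You add a bit more topological bookkeeping (the connectedness argument for $B_r(y) \subs \mathrm{int}(E_0)$ and the explicit separation of the inclusion $E_2 \subs E_1$), but the substance is the same.
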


\begin{proof}
	We use a set comparison argument. For a visual representation see Figure~\ref{fig:comparison}. If $E_1 \subs E_0$ and $\dist(E_1,\partial E_0) = r$, then for all $s \in B_r(0)$ we have $(E_1 + s) \subs E_0$. Thus for all $x \in \R^{d}$
	\begin{align*}
	\chi_{E_1}(x-s) \leq \chi_{E_0}(x).
	\end{align*}
	Since $K$ is non-negative this implies
	\begin{align*}
	(K \ast \chi_{E_1})(x-s) \leq (K \ast \chi_{E_0})(x).
	\end{align*}
	Hence, by definition of thresholding, if $x \in E_2$, then 
	\begin{equation}
	\label{eq:shifted comparison}
	\frac{1}{2} < (K \ast \chi_{E_1})(x) \leq (K \ast \chi_{E_0})(x+s).
	\end{equation}
	But then $(x+s) \in E_1$. Since $s \in B_r(0)$ was arbitrary, this means that the distance between $E_2$ and $\partial E_1$ is larger or equal to $r = \dist(E_1, \partial E_0)$.
\end{proof}

\begin{remark}
Note that the simple proof of Theorem~\ref{mon:vel} did not use the symmetry $K(-x)=K(x)$. Furthermore, replacing the value $1/2$ in \eqref{eq:shifted comparison} by $1/2 - c$, the result also extends to thresholding with constant drift $c \in \R$, in which $T_KE$ is replaced by $\{x \in \R^d | (K \ast \chi_E)(x) > 1/2 - c\}$.
\end{remark} 

%Monotonical movement proof
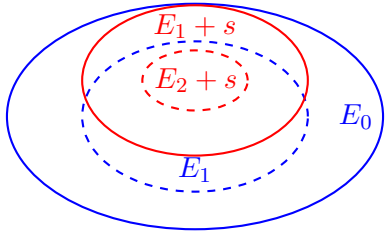
\begin{figure}
	\centering
	\begin{tikzpicture}
	
	\draw [blue, thick] (0,0) ellipse (2.5 and 1.5);
	
	\node [blue, left=0pt of {(2.5,0)}] {$E_0$};
	
	\draw [blue, dashed, thick] (0,0) ellipse (1.5 and 1);
	
	\node [blue, above=0pt of {(0,-1)}] {$E_1$};
	
	\draw [red, thick] (0,.48) ellipse (1.5 and 1);
	
	\node [red, below=0pt of {(0,1.5)}] {$E_1+s$};
	
	\draw [red, dashed, thick] (0,.48) ellipse (0.7 and 0.4);
	
	\node [red, left=0pt of {(0.7,.48)}] {$E_2+s$};
	
	\end{tikzpicture}
	\caption{Proof by comparison with shifted successor}
	\label{fig:comparison}
\end{figure}

\subsection{Outward minimizing sets contract}
\label{subsec:outward min contract}

This section connects contraction of a set under a thresholding step with $K$ to the $K$-outward minimizing condition. The basis of this is the following lemma.  The proof being a simple calculation, we can carry along an arbitrary functional $\Ac$.
\begin{lemma}\label{main:con}
For $E, F \subseteq \R^{d}$ disjoint with $P_K(E)<\infty$, $K$ a suitable convolution kernel in the sense of Definition~\ref{suit:K} and $\Ac = \Ac(E,F)$ an arbitrary functional the following statements are equivalent:
\begin{align}
\label{eq:7} P_K(E \cup F) + \int_F K \ast \chi_F \,dx &\geq P_K(E) + \Ac(E,F),\\
\label{eq:8} \int_{F} K \ast \chi_{E^c} \,dx &\geq \int_F K \ast \chi_E \,dx + \Ac(E,F).
\end{align}
\end{lemma}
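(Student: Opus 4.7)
The plan is to show that the two inequalities differ only in notation, i.e., that both sides of \eqref{eq:7} minus both sides of \eqref{eq:8} agree. Concretely, I will establish the identity
\[
P_K(E \cup F) + \int_F K \ast \chi_F \, dx - P_K(E) = \int_F K \ast \chi_{E^c} \, dx - \int_F K \ast \chi_E \, dx,
\]
after which adding $\Ac(E,F)$ and rearranging converts \eqref{eq:7} into \eqref{eq:8}.

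First I would expand the left-hand side. Since $E \cap F = \emptyset$, we have $\chi_{E \cup F} = \chi_E + \chi_F$ and $(E \cup F)^c = E^c \setm F$, so
\[
P_K(E \cup F) = \int_{E^c \setm F} K \ast \chi_E \, dx + \int_{E^c \setm F} K \ast \chi_F \, dx.
\]
The first summand equals $P_K(E) - \int_F K \ast \chi_E \, dx$ (using once more $F \subs E^c$), while the second equals $\int_{E^c} K \ast \chi_F \, dx - \int_F K \ast \chi_F \, dx$.

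Next I would apply reciprocity of impact to rewrite $\int_{E^c} K \ast \chi_F \, dx = \int_F K \ast \chi_{E^c} \, dx$. Substituting and moving $\int_F K \ast \chi_F \, dx$ to the left-hand side yields precisely the stated identity. At this point the equivalence of \eqref{eq:7} and \eqref{eq:8} is immediate: both assert that this common quantity is bounded below by $\Ac(E,F)$.

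There is no real obstacle here; the only tools are the disjointness decomposition and reciprocity of impact (which in turn relies only on evenness of $K$, available by Definition~\ref{suit:K}). The finiteness assumption $P_K(E) < \infty$ is used solely to ensure that the rearrangements are legitimate (no $\infty - \infty$), and note that $K \ast 1 = 1$ is not needed, so the argument extends verbatim to the measure-valued kernels mentioned in the remark following Definition~\ref{suit:K}.
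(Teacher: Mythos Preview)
Your proof is correct and follows essentially the same route as the paper's: both arguments use the disjointness decomposition $\chi_{E\cup F}=\chi_E+\chi_F$, $(E\cup F)^c=E^c\setminus F$, subtract the common term $\int_{E^c\setminus F}K\ast\chi_E\,dx$ (which is where $P_K(E)<\infty$ enters), and apply reciprocity of impact to obtain the identity; the only cosmetic difference is that you state the identity first and then read off the equivalence, whereas the paper runs through the chain of equivalent inequalities directly.
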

\begin{remark}
We call the second term on the left-hand side of \eqref{eq:7} the self-interaction of $F$ and denote it by
\begin{align*}
\SK_K(F) \coloneqq \int_F K \ast \chi_F \,dx.
\end{align*}
This term will appear several times in this section. It has first variation $0$, but for any fixed $F$
\begin{align*}
\mathcal{S}_{\frac{1}{\sqrt{h}}K_h}(F) \rightarrow \infty \text{ for } h \rightarrow 0.
\end{align*}
That we can weaken the outward minimizing condition by this term points to thresholding depending on the surface structure in a local sense, despite the convolution step's dependency on the global structure of the set.
\end{remark}
\begin{proof}[Proof of Lemma \ref{main:con}]
Let
\begin{align*}
P_K(E \cup F) + \SK_K(F) \geq P_K(E) + \Ac(E,F).
\end{align*}
By definition of the $K$-perimeter and the self-interaction this is equivalent to
\begin{align*}
\int_{(E \cup F)^c} K \ast \chi_{E \cup F} \,dx + \int_F K \ast \chi_F \,dx \geq \int_{E^c} K \ast \chi_E \,dx + \Ac(E,F).
\end{align*}
With $E$ and $F$ being disjoint, we can split the terms up
\begin{align*}
&\int_{(E \cup F)^c} K \ast \chi_{E} \,dx + \int_{(E \cup F)^c} K \ast \chi_F \,dx + \int_F K \ast \chi_F \,dx\\
&\geq \int_{(E \cup F)^c} K \ast \chi_E \,dx + \int_F K \ast \chi_E \,dx + \Ac(E,F)
\end{align*}
and subtract the first summand (using $P_K(E)<\infty$)
\begin{align*}
\int_{E^c} K \ast \chi_F \,dx \geq \int_F K \ast \chi_E \,dx + \Ac(E,F).
\end{align*}
Using reciprocity of impact, we conclude
\begin{align*}
\int_F K \ast \chi_{E^c} \,dx \geq \int_F K \ast \chi_E \,dx + \Ac(E,F). & \qedhere
\end{align*}
\end{proof}
From Lemma~\ref{main:con} we immediately gain the explicit connection between the $K$-outward minimizing condition and contraction under thresholding. %From now on, we will write $E_1\coloneqq T_KE_0$ and $E_j \coloneqq T^j_KE_0 = T_K(\dots (T_KE_0))$.

\begin{theorem}
\label{contraction}
Let $E_0 \subseteq \Omega \subseteq \R^{d}$ with $P_K(E_0)<\infty$, $K$ be a suitable convolution kernel in the sense of Definition~\ref{suit:K} and $\Omega$ be sufficiently large in the sense of Definition~\ref{suff}. Then $|E_1 \setminus E_0|=0$ if and only if for all $F \subseteq \Om \setminus E_0$
\begin{align*}
P_K(E_0 \cup F) + \SK_K(F) \geq P_K(E_0).
\end{align*}
\end{theorem}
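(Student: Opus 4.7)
The plan is to reduce the statement to a pointwise comparison of convolutions by applying Lemma~\ref{main:con}, and then use the second (equivalent) definition of $T_K$ from \eqref{eq:def_thresholding}. Specifically, I would take $\mathcal{A} \equiv 0$ in Lemma~\ref{main:con} with $E = E_0$ and arbitrary disjoint $F \subseteq \Omega \setminus E_0$; since $P_K(E_0)<\infty$, the lemma applies and shows that the outward-minimizing type inequality
\begin{align*}
P_K(E_0 \cup F) + \SK_K(F) \geq P_K(E_0) \quad \text{for all } F \subseteq \Omega \setminus E_0
\end{align*}
is equivalent to
\begin{align*}
\int_F K \ast \chi_{E_0^c}\,dx \geq \int_F K \ast \chi_{E_0}\,dx \quad \text{for all } F \subseteq \Omega \setminus E_0.
\end{align*}

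For the forward direction ($|E_1 \setminus E_0|=0$ implies the minimizing inequality), I would use the second form of the thresholding operator, $T_K E_0 = \{x : (K \ast \chi_{E_0})(x) > (K \ast \chi_{E_0^c})(x)\}$. The assumption $|E_1 \setminus E_0|=0$ means that, up to a null set, the strict inequality $K \ast \chi_{E_0} > K \ast \chi_{E_0^c}$ fails everywhere on $E_0^c$; equivalently, $K \ast \chi_{E_0} \leq K \ast \chi_{E_0^c}$ almost everywhere on $E_0^c$. Any admissible $F \subseteq \Omega \setminus E_0$ lies inside $E_0^c$, so integrating the pointwise inequality over $F$ yields the reformulated condition.

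For the converse, I would argue by contraposition: choose the specific test set $F \coloneqq E_1 \setminus E_0$. Sufficient largeness of $\Omega$ (i.e., $\Omega \supseteq T_K E_0 \cup E_0 = E_1 \cup E_0$) guarantees $F \subseteq \Omega \setminus E_0$, so $F$ is admissible. By the very definition of $T_K$, the strict inequality $K \ast \chi_{E_0} > K \ast \chi_{E_0^c}$ holds pointwise on $F$. If $|F| > 0$, integrating this strict inequality over $F$ produces $\int_F K \ast \chi_{E_0}\,dx > \int_F K \ast \chi_{E_0^c}\,dx$, contradicting the reformulated minimizing condition; hence $|E_1 \setminus E_0| = |F| = 0$.

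There is no real obstacle here: the proof is essentially a matter of carefully assembling three ingredients that are already in place, namely Lemma~\ref{main:con} (to trade the perimeter-type inequality for an integral comparison of $K \ast \chi_{E_0}$ and $K \ast \chi_{E_0^c}$), the second representation of $T_K$ in \eqref{eq:def_thresholding} (to identify $E_1 \setminus E_0$ with the level set on which the reverse comparison holds strictly), and the sufficient-largeness hypothesis on $\Omega$ (to make $E_1 \setminus E_0$ itself a legitimate test set in the converse direction). The only subtle point worth flagging explicitly is this last one, which is precisely why the container condition $\Omega \supseteq T_K E_0 \cup E_0$ appears in the hypotheses.
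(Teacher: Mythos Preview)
Your proposal is correct and follows essentially the same approach as the paper: apply Lemma~\ref{main:con} with $\mathcal{A}=0$ to convert the perimeter-type inequality into the integral comparison $\int_F K\ast\chi_{E_0^c}\,dx \geq \int_F K\ast\chi_{E_0}\,dx$, and then identify this (via the second form of $T_K$ in \eqref{eq:def_thresholding}) with the condition $|E_1\setminus E_0|=0$. The paper compresses your two directions into a single chain of equivalences (integral inequality for all $F$ $\Leftrightarrow$ pointwise inequality a.e.\ on $\Omega\setminus E_0$ $\Leftrightarrow$ $|E_1\setminus E_0|=0$), but the content is the same, including the use of sufficient largeness of $\Omega$ to make $E_1\setminus E_0$ an admissible test set.
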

\begin{proof}
The second statement is by the previous Lemma~\ref{main:con} equivalent to 
\begin{align*}
\int_F K \ast \chi_{E_0^c} \,dx \geq \int_F K \ast \chi_{E_0} \,dx \text{, for all } F \subs \Om \setm E_0.
\end{align*}
This is equivalent to
\begin{align*}
(K \ast \chi_{E_0^c})(x) \geq (K \ast \chi_{E_0})(x) \text{, for almost all } x \in \Om \setm E_0.
\end{align*}
Since $E_1 = \{x \in \Om | (K \ast \chi_{E_0^c})(x) < (K \ast \chi_{E_0})(x)\}$, this is equivalent to $|E_1 \setminus E_0|=0$.
\end{proof}

\begin{definition}
If $E_0$, $K$ and $\Om$ satisfy the conditions and equivalent statements of Theorem~\ref{contraction}, we call $E_0$ \emph{locally $K$-outward minimizing in $\Om$}.
\end{definition}

We now show that $E_1$ inherits local $K$-outward minimality from $E_0$. 
\begin{theorem}
\label{per:contr}
Let $E_0$ be locally $K$-outward minimizing in $\Om$, then, for all $G \subseteq \Om \setminus E_1$
% $E_0 \subseteq \Om \subseteq \R^{d}$ with $\Om$ sufficiently large, $K$ a suitable convolution kernel. If for all $F \subseteq \Om \setminus E_0$
%\begin{align*}
%P_K(E_0 \cup F) + \SK_K(F) \geq P_K(E_0)
%\end{align*}
\begin{align*}
P_K(E_1 \cup G) + \SK_K(G) \geq P_K(E_1) + 2\int_G K \ast \chi_{E_0 \setminus E_1} dx.
\end{align*}
In particular, $E_1$ is also locally $K$-outward minimizing in $\Om$.
\end{theorem}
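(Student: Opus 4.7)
The plan is to reduce the claim, via Lemma~\ref{main:con}, to a pointwise inequality between the convolutions $K \ast \chi_{E_0}$ and $K \ast \chi_{E_0^c}$ on $\Om \setm E_1$, and then to verify that inequality by splitting $\Om \setm E_1$ into the complement of $E_0$ (where local $K$-outward minimality of $E_0$ applies) and the annular region $E_0 \setm E_1$ (where the definition of thresholding applies directly).

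First, since $E_0$ is locally $K$-outward minimizing, Theorem~\ref{contraction} gives $|E_1 \setm E_0|=0$, so $E_1 \subs E_0$ up to null sets and
\begin{align*}
\chi_{E_1^c} = \chi_{E_0^c} + \chi_{E_0 \setm E_1}, \qquad \chi_{E_1} = \chi_{E_0} - \chi_{E_0 \setm E_1}.
\end{align*}
Applying Lemma~\ref{main:con} with $E=E_1$, $F=G$, and $\Ac(E_1,G) = 2\int_G K \ast \chi_{E_0 \setm E_1}\,dx$ turns the target inequality into
\begin{align*}
\int_G K \ast \chi_{E_1^c}\,dx \;\geq\; \int_G K \ast \chi_{E_1}\,dx \;+\; 2\int_G K \ast \chi_{E_0 \setm E_1}\,dx.
\end{align*}
Substituting the two identities above and canceling the $K \ast \chi_{E_0 \setm E_1}$ contributions on both sides collapses this to
\begin{align*}
\int_G K \ast \chi_{E_0^c}\,dx \;\geq\; \int_G K \ast \chi_{E_0}\,dx \qquad \text{for every } G \subs \Om \setm E_1,
\end{align*}
so it suffices to verify the pointwise inequality $(K \ast \chi_{E_0^c})(x) \geq (K \ast \chi_{E_0})(x)$ for almost every $x \in \Om \setm E_1$.

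To verify this I would split $\Om \setm E_1 = (\Om \setm E_0) \cup (E_0 \setm E_1)$. On $\Om \setm E_0$ the desired pointwise inequality is precisely the pointwise reformulation of local $K$-outward minimality of $E_0$ extracted inside the proof of Theorem~\ref{contraction}. On the annulus $E_0 \setm E_1$, any $x \notin E_1 = T_K E_0$ satisfies $(K \ast \chi_{E_0})(x) \leq (K \ast \chi_{E_0^c})(x)$ directly from the definition~\eqref{eq:def_thresholding} of the thresholding operator. The ``in particular'' statement then follows at once: both $\SK_K(G) \geq 0$ and $2\int_G K \ast \chi_{E_0 \setm E_1}\,dx \geq 0$ are non-negative, so dropping the latter yields $P_K(E_1 \cup G) + \SK_K(G) \geq P_K(E_1)$, which by Theorem~\ref{contraction} is exactly local $K$-outward minimality of $E_1$; the container $\Om$ remains sufficiently large for $E_1$ because $T_K E_1 \subs E_1 \subs E_0 \subs \Om$ by Theorem~\ref{mon:vel}.

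There is no substantial obstacle, only a bookkeeping point: the factor $2$ in the auxiliary $\Ac$ is exactly what is required so that the identity $\chi_{E_1^c} - \chi_{E_1} = \chi_{E_0^c} - \chi_{E_0} + 2\chi_{E_0 \setm E_1}$ causes the $E_0 \setm E_1$ contributions to cancel and the inequality decouples into a clean $E_0$-level statement. Choosing the right $\Ac$ is thus the only non-obvious step in the argument.
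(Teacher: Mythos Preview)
Your proof is correct and follows essentially the same route as the paper: reduce via Lemma~\ref{main:con} to the integral inequality $\int_G K \ast \chi_{E_0^c}\,dx \geq \int_G K \ast \chi_{E_0}\,dx$ on $G \subs \Om \setm E_1$, then use the identities $\chi_{E_0^c} = \chi_{E_1^c} - \chi_{E_0 \setm E_1}$ and $\chi_{E_0} = \chi_{E_1} + \chi_{E_0 \setm E_1}$ to pass back and forth. The one redundancy is your case split of $\Om \setm E_1$ into $\Om \setm E_0$ and $E_0 \setm E_1$: since $\Om \setm E_1 \subs E_1^c$ and $E_1 = T_K E_0 = \{K \ast \chi_{E_0} > 1/2\}$, the pointwise inequality $(K \ast \chi_{E_0^c})(x) \geq (K \ast \chi_{E_0})(x)$ holds directly on all of $E_1^c$ from the definition of thresholding, so invoking local $K$-outward minimality of $E_0$ separately on $\Om \setm E_0$ is not needed.
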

\begin{proof}
By Theorem~\ref{contraction} $E_1 \subseteq E_0$. Let $G \subseteq E_1^c$. By definition of thresholding
\begin{align*}
\int_G K \ast \chi_{E_0^c} \,dx \geq \int_G K \ast \chi_{E_0} \,dx.
\end{align*}
Since $\chi_{E_0^c} = \chi_{E_1^c} - \chi_{E_0 \setm E_1}$ and $\chi_{E_0} = \chi_{E_1} + \chi_{E_0 \setm E_1}$, this is equivalent to 
\begin{align*}
\int_G K \ast \chi_{E_1^c} \,dx \geq \int_G K \ast \chi_{E_1} \,dx + 2\int_G K \ast \chi_{E_0 \setminus E_1} \,dx.
\end{align*}
By Lemma~\ref{main:con} this is equivalent to 
\begin{align*}
P_K(E_1 \cup G) + \SK_K(G) \geq P_K(E_1) + 2\int_G K \ast \chi_{E_0 \setminus E_1} \,dx.
\end{align*}
Then, $E_1$ being locally $K$-outward minimizing in $\Om$ follows immediately from
\begin{align*}
2\int_G K \ast \chi_{E_0 \setminus E_1} \,dx \geq 0. & \qedhere
\end{align*}
\end{proof}

As an immediate consequence, if $E_0$ contracts under thresholding, its $K$-perimeter decreases as well. This is true in general, if $K$ has non-negative Fourier transform \cite{Esedoglu:Otto}, but in our case, this assumption is not needed.
\begin{corollary}
\label{des:ener}
Let $E_0$ be locally $K$-outward minimizing in $\Om$. Then
\begin{align*}
P_K(E_0) - \SK_K(E_0 \setminus E_1) \geq P_K(E_1).
\end{align*}
\end{corollary}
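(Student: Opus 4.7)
The plan is to invoke Theorem~\ref{per:contr} directly with the natural choice of test set $G = E_0 \setminus E_1$. First I would note that by Theorem~\ref{contraction} the hypothesis that $E_0$ is locally $K$-outward minimizing in $\Om$ already forces $|E_1 \setm E_0| = 0$, i.e., $E_1 \subs E_0$ up to null sets. In particular, $G = E_0 \setm E_1$ is disjoint from $E_1$ and contained in $\Om$ (since $E_0 \subs \Om$), so $G \subs \Om \setm E_1$ is an admissible test set in Theorem~\ref{per:contr}.

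Next I would simply substitute. With this choice of $G$ one has $E_1 \cup G = E_0$ and
\begin{align*}
\int_G K \ast \chi_{E_0 \setm E_1}\,dx = \SK_K(E_0 \setm E_1),
\end{align*}
so the conclusion of Theorem~\ref{per:contr} reads
\begin{align*}
P_K(E_0) + \SK_K(E_0 \setm E_1) \geq P_K(E_1) + 2\,\SK_K(E_0 \setm E_1).
\end{align*}
Subtracting $\SK_K(E_0 \setm E_1)$ from both sides yields the desired inequality.

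There is essentially no obstacle here: Theorem~\ref{per:contr} has already done all the work of propagating the outward-minimizing information into a perimeter bound on $E_1$ with an explicit excess term. The only thing one has to verify is admissibility of the particular test set $G = E_0 \setm E_1$, which is immediate from the contraction $E_1 \subs E_0$ provided by Theorem~\ref{contraction}. If one preferred a more self-contained argument one could also run the equivalence in Lemma~\ref{main:con} directly with $E = E_1$, $F = E_0 \setm E_1$, and $\Ac = 2\,\SK_K(E_0 \setm E_1)$; the resulting integral inequality
\begin{align*}
\int_{E_0 \setm E_1} K \ast \chi_{E_0^c}\,dx \geq \int_{E_0 \setm E_1} K \ast \chi_{E_0}\,dx
\end{align*}
holds pointwise a.e.\ on $E_0 \setm E_1$, because points there do not belong to $E_1 = \{K \ast \chi_{E_0} > 1/2\}$, hence $K \ast \chi_{E_0} \leq 1/2 \leq K \ast \chi_{E_0^c}$ there (using $(K \ast 1) = 1$).
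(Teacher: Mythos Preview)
Your proof is correct and follows exactly the paper's own approach: the paper's proof consists of the single sentence ``Set $G$ to $E_0 \setminus E_1$ in Theorem~\ref{per:contr}.'' Your additional remarks on admissibility of $G$ and the alternative direct argument via Lemma~\ref{main:con} are fine but unnecessary.
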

\begin{proof}
Set $G$ to $E_0 \setminus E_1$ in Theorem~\ref{per:contr}.
\end{proof}

If we have additional information on comparison with supersets of $E_0$, we can maintain them somewhat for $E_1$. To prove this, we use the equivalent formulation of the outward minimizing condition established in Proposition~\ref{omc:eq}. This also proves that $E_1$ inherits $K$-outward minimality from $E_0$.

\begin{corollary}
\label{ext:omc}
Let $E_0 \subseteq \Omega \subseteq \R^{d}$ with $P_K(E_0)<\infty$, $K$ be a suitable convolution kernel in the sense of Definition~\ref{suit:K} and $\Omega$ be sufficiently large in the sense of Definition~\ref{suff}. Let $\Ac: \mathcal{M}(\Om \setm E_0) \rightarrow \mathbb{R}$, with $\Ac \geq -\SK_K$. If for all $F \subseteq \Om$:
\begin{align*}
P_K(E_0 \cup F) \geq P_K(E_0) + \Ac(F \setminus E_0),
\end{align*}
then for all $F \subseteq \Om$
\begin{align*}
&P_K(E_1 \cup F) \\
&~~\geq P_K(E_1) + \Ac(F \setminus E_0) + \int_{F \cap (E_0\setminus E_1)} K \ast (2\chi_{E_0 \setminus E_1}-\chi_{F \cap (E_0 \setminus E_1)}) \,dx.
\end{align*}
In particular, if $E_0$ is $K$-outward minimizing in $\Om$, then so is $E_1$.
\end{corollary}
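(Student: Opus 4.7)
The strategy is to translate both the hypothesis and the desired conclusion into equivalent integral statements via Lemma~\ref{main:con}, at which point the inequality reduces to two elementary pointwise facts: the definition of $E_1$ as a super-level set, and monotonicity of convolution with $K\geq 0$. As a preliminary, note that $\Ac\geq -\SK_K$ together with the hypothesis yields $P_K(E_0\cup F)+\SK_K(F\setm E_0)\geq P_K(E_0)$ for every $F\subs\Om$; Theorem~\ref{contraction} (applied with test set $F\setm E_0$) then gives $E_1\subs E_0$ up to null sets, and in particular $P_K(E_1)<\infty$ by Corollary~\ref{des:ener}. Applying Lemma~\ref{main:con} to the hypothesis (with $E=E_0$ and $F$ replaced by $F_1\subs\Om\setm E_0$, after adding $\SK_K(F_1)$ on both sides) recasts it as
\begin{equation*}
\int_{F_1} K\ast\chi_{E_0^c}\,dx \;\geq\; \int_{F_1} K\ast\chi_{E_0}\,dx + \Ac(F_1) + \SK_K(F_1). \qquad (\star)
\end{equation*}

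Since $E_1\subs E_0$, every term in the desired conclusion is unchanged by replacing $F$ with $F\setm E_1$, so we may assume $F\cap E_1=\emptyset$ and decompose $F=F_1\sqcup F_2$ where $F_1:=F\setm E_0$ and $F_2:=F\cap A$ with $A:=E_0\setm E_1$. Applying Lemma~\ref{main:con} once more (to $E=E_1$ and this $F$) turns the desired conclusion into
\begin{equation*}
\int_F K\ast\chi_{E_1^c}\,dx - \int_F K\ast\chi_{E_1}\,dx \;\geq\; \Ac(F_1) + \int_{F_2} K\ast(2\chi_A-\chi_{F_2})\,dx + \SK_K(F).
\end{equation*}
Now substitute the identities $\chi_{E_1^c}-\chi_{E_1}=(\chi_{E_0^c}-\chi_{E_0})+2\chi_A$ on the left and $\SK_K(F)=\SK_K(F_1)+\SK_K(F_2)+2\int_{F_1}K\ast\chi_{F_2}\,dx$ on the right, cancel the matching $2\int_{F_2} K\ast\chi_A$ and $\SK_K(F_2)$ contributions, and absorb the $F_1$-block via $(\star)$. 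What remains is
\begin{equation*}
\int_{F_2} K\ast(\chi_{E_0^c}-\chi_{E_0})\,dx + 2\int_{F_1} K\ast(\chi_A-\chi_{F_2})\,dx \;\geq\; 0,
\end{equation*}
both of whose summands are individually non-negative: on $F_2\subs A\subs E_1^c$, the very definition of $E_1$ together with $\int K=1$ gives $K\ast\chi_{E_0^c}\geq 1/2\geq K\ast\chi_{E_0}$; and $\chi_{F_2}\leq\chi_A$ combined with $K\geq 0$ gives $K\ast\chi_{F_2}\leq K\ast\chi_A$ pointwise.

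The ``in particular'' assertion follows by specializing to $\Ac\equiv 0$, which satisfies $\Ac\geq -\SK_K$ since $\SK_K\geq 0$; in the resulting conclusion, $2\chi_A-\chi_{F_2}\geq \chi_A\geq 0$ together with $K\geq 0$ makes the extra integral non-negative, so one recovers $P_K(E_1\cup F)\geq P_K(E_1)$. The main obstacle is purely bookkeeping after the partition $F=F_1\sqcup F_2$: one must carefully split $\SK_K(F)$ into its three parts and track how the mixed $K\ast\chi_A$ integrals on $F_1$ and $F_2$ recombine, so that after invoking $(\star)$ the residue admits an obvious sign. No regularity of $E_0$ or $K$, and no ingredient beyond Lemma~\ref{main:con} and the definition of thresholding, is required.
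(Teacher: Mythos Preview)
Your proof is correct, and it takes a genuinely different route from the paper's.

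The paper argues at the perimeter level, relying on the equivalence between the ``outward'' and ``inward'' forms of the minimality condition (Corollary~\ref{cor:per}, which in turn rests on the submodularity Lemma~\ref{per:prop}). Concretely, it converts both the hypothesis on $E_0$ and the output of Theorem~\ref{per:contr} for $E_1$ into their inward versions $P_K(F)\geq P_K(E_0\cap F)+\cdots$ and $P_K(G)\geq P_K(E_1\cap G)+\cdots$, chains them by setting $G=F\cap E_0$, and then converts back via Corollary~\ref{cor:per} once more.

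Your approach instead stays at the integral level throughout: you translate both hypothesis and conclusion via Lemma~\ref{main:con}, split $F=F_1\sqcup F_2$ along $\partial E_0$, and reduce everything to two pointwise facts---the thresholding inequality $K\ast\chi_{E_0}\leq 1/2$ on $F_2\subs E_1^c$, and the inclusion $F_2\subs A$. This bypasses Corollary~\ref{cor:per} and the submodularity lemma entirely, so your argument is logically lighter in its dependencies, at the cost of heavier bookkeeping when expanding $\SK_K(F)$ and tracking the cross-terms. The paper's route is more modular and foregrounds the duality of Proposition~\ref{omc:eq} as a reusable structural tool; yours shows that the result already follows from Lemma~\ref{main:con} and the definition of $E_1$ alone.
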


\begin{proof}
As $\Ac \geq -\SK_K$, by Theorem~\ref{contraction} $E_1 \subseteq E_0$. The alternative definition of the outward minimizing conditions allows now to combine information on $\Om \setminus E_0$ and $E_0 \setminus E_1$. Firstly,
\begin{align*}
P_K(E_0 \cup F) \geq P_K(E_0) + \Ac(F \setminus E_0)
\end{align*}
for all $F \subseteq \Om$ is by Corollary~\ref{cor:per} equivalent to 
\begin{align*}
P_K(F) \geq P_K(E_0 \cap F) + \Ac(F \setminus E_0)
\end{align*}
for all $F \subseteq \Om$. Secondly, by Theorem~\ref{per:contr}, for all $G \subseteq \Om \setminus E_1$
\begin{align*}
P_K(E_1 \cup G) + \SK_K(G) \geq P_K(E_1) + 2\int_G K \ast \chi_{E_0 \setminus E_1} \,dx.
\end{align*}
We write equivalently, that for all $G \subseteq \Om$
\begin{align*}
P_K(E_1 \cup G) \geq P_K(E_1) + \int_{G \setminus E_1} K \ast (2\chi_{E_0 \setminus E_1} - \chi_{G \setminus E_1}) dx.
\end{align*}
But this is, again by Corollary~\ref{cor:per}, equivalent to
\begin{align*}
P_K(G) \geq P_K(E_1 \cap G) + \int_{G \setminus E_1} K \ast (2\chi_{E_0 \setminus E_1} - \chi_{G \setminus E_1}) dx
\end{align*}
for all $G \subseteq \Om$. We set $G = F \cap E_0$ and combine. For all $F \subseteq \Om$
\begin{align*}
P_K(F) \geq& \; P_K(F \cap E_0) + \Ac(F \setminus E_0) \\
\geq& \;P_K(F \cap E_0 \cap E_1) + \Ac(F \setminus E_0)\\
&+ \int_{(F \cap E_0) \setminus E_1} K \ast (2\chi_{E_0 \setminus E_1} - \chi_{(F \cap E_0) \setminus E_1}) dx.
\end{align*}
With $E_1 \subseteq E_0$ we can write, that for all $F \subseteq \Om$
\begin{align*}
&P_K(F)
\\&~~~ \geq P_K(F \cap E_1) + \Ac(F \setminus E_0) + \int_{F \cap (E_0 \setminus E_1)} K \ast (2\chi_{E_0 \setminus E_1} - \chi_{F \cap (E_0 \setminus E_1)}) dx.
\end{align*}
The last two summands can be written as an operator on $F \setm E_1$. This allows us to apply Corollary~\ref{cor:per} again, which concludes the proof. This also proves, that if $E_0$ is $K$-outward minimizing ($\Ac = 0$), so is $E_1$, as
\begin{align*}
\int_{F \cap (E_0 \setminus E_1)} K \ast (2\chi_{E_0 \setminus E_1} - \chi_{F \cap (E_0 \setminus E_1)}) dx \geq 0. & \qedhere
\end{align*}
\end{proof}

\section{Initial step and consistency}
\label{sec:initial step}

	In Section~\ref{sec:contraction}, we have shown that for general non-negative kernels $K$, contracting sets keep contracting at a non-decreasing rate.
	This means that we essentially only need to understand the first time step in order to make statements about the whole evolution.\\
	
	This section is devoted to studying this first time step. 
	While the arguments in the previous sections were completely general regarding the convolution kernel $K$, we now have to be more specific.
	In Subsection \ref{subsec:initial gaussian}, we focus on the standard Gaussian kernel, in which case the estimates can be made explicit using the error function. 
	In Subsection \ref{subsec:initial anisotropies}, we treat four large classes of anisotropic kernels with mild decay and regularity properties.
	We stress that in contrast to Section~\ref{sec:contraction}, the results in this section do not rely on the pointwise positivity of the kernel.
	All proofs are given in Section~\ref{sec:initial step proof}.
	
\subsection{The Gaussian case}
\label{subsec:initial gaussian}

In this subsection, we analyze the initial time step in the standard case, where $K$ is the heat kernel
\begin{align*}
K(x) = G_h(x) \coloneqq (4 \pi h)^{-d/2} \exp\left(-\frac{|x|^2}{4h}\right).
\end{align*}
This is a suitable kernel, as $G_h$ is positive, rotation invariant and $\|G_h\|_1 = 1$. Then
\begin{align*}
E_1^h \coloneqq T_{G_h}E_0 = \{x \in \R^{d} | (G_h \ast \chi_{E_0})(x) > 1/2\}.
\end{align*}

We want to show $\dist(E_1^h, \partial E_0) \geq h(1-\eps)H_0$. To obtain this, we prove uniform convergence of the signed distance of the first step. In the following statement, we parametrize the new interface $\partial E_1$ as a graph over the initial interface $\partial E_0$, as can be seen in Figure~\ref{fig:par surface}. We define $z$ as the distance of the boundary in direction of the outer normal. 

%Parametrization of the surface
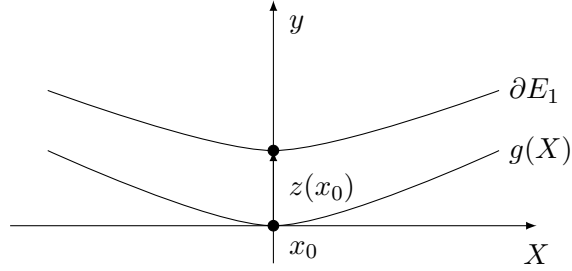
\begin{figure}
\centering
\begin{tikzpicture}

\draw [-latex] (-3.5,0) -- (3.5,0);

\node [below=3pt of {(3.5,0)}] {$X$};

\draw [-latex] (0,-.5) -- (0,3);

\node [below right=3pt of {(0,3)}] {$y$};

\draw plot [smooth] coordinates{(-3,1) (0,0)  (3,1)};

\draw plot [smooth] coordinates{(-3,1.8) (0,1)  (3,1.8)};

\node [right=2pt of {(0,0.5)}] {$z(x_0)$};

\node [below right=3pt of {(0,0)}] {$x_0$};

\draw [-latex] (0,0) -- (0,1);

\filldraw (0,1) circle (2pt);

\filldraw (0,0) circle (2pt);

\node [ right = 0pt of {(3,1)}] {$g(X)$};

\node [ right = 0pt of {(3,1.8)}] {$\partial E_1$};
\end{tikzpicture}
\caption{Parametrization of the surface}
\label{fig:par surface}
\end{figure}

\begin{theorem}
\label{unif:con}
Let $E_0 \subs \R^{d}$ be a bounded set with $C^{2,\alpha}$-boundary, for some $\alpha \in (0,1]$. For $x \in \partial E_0$, let $\nu(x)$ be the outer unit normal of $E_0$ in $x$ and $H(x;E_0)$ be the mean curvature of $\partial E_0$ in $x$. We define
\begin{equation*}
z(x) \coloneqq \begin{cases}
\sup\{l \in \R_-| x + l\nu \in E_1^h\} &\text{ if } x \notin E_1^h,\\
~\inf\{l \in \R_+| x + l\nu \notin E_1^h\} &\text{ if } x \in E_1^h.
\end{cases}
\end{equation*}
Then for $h$ sufficiently small, uniformly in $x$,
\begin{align*}
z(x) = -hH(x;E_0) + \Oc(h^{1+\alpha / 2}).
\end{align*}
Moreover, if $E_0$ has $C^4$-boundary
\begin{align*}
z(x) = -hH(x;E_0) + \Oc(h^{2}).
\end{align*}
\end{theorem}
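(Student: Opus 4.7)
The approach is to parametrize $\partial E_0$ locally as a graph over its tangent plane at $x_0$ and to solve the equation $(G_h \ast \chi_{E_0})(x_0 + l\nu) = 1/2$ for $l = z(x_0)$ via a Taylor expansion in the variables rescaled by $\sqrt{h}$. By $C^{2,\alpha}$ regularity and compactness of $\partial E_0$, there is a uniform $r_0>0$ such that in orthonormal coordinates with $x_0 = 0$ and $\nu(x_0) = e_d$, the set $E_0 \cap B_{r_0}(0)$ agrees with $\{y < g(X)\}$ for a function $g$ satisfying $g(0) = 0$, $\nabla g(0) = 0$, $\mathrm{tr}(\nabla^2 g(0)) = -H(x_0;E_0)$, together with the uniform remainder bound $|g(X) - \tfrac{1}{2} X^{\top}\nabla^2 g(0)\,X| \le C |X|^{2+\alpha}$.

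Define $F(l) := (G_h \ast \chi_{E_0})(x_0 + l\nu)$. The contribution to $F(l)$ from $\R^d\setminus B_{r_0}(0)$ is uniformly $\mathcal{O}(e^{-c/h})$ by Gaussian decay. Inside $B_{r_0}(0)$, passing to graph coordinates, applying Fubini, and rescaling $(X,y) = \sqrt{h}(\tilde X,\tilde y)$ yields
\begin{align*}
F(l) = \int_{\R^{d-1}} \tilde{G}(\tilde X)\, \Phi\!\left(\frac{g(\sqrt{h}\,\tilde X) - l}{\sqrt{h}}\right) d\tilde X + \mathcal{O}(e^{-c/h}),
\end{align*}
where $\tilde{G}$ is the unit-mass Gaussian on $\R^{d-1}$ (with $\int \tilde X_i \tilde X_j \tilde{G}\,d\tilde X = 2 \delta_{ij}$) and $\Phi$ is the one-dimensional Gaussian CDF, satisfying $\Phi(0) = \tfrac{1}{2}$, $\Phi'(0) = \tfrac{1}{2\sqrt{\pi}}$, and $\Phi''(0) = 0$.

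Substituting the Taylor expansion of $g$,
\begin{align*}
\frac{g(\sqrt{h}\,\tilde X) - l}{\sqrt{h}} = \frac{\sqrt{h}}{2}\,\tilde X^{\top}\nabla^2 g(0)\,\tilde X - \frac{l}{\sqrt{h}} + \mathcal{O}\!\left(h^{(1+\alpha)/2}|\tilde X|^{2+\alpha}\right),
\end{align*}
expanding $\Phi$ about $0$, and computing the Gaussian moments, one obtains
\begin{align*}
F(l) = \tfrac{1}{2} + \frac{1}{2\sqrt{\pi}}\!\left(\sqrt{h}\,\mathrm{tr}(\nabla^2 g(0)) - \frac{l}{\sqrt{h}}\right) + \mathcal{O}\!\left(h^{(1+\alpha)/2}\right),
\end{align*}
uniformly in $x_0$. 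Since $\partial_l F(l) \le -c/\sqrt{h}$ on a neighborhood of $0$ of size much larger than $h$, the equation $F(l) = 1/2$ has a unique solution, giving
\begin{align*}
z(x_0) = h\,\mathrm{tr}(\nabla^2 g(0)) + \mathcal{O}(h^{1+\alpha/2}) = -h\,H(x_0;E_0) + \mathcal{O}(h^{1+\alpha/2}).
\end{align*}

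For a $C^4$ boundary one extends the Taylor expansion of $g$ by a cubic term $\tfrac{1}{6} B(X,X,X)$; after rescaling, it becomes $\tfrac{h}{6} B(\tilde X,\tilde X,\tilde X)$, which is odd in $\tilde X$ and integrates to zero against the even Gaussian $\tilde{G}$. Together with the first non-vanishing cubic contribution from the $\Phi$-expansion (via $\Phi'''(0) = -1/(4\sqrt{\pi})$), all next-order contributions to $F$ are $\mathcal{O}(h^{3/2})$, which translates after inversion to $\mathcal{O}(h^2)$ in $z$. The main technical obstacle is achieving all estimates \emph{uniformly in $x_0 \in \partial E_0$}: a uniform graph radius $r_0$, uniform Hölder (respectively $C^4$) norms of $g$ in these parametrizations, and uniform smallness of both the Gaussian tails and the local graph approximation errors. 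Compactness of $\partial E_0$ together with the regularity assumption supplies this uniformity.
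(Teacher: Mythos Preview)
Your approach is correct and structurally parallel to the paper's: local graph over the tangent plane, Gaussian tail cut-off, rescaling by $\sqrt h$, and Taylor expansion. The one genuine difference is how you localize $z$. The paper does not use the monotonicity of $l\mapsto F(l)$; instead it proves a separate a~priori bound $z=\mathcal O(h^{1-\eps})$ (its Lemma~\ref{z:bound}) by inscribing a ball and an orthotope at each boundary point and comparing Gaussian masses via the error function. Only with this bound in hand does it control the higher-order terms of the $\erf$ power series. Your route, replacing that geometric lemma by the derivative estimate $\partial_l F\le -c/\sqrt h$, is more economical and exploits a feature specific to the Gaussian that the paper does not use.

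One point in your write-up deserves tightening. The displayed expansion
\[
F(l)=\tfrac12+\tfrac{1}{2\sqrt\pi}\Big(\sqrt h\,\mathrm{tr}(\nabla^2 g(0))-\tfrac{l}{\sqrt h}\Big)+\mathcal O\big(h^{(1+\alpha)/2}\big)
\]
is \emph{not} uniform in $l$: the cubic remainder in the $\Phi$-expansion contributes a term of size $|l|^3/h^{3/2}$, which is $\mathcal O(1)$ if one only knows $|l|\lesssim\sqrt h$. The clean way to close your argument is to evaluate the expansion at the candidate $l^\ast=h\,\mathrm{tr}(\nabla^2 g(0))$ (where the error really is $\mathcal O(h^{(1+\alpha)/2})$), then use $|\partial_l F|\ge c/\sqrt h$ on a neighborhood of $l^\ast$ together with global strict monotonicity of $F$ to trap the unique zero in $|z-l^\ast|\le Ch^{1+\alpha/2}$. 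This is implicit in your sentence about the derivative bound but should be stated, since it is exactly the step the paper spends a lemma on. Your $C^4$ argument (odd cubic term in $g$ vanishes against the even Gaussian; the $\Phi'''$ contribution is $\mathcal O(h^{3/2})$) matches the paper's.
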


\subsection{General anisotropies}
\label{subsec:initial anisotropies}

In this subsection, we analyze the initial time step for four classes of general convolution kernels. 
Before extending the statement on initial/smooth steps to a large class of convolution kernels, we study the connection between convolution kernel and approximated movement.\\

Mean curvature flow is the steepest descent for the perimeter of a domain. This perimeter can be expressed as
\begin{equation}
\label{eq:standardperimeter}
P(E) = \int_{\partial^* E}|\nu(x)| d\mathcal{H}^{d-1}(x).
\end{equation}
The measure of surface is independent of direction, or isotropic. If we are to measure the surface weighted dependent on its direction, or anisotropically, we get
\begin{align*}
P_{\sigma}(E) = \int_{\partial^* E} \sigma(\nu(x)) d\mathcal{H}^{d-1}(x),
\end{align*}
for a surface tension $\sigma$: $\Sd \rightarrow \mathbb{R}$ continuous and even. From this we get a\\different mean curvature, the resulting mean curvature flow optimizing descent of this surface measure. We define the adjusted $K_h$-perimeter, which is the time step dependent surface energy measure as discussed in the introduction
\begin{align*}
P_{K,h}(E) \coloneqq \frac{1}{\sqrt{h}}P_{K_h}(E) = h^{-(d+1)/2} \int_{E^c} K\left( \frac{\cdot}{\sqrt{h}} \right) \ast \chi_E \,dx. 
\end{align*}
The connection between surface tension and kernel is given by Elsey and Esedo\u{g}lu in \cite{Elsey:Esedoglu} for compact sets with smooth boundary. We extend the proof analogously to sets of finite perimeter.

\begin{prop}
\label{anis:per}
Let $E \subs \R^{d}$ be a set of finite perimeter and $K$ an even function with $\int_{\Rd} K(x) dx =1$ and
\begin{equation}
\label{K:bound}
K(x) \leq \frac{C}{1+|x|^p}
\end{equation}
for some positive number $C<\infty$ and $p>d+1$. Then, for $\nu$ the outer unit normal
\begin{align*}
\lim_{h \downarrow 0}P_{K,h}(E) = \int_{\partial^* E} \sigma_K(\nu(x)) d\mathcal{H}^{d-1}(x),
\end{align*}
where the surface tension is defined as
\begin{align*}
\sigma_K(\nu) \coloneqq \frac{1}{2} \int_{\R^{d}} |\nu \cdot x| K(x) dx.
\end{align*}
\end{prop}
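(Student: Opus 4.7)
The plan is to reduce the proposition to the classical translation-limit characterization of perimeter for BV functions. First, applying Lemma~\ref{alt:per} with $K$ replaced by the scaled kernel $K_h$, one has
\begin{equation*}
P_{K,h}(E) = \frac{1}{\sqrt h}\, P_{K_h}(E) = \frac{1}{2\sqrt h} \int_{\Rd} K_h(z) \int_{\Rd} |\chi_E(x) - \chi_E(x-z)|\,dx\,dz.
\end{equation*}
The change of variables $z = \sqrt h\, w$ turns $K_h(z)\,dz$ into $K(w)\,dw$ and produces
\begin{equation*}
P_{K,h}(E) = \frac{1}{2} \int_{\Rd} K(w) \left( \frac{1}{\sqrt h} \int_{\Rd} |\chi_E(x) - \chi_E(x - \sqrt h\, w)|\,dx \right) dw.
\end{equation*}

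Next I would pass to the limit $h \downarrow 0$ under the outer integral. For each fixed $w$, the standard translation estimate for BV functions yields
\begin{equation*}
\int_{\Rd} |\chi_E(x) - \chi_E(x - \sqrt h\, w)|\,dx \leq \sqrt h\, |w|\, P(E),
\end{equation*}
while the classical limit formula for the perimeter (see, e.g., Ambrosio--Fusco--Pallara) provides the pointwise convergence
\begin{equation*}
\lim_{h \downarrow 0} \frac{1}{\sqrt h} \int_{\Rd} |\chi_E(x) - \chi_E(x - \sqrt h\, w)|\,dx = \int_{\partial^* E} |\nu(x) \cdot w|\, d\mathcal{H}^{d-1}(x).
\end{equation*}
Together these give a uniform pointwise majorant $K(w)\,|w|\,P(E)$ for the outer integrand that is independent of $h$.

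The decay assumption \eqref{K:bound} with $p > d+1$ makes this majorant $L^1(\Rd)$, since $|w|/(1+|w|^p)$ behaves like $|w|^{1-p}$ at infinity and $p - 1 > d$. Dominated convergence then yields
\begin{equation*}
\lim_{h \downarrow 0} P_{K,h}(E) = \frac{1}{2} \int_{\Rd} K(w) \int_{\partial^* E} |\nu(x) \cdot w|\, d\mathcal{H}^{d-1}(x)\, dw,
\end{equation*}
and swapping the two integrals by Fubini (or directly reading off the definition of $\sigma_K$) identifies the right-hand side as $\int_{\partial^* E} \sigma_K(\nu(x))\, d\mathcal{H}^{d-1}(x)$.

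The main obstacle compared to the smooth-boundary case treated in \cite{Elsey:Esedoglu} is the pointwise translation-limit identity for the characteristic function of a set of merely finite perimeter; once this classical BV fact is invoked, the algebraic identity from Lemma~\ref{alt:per} and the polynomial decay hypothesis \eqref{K:bound} do all of the remaining analytic work.
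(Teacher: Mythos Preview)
Your proof is correct and follows essentially the same approach as the paper's: both rewrite $P_{K,h}(E)$ as an outer integral in $K$ against rescaled translation increments of $\chi_E$, invoke the BV translation bound $\int|\chi_E(x)-\chi_E(x-\sqrt h\,w)|\,dx \le \sqrt h\,|w|\,P(E)$ for the dominating function, the classical BV translation-limit identity for the pointwise limit, and then apply dominated convergence and Fubini. The only cosmetic difference is that you start from the symmetric representation of Lemma~\ref{alt:per}, whereas the paper works with the one-sided form $\int_{E^c}\chi_E(x-\sqrt h\,y)\,dx$ and the limit $(\nu\cdot y)_+$, using evenness of $K$ at the end to recover $\tfrac12|\nu\cdot y|$.
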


This gives us the connection between convolution kernel and surface tension. To find a kernel for a specific surface tension we need to solve an inverse problem. We can simplify the inverse problem for a function on the entire space to an inverse problem for a function on the sphere by writing
\begin{align*}
\sigma_K(\nu) &\coloneqq \frac{1}{2} \int_{\R^{d}} |\nu \cdot x| K(x) dx\\
&= \frac{1}{2} \int_{\Sd} |\nu \cdot \theta| \int_0^{\infty} r^d K(r\theta) dr d\mathcal{H}^{d-1}(\theta)
\end{align*}
and we can define the tension generating directional distribution
\begin{align*}
A(\theta) \coloneqq \int_0^{\infty} r^d K(r \theta) dr.
\end{align*}
Thus, we can write the surface tension as
\begin{align*}
\sigma_A(\nu) = \frac{1}{2} \int_{\Sd} |\nu \cdot \theta| A(\theta) d\mathcal{H}^{d-1}(\theta).
\end{align*}
In particular, all kernels $K$ generating the same function $A$ generate the same surface tension. \\

We now derive the anisotropic curvature function from the surface tension. We begin by canonically extending a given surface tension function $\sigma$ defined on the sphere $\Sd$ to the whole space $\R^d$ by
\begin{align*}
\sigma(p) \coloneqq |p|\sigma\left( \frac{p}{|p|}\right) \text{, } p \in \R^d.
\end{align*}
Let $\tau_i$ be the orthonormal basis of eigenvectors of the second fundamental form. This is a basis of the orthogonal complement of the unit vector $\nu$ with $\nabla \nu \tau_i = \kappa_i \tau_i$. If the surface tension function is sufficiently smooth, the curvature function is defined as
\begin{align*}
H_{\sigma}(x;E_0) &= \left( \nabla \cdot (D_p \nu) \right)(x)\\
&= \sum_{i=1}^{d-1} (\tau_i \cdot \nabla) D_p\sigma(\nu) \tau_i \\
&= \sum_{i=1}^{d-1} \tau_i \cdot D_p^2\sigma(\nu) \nabla \nu \tau_i
\end{align*}
and by definition of the principal curvatures
\begin{align*}
\nabla \nu \tau_i = \kappa_i \tau_i.
\end{align*}
Thus, we can write
\begin{align*}
H_{\sigma}(x;E_0) = \sum_{i=1}^{d-1} \kappa_i (\tau_i \cdot D_p^2\sigma(\nu) \tau_i).
\end{align*}
Now we want to choose a kernel $K$ generating a given surface tension $\sigma$. Being able to rotate $K$ arbitrarily, we set $\nu(x)=e_d$ and $\tau_i = e_i$. Then, the proof of Proposition 6 in \cite{Elsey:Esedoglu} gives us the following result.

\begin{prop}
\label{div:sigma}
Let $K$ satisfy the conditions of Proposition~\ref{anis:per} and 
\begin{equation}
\label{eq:convergence of quadratic}
\underset{\eps \rightarrow 0}{\lim} \underset{O \in \mathrm{O}(d)}{\sup} \int_{\R^{d-1}} |K \circ O(X,\eps f(X)) - K \circ O(X,0)| (1 + |X|^2) dX = 0,
\end{equation}
with $f(X) = (X \cdot MX)\, +\, a$ for a fixed symmetric $(d-1)$-matrix $M$ and number $a$. Then
\begin{align*}
D^2\sigma_{K}(x) = \frac{1}{|x|} \int_{\{y \cdot x = 0\}} y \otimes y K(y) d \mathcal{H}^{d-1}(y).
\end{align*}
\end{prop}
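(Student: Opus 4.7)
The plan is to compute $D^2\sigma_K(p)$ by a second-order Taylor expansion of the integral representation
\[\sigma_K(p) = \tfrac{1}{2}\int_{\R^d} |p\cdot x|\,K(x)\,dx\]
and to isolate the contribution from the thin slab near $\{p\cdot x = 0\}$ where the sign of $p\cdot x$ flips. Since $\sigma_K$ is positively $1$-homogeneous, $D^2\sigma_K$ is $(-1)$-homogeneous and satisfies $D^2\sigma_K(p)p = 0$, so by symmetry of the Hessian it suffices to verify the formula for $q\perp p$. Choosing a rotation $O\in\mathrm{O}(d)$ with $Oe_d = p/|p|$ further reduces the problem to $p = e_d$ at the cost of replacing $K$ by $K\circ O$; this is the reason the $\mathrm{O}(d)$-supremum appears in hypothesis \eqref{eq:convergence of quadratic}.

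With $p = e_d$ and $q\in e_d^\perp$, I split $x = (X,x_d)\in\R^{d-1}\times\R$ and observe that
\[|(p+\eps q)\cdot x| - |x_d| - \eps\,\sgn(x_d)(q\cdot X) = 0\]
outside the slab $\{|x_d|<\eps|q\cdot X|\}$, since on its complement $x_d$ and $x_d+\eps q\cdot X$ share a sign. Inside the slab, the change of variables $x_d = \eps u$ with $|u|<|q\cdot X|$ turns the integrand into $\eps\bigl(|u+q\cdot X|-|u|-\sgn(u)(q\cdot X)\bigr)\,K(X,\eps u)$ with Jacobian $\eps$, yielding a total contribution of order $\eps^2$. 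Matching this with $\sigma_K(p+\eps q) = \sigma_K(p) + \eps\,D\sigma_K(p)\cdot q + \tfrac{\eps^2}{2}D^2\sigma_K(p)q\cdot q + o(\eps^2)$ identifies $\tfrac12 D^2\sigma_K(e_d)q\cdot q$ with the $\eps\downarrow 0$ limit of
\[\tfrac{1}{2}\int_{\R^{d-1}}\int_{|u|<|q\cdot X|}\bigl(|u+q\cdot X|-|u|-\sgn(u)(q\cdot X)\bigr)\,K(X,\eps u)\,du\,dX.\]

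The main obstacle is justifying the replacement of $K(X,\eps u)$ by $K(X,0)$ inside this integral, and this is precisely the role of \eqref{eq:convergence of quadratic}: with $M = 0$ and $a = u$, it gives, for each fixed $u$, $\int_{\R^{d-1}} |K(X,\eps u) - K(X,0)|(1+|X|^2)\,dX \to 0$. The weight $(1+|X|^2)$ is matched by the bound $\bigl|\,|u+q\cdot X|-|u|-\sgn(u)(q\cdot X)\bigr|\,\mathbf{1}_{|u|<|q\cdot X|}\lesssim |q|^2(1+|X|^2)$, while the tail decay \eqref{K:bound} supplies an $\eps$-independent dominating function for the outer $u$-integration, so dominated convergence completes the limit passage.

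A direct computation of the inner integral with $K(X,0)$ then yields $(q\cdot X)^2$: for $q\cdot X>0$ the integrand vanishes on $(0,q\cdot X)$ and equals $2(u+q\cdot X)$ on $(-q\cdot X,0)$, with antiderivative $(u+q\cdot X)^2$ evaluating to $(q\cdot X)^2$; the case $q\cdot X<0$ is symmetric. Hence $D^2\sigma_K(e_d)q\cdot q = \int_{\{y\cdot e_d = 0\}}(q\cdot y)^2\,K(y)\,d\mathcal{H}^{d-1}(y)$, and undoing the rotation $O$ together with the $(-1)$-homogeneity of $D^2\sigma_K$ upgrades this to $D^2\sigma_K(p)q\cdot q = \tfrac{1}{|p|}\int_{\{y\cdot p = 0\}}(q\cdot y)^2\,K(y)\,d\mathcal{H}^{d-1}(y)$. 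Combined with $D^2\sigma_K(p)p = 0$ (which matches the vanishing of $y\otimes y\, p$ on the hyperplane) and symmetry, this is the claimed identity.
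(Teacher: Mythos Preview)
Your argument is correct and follows the natural route: expand $\sigma_K(e_d+\eps q)$ to second order, isolate the slab where the sign of $x_d$ flips, rescale $x_d=\eps u$, and pass to the limit via hypothesis~\eqref{eq:convergence of quadratic} with $M=0$ and $a=u$. The paper itself does not supply a proof of this proposition; it simply invokes the proof of Proposition~6 in Elsey--Esedo\u{g}lu~\cite{Elsey:Esedoglu}, so there is no in-paper argument to compare against.

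Two minor points deserve tightening. First, the pointwise bound you write,
\[
\bigl|\,|u+q\cdot X|-|u|-\sgn(u)(q\cdot X)\bigr|\,\mathbf{1}_{|u|<|q\cdot X|}\lesssim |q|^2(1+|X|^2),
\]
is actually only $\lesssim |q\cdot X|\leq |q|(1+|X|^2)$; the second factor of $|q\cdot X|$ enters through the length of the $u$-interval, not the integrand itself. This is harmless, since $(1+|X|^2)$ is still the correct weight for invoking~\eqref{eq:convergence of quadratic} at fixed $u$, and the dominated-convergence step in $u$ then uses $p>d+1$ from~\eqref{K:bound} exactly as you indicate. Second, that domination step needs $|K(x)|\leq C(1+|x|^p)^{-1}$, whereas~\eqref{K:bound} is only the one-sided bound $K(x)\leq C(1+|x|^p)^{-1}$; this is automatic for the non-negative kernels of the paper but should be stated. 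Finally, you implicitly assume $D^2\sigma_K$ exists when writing the Taylor expansion; cleaner is to show directly that $\eps^{-2}\bigl(\sigma_K(p+\eps q)-\sigma_K(p)-\eps D\sigma_K(p)\cdot q\bigr)$ converges to the claimed quadratic form, and then infer $\sigma_K\in C^2$ from continuity of the right-hand side in $p$ (which follows from the same hypothesis). With these cosmetic adjustments the proof is complete.
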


If $K$ satisfies the conditions, we can write the anisotropic mean curvature in terms of the kernel
\begin{equation}
\label{anisotropic:curvature:expression}
H_{\sigma_{K}}(x;E_0) = \sum_{i=1}^{d-1} \kappa_i \int_{\R^{d-1}} X_i^2 K(X,0) dX.
\end{equation}
Lastly, \cite{Elsey:Esedoglu} gives the mobility function $\mu_K$: $\Sd \rightarrow \R$ such that
\begin{align*}
\frac{1}{\mu_K(n)} = 2\int_{\{x \cdot n = 0\}} K(x) d\mathcal{H}^{d-1}(x).
\end{align*}
We can represent this in polar coordinates to establish the mobility generating directional distribution
\begin{align*}
B(\theta) \coloneqq 2\int_0^{\infty} r^{d-2}K(r\theta) dr
\end{align*}
on which mobility depends such that
\begin{align*}
\frac{1}{\mu_K(n)} = \int_{ \{ \theta \in \Sd | \theta \cdot n = 0\}} B(\theta) d\mathcal{H}^{d-2}(\theta).
\end{align*}
Then the movement approximated by thresholding only depends on the two directional distributions $A$ and $B$, allowing for infinitely many kernels to approximate the same movement.\\

Having established the connection between the quantities in mean curvature flow and the corresponding ones in the context of thresholding, we can now state the main result of this section: The velocity in the initial thresholding step converges to the initial velocity of the corresponding mean curvature flow uniformly. 
The result holds firstly for two classes of non-negative kernels, either we assume that $K$ is H\"older continuous with compact support, or we assume Lipschitz continuity and some decay at infinity. In the latter case we also need to impose the positivity of the Radon transform.

\begin{theorem}
\label{anis:main}
Let $E_0 \subs \R^{d}$ be a bounded set with $C^{2,\alpha}$-boundary, for some $\alpha \in (0,1]$, and K a suitable convolution kernel in the sense of Definition~\ref{suit:K} satisfying one of the following conditions:
\begin{enumerate}[label=(\Roman*)]
\item $K \in C_c^{0,\alpha}(\R^{d})$,
\item $K \in C^{0,1}(\R^{d})$ and there are positive numbers $C$, $\eps$ and $\delta$, such that for all $x$, $\overline{x} \in \R^{d}$ and $n \in \Sd$
\end{enumerate}
\begin{align*}
|K(x)| &\leq C|x|^{-d-2-\eps},\\
|K(x) - K(\overline{x})| &\leq C\min(|x|, |\overline{x}|)^{-d-3-\delta} |x-\overline{x}|,\\
\frac{1}{\mu_K(n)} &= 2\int_{\{x \cdot n =0\}} K(x) d\mathcal{H}^{d-1}(x) > 0.
\end{align*}
For $E_1^{K,h} = \{x \in \R^{d} | (K_h \ast \chi_{E_0})(x) > 1 /2\}$ the result of a thresholding step with $K_h$ and $\nu(x)$ the outer normal vector of a boundary point $x$, we again define the discrete normal movement function of the surface
\begin{equation*}
z(x) \coloneqq \begin{cases}
\sup\{l \in \R_-| x + l\nu \in E_1^{K,h}\} &\text{ if } x \notin E_1^{K,h}\\
~\inf\{l \in \R_+| x + l\nu \notin E_1^{K,h}\} &\text{ if } x \in E_1^{K,h}.
\end{cases}
\end{equation*}
Then, 
\begin{align}
	\label{eq:initial movement PDE}
	\frac{1}{\mu_K(\nu(x))} z(x) = -h H_{\sigma_{K}}(x;E_0) + \mathcal{O}(h^{1+\alpha/2}).
\end{align}
\end{theorem}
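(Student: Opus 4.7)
The plan is to fix a boundary point $x \in \partial E_0$, normalize coordinates so that $x = 0$ and $\nu(x) = e_d$, and then to solve the defining equation
\[
(K_h \ast \chi_{E_0})(l\,e_d) = 1/2
\]
for the unknown $l = z(x)$ by asymptotic expansion in $h$. The $C^{2,\alpha}$ regularity lets me write $\partial E_0 \cap B_R(0)$ as a graph $y_d = f(Y)$ over the tangent plane, for a radius $R$ depending only on the $C^{2,\alpha}$-norm of $\partial E_0$, with $f(0) = 0$, $\nabla f(0) = 0$, and $D^2 f(0)$ diagonal with entries $-\kappa_i$ in the basis of principal directions $\tau_i$. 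I then decompose
\[
(K_h \ast \chi_{E_0})(l\,e_d) = F_h(l) + D(l) + R(l),
\]
where $F_h(l) = \int_{y_d \geq l} K_h(y)\,dy$ is the tangent-halfspace contribution, $D(l)$ is the signed integral over the ``sliver'' between the tangent plane and the graph inside $B_R$, and $R(l)$ captures the far field $B_R^c$.

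Next, I expand each piece at the natural scale $y = \sqrt{h}\,z$. The halfspace part gives
\[
F_h(l) = \frac{1}{2} - \frac{l}{\sqrt{h}}\int_{\R^{d-1}} K(Y,0)\,dY + \mathcal{O}\bigl((l/\sqrt{h})^{1+\alpha}\bigr),
\]
with the linear coefficient equal to $1/(2\mu_K(\nu))$ by the definition of the mobility. For $D(l)$, the further rescaling $Y = \sqrt{h}\,\tilde Y$, $y_d = \sqrt{h}\,\tilde y_d$ combined with the Taylor expansion
\[
f(\sqrt{h}\,\tilde Y)/\sqrt{h} = \tfrac{\sqrt{h}}{2}\,\tilde Y^T M \tilde Y + \mathcal{O}\bigl(h^{(1+\alpha)/2}|\tilde Y|^{2+\alpha}\bigr)
\]
collapses the inner $\tilde y_d$-integral, and the evenness of $K$ annihilates the odd-parity terms, leaving
\[
D(l) = -\frac{\sqrt{h}}{2}\sum_{i=1}^{d-1}\kappa_i \int_{\R^{d-1}} Y_i^2\, K(Y,0)\,dY + \mathcal{O}(h^{1+\alpha/2}) = -\frac{\sqrt{h}}{2}\,H_{\sigma_K}(x;E_0) + \mathcal{O}(h^{1+\alpha/2}),
\]
recognizing the identity \eqref{anisotropic:curvature:expression}. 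Equating $F_h(z)+D(z)+R(z) = 1/2$ and inverting the linearization (non-degenerate thanks to $1/\mu_K(\nu) > 0$) yields $z/\mu_K(\nu) = -h\,H_{\sigma_K}(x;E_0) + \mathcal{O}(h^{1+\alpha/2})$.

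The main obstacle is the uniform control of $R(l)$ and of the implicit Taylor remainders across the two kernel classes. In case (I), the compact support of $K$ confines integration to $|y| \lesssim \sqrt{h}$, so $R(l) \equiv 0$ for small $h$, and the H\"older exponent of $K$ combines with the $C^{2,\alpha}$ regularity of $\partial E_0$ to deliver the stated error. In case (II), the decay bound $|K(x)| \lesssim |x|^{-d-2-\eps}$ yields $R(l) = \mathcal{O}(h^{1+\eps/2})$, while the Lipschitz difference estimate with rate $\delta$ controls the pointwise replacement $K(\tilde Y, \sqrt{h}\,\tilde y_d) \approx K(\tilde Y, 0)$ that appears in the sliver expansion as well as the shift by $l/\sqrt{h}$; after absorbing these contributions into $\mathcal{O}(h^{1+\alpha/2})$ using that $\eps, \delta > 0$, the same conclusion follows. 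Uniformity over $x \in \partial E_0$ is inherited from the compactness of $\partial E_0$ and the uniform $C^{2,\alpha}$ chart radius.
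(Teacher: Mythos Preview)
Your overall strategy---localize in graph coordinates, split off the half-space contribution, and expand the sliver integral to extract the mobility and curvature terms---matches the paper's. The decomposition into $F_h+D+R$ is equivalent to the paper's splitting into the half-space integral plus $\int_0^{g(X)+z} K(\cdot)\,dy\,dX$ plus far-field, and the identification of the leading terms via \eqref{anisotropic:curvature:expression} is exactly right.

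There is, however, a genuine gap: your error terms depend on the unknown $l=z(x)$ itself, and you never establish an \emph{a priori} bound on $z$ to close the argument. Concretely, your $F_h(l)$ carries an error $\mathcal{O}((l/\sqrt{h})^{1+\alpha})$, and in $D(l)$ the replacement $K(\tilde Y,\tilde y_d - l/\sqrt{h})\approx K(\tilde Y,0)$ introduces an error controlled by $(|g|+|z|)^{1+\alpha}$ (case~(I)) or $(|g|+|z|)^2$ (case~(II)). For these to be $\mathcal{O}(h^{1+\alpha/2})$ after the overall $\sqrt{h}$ rescaling, you already need $z=\mathcal{O}(h)$ in case~(I) and $z=\mathcal{O}(h^{3/4+\eps'})$ in case~(II). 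Appealing to ``inverting the linearization'' does not help, since the linearization is only valid once the error terms are known to be small. The paper breaks this circularity by a separate geometric comparison argument: in case~(I), a ``dome'' construction (Figure~\ref{fig:dome}) comparing with an inscribed ball shows directly that the boundary cannot move further than $\mathcal{O}(h)$; in case~(II), the analogous Lemma~\ref{anis:z:bound} uses the decay of $K$ at infinity and the positive lower bound on $1/\mu_K$ to obtain $z=\mathcal{O}(h^{3/4+\eps'})$. Only after this a~priori step can the expansion be closed.

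A secondary point: in case~(I) the hypothesis $1/\mu_K(\nu)>0$ is \emph{not} assumed, so your ``non-degenerate linearization'' argument is unavailable there. The paper treats the degenerate case $1/\mu_K(\nu)=0$ separately, observing that then $K(X,0)\equiv 0$ on the tangent hyperplane forces $H_{\sigma_K}=0$ as well, so both sides of \eqref{eq:initial movement PDE} vanish.
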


The number $\alpha \in (0,1]$ in the theorem concerns in both instances the degree of H\"older-continuity of functions with compact support. 
For simplicity, we do not distinguish these two numbers as $\alpha$ could simply be taken as the minimum of the two H\"older exponents. 
The result and its proof are related to Proposition 13 in \cite{Elsey:Esedoglu}, where the limit is proven pointwise on smooth surfaces in $\R^3$.\\

The result on initial motion represents the consistency of thresholding as a numerical process. It thus is interesting in its own right. As this result does not rely on the comparison principle, we do not need to assume positivity of the kernel. Conditions on the Fourier transform of the kernel create interest in functions that are partially negative. We can indeed weaken the condition of non-negativity for the initial motion result. This expands our consistency result for the initial time step not only to kernels with minor oscillations around $0$, but most surprisingly even to kernels inverting classical mean curvature flow, which will be discussed in Section~\ref{sec:kernel construction}.

\begin{theorem}
\label{negative:Ker}
Let $E_0 \subs \R^d$ be a bounded set with $C^{2,\alpha}$-boundary for some $\alpha \in (0,1]$. Let $K$ be an even function with $\int_{\R^d}K(x) dx =1$ and for all $\theta \in \mathbb{S}^{d-1}$ and $r>0$
\begin{align*}
\int_0^r s^{d-1}K(s\theta)ds \geq 0.
\end{align*}
Let $K$ further satisfy one of the following conditions:
\begin{enumerate}[label=(\Roman*)]
\item $K \in C_c^{0,\alpha}(\R^d)$, and there is a $\tilde{\theta} \in \mathbb{S}^{d-1}$ such that for all $r>0$
\begin{align*}
\int_0^r s^{d-1}K(s\tilde{\theta})ds >0.
\end{align*}
\item $K \in C^{0,1}(\R^d)$ with $K(0)>0$ and there are positive numbers $C<\infty$ and $\eps, \delta >0$ such that for all $x,\overline{x} \in \R^d$
\begin{align*}
|K(x)| &\leq C|x|^{-d-2-\eps},\\
|K(x)-K(\overline{x})| &\leq C\min(|x|,|\overline{x}|)^{-d-3-\delta}|x-\overline{x}|.
\end{align*}
\end{enumerate}
For $E_1^{K,h} = \{x \in \R^d | (K_h \ast \chi_{E_0})(x) > 1/2\}$ the result of a thresholding step with $K_h$ and $\nu(x)$ the outer unit normal vector of a boundary point $x$, we define the discrete normal movement function of the surface
\begin{equation*}
z(x) \coloneqq \begin{cases}
\sup\{l \in \R_-| x + l\nu \in E_1^{K,h}\}  &\text{if } x \notin E_1^{K,h}\\
~\inf\{l \in \R_+| x + l\nu \notin E_1^{K,h}\} &\text{if } x \in E_1^{K,h}.
\end{cases}
\end{equation*}
Then
\begin{align*}
\frac{1}{\mu_K(\nu(x))} z(x) = -h H_{\sigma_K}(x;E_0) + \mathcal{O}(h^{1+\alpha/2}).
\end{align*}
\end{theorem}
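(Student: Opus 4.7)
My plan is to adapt the proof of Theorem~\ref{anis:main} from Section~\ref{sec:initial step proof}, tracking precisely where the pointwise non-negativity $K \geq 0$ is used and replacing it with the weaker cumulative radial positivity $\int_0^r s^{d-1}K(s\theta)\,ds \geq 0$. The core Taylor expansion that drives the argument does not use the sign of $K$; what it needs is a strictly positive leading coefficient of $z$ in the expansion of $(K_h \ast \chi_{E_0})(x_0 + z\nu) - \tfrac12$ and control of tail integrals of $|K|$, both of which remain available under (I) or (II).

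Concretely, I fix $x_0 \in \partial E_0$ and rotate coordinates so that $\nu(x_0) = e_d$ and $(e_1,\ldots,e_{d-1})$ is the principal curvature frame. Parametrize $\partial E_0$ locally as $\{(X,\eta(X))\}$ with $\eta \in C^{2,\alpha}$, $\eta(0)=0$, $\nabla\eta(0)=0$, and $\mathrm{Hess}\,\eta(0) = -\mathrm{diag}(\kappa_1,\ldots,\kappa_{d-1})$. Substituting $w = (x_0 + z e_d - y)/\sqrt{h}$ rewrites the convolution as $\int_{\{w_d > z/\sqrt{h} - \tilde\eta_h(W)\}} K(w)\,dw$, where $\tilde\eta_h(W) = \eta(-\sqrt{h}\,W)/\sqrt{h} = -\tfrac{\sqrt{h}}{2}\sum_i\kappa_i W_i^2 + \mathcal{O}(h^{(1+\alpha)/2}|W|^{2+\alpha})$. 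Using evenness of $K$ to get $\int_{\{w_d > 0\}}K\,dw = 1/2$ and Taylor-expanding in the small perturbation $z/\sqrt{h} - \tilde\eta_h(W)$ yields
\begin{align*}
(K_h\ast\chi_{E_0})(x_0 + z e_d) - \tfrac12 = -\frac{z}{\sqrt{h}}\int_{\R^{d-1}} K(W,0)\,dW - \frac{\sqrt{h}}{2}\sum_{i=1}^{d-1}\kappa_i\int_{\R^{d-1}} W_i^2 K(W,0)\,dW + R_h,
\end{align*}
with $|R_h|\lesssim h^{(1+\alpha)/2}$. Setting the left-hand side to zero and invoking $1/\mu_K(\nu)=2\int K(W,0)\,dW$ together with $H_{\sigma_K}(x_0;E_0)=\sum_i\kappa_i\int W_i^2K(W,0)\,dW$ yields $z/\mu_K(\nu) = -h\,H_{\sigma_K}(x_0;E_0) + \mathcal{O}(h^{1+\alpha/2})$, which is the claimed identity.

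Two steps use the hypotheses specific to this theorem. First, the balance above is uniquely solvable for $z$ only if $\int K(W,0)\,dW > 0$. In case (II), $K(0)>0$ with Lipschitz continuity makes $K$ positive on a small ball, giving the conclusion at once. In case (I), I pass to polar coordinates in $\{W_d=0\}$ and integrate by parts in the radial variable with $G(r) = \int_0^r s^{d-1}K(s\phi)\,ds$:
\begin{align*}
\int_0^\infty r^{d-2}K(r\phi)\,dr = \int_0^\infty \frac{1}{r^2}\left(\int_0^r s^{d-1}K(s\phi)\,ds\right) dr,
\end{align*}
where the boundary terms vanish because $G(r) = O(r^d)$ near $0$ and $G$ is bounded at infinity (by integrability of $K$). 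The integrand is non-negative by the radial-positivity hypothesis; combined with strict positivity near $\tilde\theta$ (and, by evenness, near $-\tilde\theta$) and continuity of $K$, this supplies the needed strict positivity on every cross-section whose equatorial $(d-2)$-sphere meets a neighborhood of $\pm\tilde\theta$. Second, the tail contributions to $R_h$ involve only $|K|$, so they follow from the decay bounds in (I)/(II) just as in Theorem~\ref{anis:main}, splitting near and far from the origin and using the $C^{0,\alpha}$ or $C^{0,1}$ regularity together with the polynomial decay.

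The main obstacle is making the strict positivity of $\int K(W,0)\,dW$ and the remainder bound uniform over all boundary normals $\nu(x_0)$, $x_0 \in \partial E_0$. Compactness of $\partial E_0$ delivers uniform $C^{2,\alpha}$ control on the graph parametrizations, hence a uniform $R_h$; for the positivity, a covering argument using the small ball around $0$ in case (II), or the neighborhoods of $\pm\tilde\theta$ together with the geometry of equatorial $(d-2)$-spheres in case (I), produces a uniform lower bound $\inf_{x_0 \in \partial E_0} 1/\mu_K(\nu(x_0)) > 0$. This uniform non-degeneracy is precisely what allows the $\mathcal{O}(h^{1+\alpha/2})$ asymptotic to be uniform in $x_0$, concluding the proof.
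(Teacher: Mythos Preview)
Your Taylor expansion and identification of the main terms are correct, and you rightly observe that the expansion itself does not need $K\geq 0$. However, you have misidentified where the sign hypothesis enters, and this creates a genuine gap.

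The paper does \emph{not} use the radial positivity $\int_0^r s^{d-1}K(s\theta)\,ds\geq 0$ to prove $1/\mu_K(\nu)>0$. It uses it to establish an \emph{a~priori} bound on $z$ itself: $z\in\mathcal{O}(h)$ in case~(I) and $z\in\mathcal{O}(h^{3/4+\eps'})$ in case~(II). This bound is indispensable, because your remainder $R_h$ depends on $z$: it comes from Taylor-expanding in the quantity $z/\sqrt{h}-\tilde\eta_h(W)$, and you cannot assert $|R_h|\lesssim h^{(1+\alpha)/2}$ unless you already know that $z/\sqrt{h}$ is small. Concretely, the $z$-contribution to the error is of order $h^{-\alpha/2}|z|^{1+\alpha}$ (case~I) or $h^{-1/2}|z|^2$ (case~II), and neither is controlled without the a~priori estimate. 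The paper obtains this estimate by a geometric comparison: it places the origin at distance $\sim h$ (resp.\ $\sim h^{3/4+\eps'}$) inside $E_0$ along the normal, observes that $E_0\cap\{y\geq 0\}\cap\mathrm{supp}(K_h)$ is star-shaped with respect to $0$, writes the integral over this region in polar coordinates, and invokes the radial positivity (with strictness along $\tilde\theta$, or near $0$ in case~(II)) to conclude that $(K_h\ast\chi_{E_0})(0)>1/2$. Your proposal contains no analogue of this step.

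Separately, your argument for $1/\mu_K(\nu)>0$ has its own gap. The integration-by-parts identity $\int_0^\infty r^{d-2}K(r\phi)\,dr=\int_0^\infty r^{-2}\bigl(\int_0^r s^{d-1}K(s\phi)\,ds\bigr)\,dr\geq 0$ is fine, but strictness requires the equatorial sphere $\{\phi\in\mathbb{S}^{d-1}:\phi\cdot\nu=0\}$ to contain a direction near $\pm\tilde\theta$. If $\nu=\tilde\theta$ this fails (the equator is orthogonal to $\tilde\theta$), and in $d=2$ the equator is just two antipodal points which generically miss any neighborhood of $\tilde\theta$. So your covering argument cannot yield a uniform positive lower bound on $1/\mu_K$ over all normals. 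In fact the paper never claims $1/\mu_K>0$ in case~(I); when $1/\mu_K(\nu)=0$ the identity reduces to $0=\mathcal{O}(h^{1+\alpha/2})$ since then also $H_{\sigma_K}=0$, but this reduction still presupposes the a~priori bound on $z$ to control the error term.
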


We should note, that while there is no kernel satisfying the given conditions that generates negative mobility, strangely enough, there are such kernels that generate negative surface tensions. In Subsection~\ref{subsec:backwards-in-time}, we discuss kernels that generate the ``surface tension''
\begin{align*}
\sigma_K = -|\cdot|
\end{align*}
and hence lead to backwards-in-time mean curvature flow. While our consistency of the present section remains true in that case, the scheme will be unstable.\\

Several times in the next chapter we will need to bound the adjusted $K_h$-perimeter. We conclude the chapter by bounding it by its limit anisotropic perimeter. This extends a result by Esedo\u{g}lu and Otto \cite{Esedoglu:Otto} for a class of rotation invariant kernels.

\begin{prop}\label{prop:monotonicity of energies}
Let $K$ be a kernel satisfying the conditions of Proposition~\ref{anis:per} with
\begin{align*}
\int_0^{\infty} r^d K(r\theta)dr \geq 0
\end{align*}
for all $\theta \in \Sd$ and $E$ be a set of finite perimeter. Then, for all $h>0$
\begin{align*}
P_{K,h}(E) \leq P_{\sigma_K}(E).
\end{align*}
\end{prop}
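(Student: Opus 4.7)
The plan is to rewrite both quantities in a common form involving the $L^1$-modulus of continuity of $\chi_E$, and then exploit the BV bound together with the radial moment condition on $K$.

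First, I would apply the identity of Lemma~\ref{alt:per} to $K_h$; although that lemma is stated for suitable (non-negative) kernels, its derivation is purely algebraic and extends verbatim to signed $K$. After the substitution $z=\sqrt{h}w$ this yields
\[
P_{K,h}(E)=\frac{1}{2\sqrt{h}}\int_{\Rd}K(w)\,\psi_w(\sqrt{h})\,dw,\qquad\psi_w(s):=\int_{\Rd}\bigl|\chi_E(x)-\chi_E(x-sw)\bigr|\,dx.
\]
In parallel, unfolding $\sigma_K(\nu)=\tfrac{1}{2}\int|\nu\cdot x|K(x)\,dx$ and applying Fubini gives
\[
P_{\sigma_K}(E)=\frac{1}{2}\int_{\Rd}K(w)\,D_w(E)\,dw,\qquad D_w(E):=\int_{\partial^*E}|\nu\cdot w|\,d\mathcal H^{d-1}.
\]

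Second, the classical BV estimate $\psi_w(\sqrt h)\le\sqrt h\,D_w(E)$ holds pointwise in $w$ for any set of finite perimeter and combines the two representations into
\[
P_{\sigma_K}(E)-P_{K,h}(E)=\frac{1}{2\sqrt h}\int_{\Rd}K(w)\bigl[\sqrt h\,D_w(E)-\psi_w(\sqrt h)\bigr]\,dw,
\]
whose bracket is pointwise non-negative. If $K$ were pointwise non-negative we would be done immediately; the whole difficulty is to accommodate the weaker radial hypothesis $\int_0^\infty r^d K(r\theta)\,dr\ge0$.

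To attack the signed case, I would pass to polar coordinates $w=r\theta$, use the scaling $\psi_{r\theta}(s)=\psi_\theta(rs)$ and $D_{r\theta}(E)=r\,c(\theta)$ with $c(\theta):=\int_{\partial^*E}|\nu\cdot\theta|\,d\mathcal H^{d-1}$, and rewrite the difference as
\[
\frac{1}{2}\int_{\Sd}\int_0^\infty K(r\theta)\,r^{d-1}\,\Bigl[rc(\theta)-\tfrac{1}{\sqrt h}\psi_\theta(\sqrt h r)\Bigr]\,dr\,d\theta.
\]
Setting $U_\theta(s):=sc(\theta)-\psi_\theta(s)$, the bracket equals $U_\theta(\sqrt h r)/\sqrt h$, and $U_\theta$ is non-negative, vanishes at $0$, and is non-decreasing because the subadditivity of $\psi_\theta$ yields $\psi_\theta'\le c(\theta)$. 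The hard step is to show that the inner radial integral is non-negative for each $\theta$. My plan is to represent $U_\theta(\sqrt h r)$ as the integral of $\sqrt h\,U_\theta'(\sqrt h\,\cdot)$ on $[0,r]$, swap the order of integration, and then integrate by parts against the primitive $F_\theta(r):=\int_0^r K(s\theta)s^d\,ds$; the hypothesis is $F_\theta(\infty)=A(\theta)\ge0$. Reconciling the two different radial weightings $r^{d-1}$ and $r^d$ appearing in the integrand and in the hypothesis, respectively, is the main technical obstacle.
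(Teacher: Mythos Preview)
Your approach coincides with the paper's: both rewrite $P_{K,h}(E)$ and $P_{\sigma_K}(E)$ as integrals in the outer variable $w$ (or $y$) against the directional quantities $\tfrac{1}{\sqrt h}\psi_\theta(\sqrt h\,|w|)$ and $|w|\,c(\theta)$, and then appeal to the pointwise BV bound $\psi_\theta(s)\le s\,c(\theta)$. The paper proves this directional inequality for smooth bounded $E$ by a slicing argument and then passes to general finite-perimeter sets by approximation; you simply cite it as classical. For $K\ge 0$ you are both done at this point.

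There is a subtlety you have seen more clearly than the paper. The paper writes
\[
P_{K,h}(E)=\int_{\Sd}\Bigl(\int_0^\infty r^d K(r\theta)\,dr\Bigr)\,\frac{1}{\sqrt h}\int_{E^c}\chi_E(x-\sqrt h\,\theta)\,dx\,d\mathcal H^{d-1}(\theta),
\]
and then invokes the hypothesis $\int_0^\infty r^dK(r\theta)\,dr\ge0$ to reduce to the directional inequality. But the correct polar-coordinate expression carries $r^{d-1}K(r\theta)$ together with $\chi_E(x-\sqrt h\,r\theta)$, so the inner integral still depends on $r$ and does not factor. Hence the paper's ``it is enough to show'' step, as written, is only justified when $K\ge0$ pointwise (in which case the pointwise BV bound already suffices). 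In the paper's application (Theorem~\ref{energy}) the proposition is invoked only for non-negative $K$, so this gap is harmless there.

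Your integration-by-parts plan for signed $K$ does not close either. After one integration by parts against $F_\theta(r)=\int_0^r s^dK(s\theta)\,ds$ you obtain the favourable boundary term $\sqrt h\,A(\theta)c(\theta)\ge0$ and the bulk term $-\int_0^\infty F_\theta(r)\,\partial_r\bigl(U_\theta(\sqrt h r)/r\bigr)\,dr$. Controlling the latter requires either $F_\theta(r)\ge0$ for all $r$ (a strictly stronger hypothesis than $F_\theta(\infty)\ge0$) or monotonicity of $r\mapsto U_\theta(\sqrt h r)/r$, which amounts to monotonicity of $s\mapsto\psi_\theta(s)/s$. The second fails already for $E$ a union of two disjoint balls aligned with $\theta$: as the translate of one ball slides over the other, $\psi_\theta(s)$ dips, and $\psi_\theta(s)/s$ is not monotone. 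So under the stated radial hypothesis the signed case remains open; the clean options are to restrict to $K\ge0$ (which is all that is actually used) or to assume the stronger condition $\int_0^r s^dK(s\theta)\,ds\ge0$ for all $r>0$ and argue differently.
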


\section{Convergence}
\label{sec:convergence}
The uniform convergence of the initial movement derived in Section~\ref{sec:initial step} implies that strictly mean convex domains contract. By Section~\ref{sec:contraction}, we know that such contraction is maintained by further steps of the process. This will allow us to prove the strong convergence of the scheme as stated in the central result of this section, Theorem~\ref{energy}. We first collect the precise information gained, and conclude by proving the desired convergence results.\\

First we define the arrival time function (see Figure~\ref{fig:arrival time}) and the piecewise constant interpolation of our discrete scheme, which will approximate mean curvature flow.
\begin{definition}
Let $h>0$ and $(E_k^h)_{k \in \mathbb{N}}$ be a result of the thresholding scheme. Then $u_h$: $\overline{E_0}\rightarrow \R_{\geq 0}$, with
\begin{align*}
u_h \coloneqq h\sum_{k \in \mathbb{N}_{0}} \chi_{E_k^h} 
\end{align*}
is the arrival time function, and
\begin{align*}
E_h(t) \coloneqq E_k^h \text{ , for } t \in h[k,k+1)
\end{align*}
is the piecewise constant in time interpolation of our sets $E_k^h$.
\end{definition}

We notice that thresholding is, with perfect generality, submultiplicative and superadditive.
\begin{lemma}
\label{intersection}
For arbitrary measurable sets $A_1, A_2 \subs \R^{d}$ and $K$ a suitable convolution kernel in the sense of Definition~\ref{suit:K}, the following statements are true:
\begin{align*}
(1) \;T_K(A_1 \cap A_2) &\subs T_KA_1 \cap T_KA_2,\\
(2) \;T_K(A_1 \cup A_2) &\sups T_KA_1 \cup T_KA_2.
\end{align*}
\end{lemma}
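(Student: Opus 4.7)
The plan is to reduce both inclusions to a one-line monotonicity argument for the convolution operator associated with a non-negative kernel. The key observation is that whenever $B \subseteq B'$, one has $\chi_B \leq \chi_{B'}$ pointwise, and because $K \geq 0$ the convolution preserves this order, so $(K \ast \chi_B)(x) \leq (K \ast \chi_{B'})(x)$ for every $x$. Feeding this into the level-set definition \eqref{eq:def_thresholding} of $T_K$ then transports any inclusion from the arguments to their thresholded images: if $(K \ast \chi_B)(x) > 1/2$, then $(K \ast \chi_{B'})(x) > 1/2$ as well, so $T_K B \subseteq T_K B'$.

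With this monotonicity in hand, statement (1) follows by applying it twice, to $A_1 \cap A_2 \subseteq A_1$ and to $A_1 \cap A_2 \subseteq A_2$, yielding $T_K(A_1 \cap A_2) \subseteq T_K A_i$ for $i=1,2$, and hence $T_K(A_1 \cap A_2) \subseteq T_K A_1 \cap T_K A_2$. Statement (2) is handled by the symmetric choice $A_i \subseteq A_1 \cup A_2$, which gives $T_K A_i \subseteq T_K(A_1 \cup A_2)$; taking the union over $i$ produces $T_K A_1 \cup T_K A_2 \subseteq T_K(A_1 \cup A_2)$.

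I do not expect any genuine obstacle. The only point worth flagging is that $T_K$ is strictly defined on equivalence classes, and one should work with the canonical representative $K \ast \chi_E$ of the convolution so that the strict inequality $> 1/2$ is interpreted pointwise; this is automatic for $K \in L^1$, and the extension to non-negative measure-valued kernels alluded to in the remark after Definition~\ref{suit:K} is handled in exactly the same way, since the pointwise inequality $\chi_B \leq \chi_{B'}$ passes through integration against any non-negative measure. No use of the symmetry $K(-x)=K(x)$ or of any regularity of $\partial A_i$ enters, which is consistent with the lemma being asserted for arbitrary measurable sets.
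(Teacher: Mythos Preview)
Your proof is correct and follows essentially the same approach as the paper: both arguments hinge on the monotonicity of $T_K$ under set inclusion, which in turn comes from $K \geq 0$. The paper writes out the convolution integrals explicitly rather than isolating the monotonicity principle first, but the logic is identical.
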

\begin{proof}
(1) Remembering the definition of thresholding $x \in T_K(A_1 \cap A_2)$ is equivalent to
\begin{align*}
\int_{A_1 \cap A_2} K(x-y) dy > 1/2.
\end{align*}
But $A_1$ and $A_2$ are supersets of $A_1 \cap A_2$ and with $K$ non-negative we obtain for $i=1,2$
\begin{align*}
\int_{A_i} K(x-y) dy > 1/2.
\end{align*}
This immediately implies $x \in T_KA_1 \cap T_KA_2$.\\
(2) Let now $x \in T_KA_1 \cup T_KA_2$ and w.l.o.g.\  $x \in T_KA_1$. Then $A_1 \subs A_1 \cup A_2$ implies $x \in T_K(A_1 \cup A_2)$.
\end{proof}

%thresholding on set intersection
\begin{figure}
\centering
\begin{tikzpicture}
\filldraw[gray!10] (-0.5, 1.9365) arc (104.48: 255.52: 2) -- (-0.5, -1.9365) arc (-75.52: 75.52: 2);

\filldraw[gray!20] (-0.5, 1.308) arc (110.92: 249.08: 1.4) -- (-0.5, -1.308) arc (-69.08: 69.08: 1.4);

\draw [blue!40, thick] (-1,0) circle (2);

\draw [red!40, thick] (0,0) circle (2);

\node [above right=1pt of {(-3,0)}] {$A_1$};

\node [above left=0pt of {(2,0)}] {$A_2$};

\draw [blue!60, thick] (-1,0) circle (1.4);

\draw [red!60, thick] (0,0) circle (1.4);

\node [below = 4pt of {(-.5,2)}] {$E_0$};

\filldraw[gray!30] (-.5,0) ellipse (.8 and 1);

\draw[gray!70] (-.5,0) ellipse (.8 and 1);

\node [below = 0pt of {(-0.5,0.36)}] {$E_1$};
\end{tikzpicture}
\caption{Thresholding on intersection of mean convex sets}
\label{fig:intersection}
\end{figure}
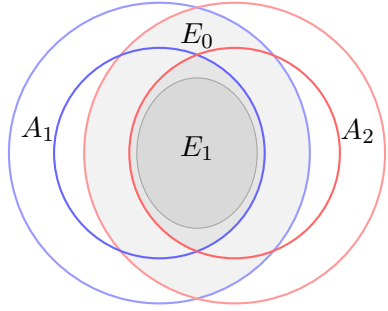

We remember the definition of the adjusted $K_h$-perimeter
\begin{align*}
P_{K,h}(E) = \frac{1}{\sqrt{h}}P_{K_h}(E)
\end{align*}
and define analogously the self-interaction
\begin{align*}
\mathcal{S}_{K,h}(E) \coloneqq \frac{1}{\sqrt{h}} \mathcal{S}_{K_h}(E) = h^{-(d+1)/2} \int_E K \left( \frac{\cdot}{\sqrt{h}} \right) \ast \chi_E \,dx.
\end{align*}
We consider the property of strict mean convexity and collect its implications.

\begin{theorem}
\label{collect}
Let $N \in \mathbb{N}$, $\alpha > 0$ and $(A_l)_{l=1}^N$ a finite family of bounded sets of class $C^{2,\alpha}$. Let $K$ be a convolution kernel satisfying the conditions of Theorem~\ref{anis:main}, inducing a bounded mobility function $\mu_K$ and a surface tension function $\sigma$ and for all $l$ and $x \in \partial A_l$, $H_{\sigma}(x;A_l) > 0$. Let further 
\begin{align*}
E_0 \coloneqq \bigcap_{l=1}^N A_l,
\end{align*} 
and $E_k^h \coloneqq T_{K_h}^kE_0$ the result of $k$ thresholding steps on $E_0$. Then, there are positive numbers $h_0$, $w$ and $T_0$, such that for all $h \in (0,h_0)$ and all $T \geq T_0$
\begin{align*}
E_{k+1}^h &\subs E_k^h,\\
\dist(E_{k+1}^h, \partial E_k^h) &\geq wh, \\
E_{\left \lfloor{T/h}\right \rfloor}^h &= \varnothing.
\end{align*} 
Subsequently, any countable subset of the family of arrival time functions $\{u_h\}_{h\in (0,h_0)}$, with indices converging to zero, is relatively compact in the $C^0$-topology and any accumulation point is in $C^{0,1}(\overline{E_0},\R_{\geq 0})$. Furthermore
\begin{align*}
\int_0^{\infty} P_{K,h}(E_h(t)) dt = h \sum_{k=0}^{\infty} P_{K,h}(E_k^h) \leq T_0P_{K,h}(E_0) \leq 2CT_0\sum_{l=1}^NP(A_l),
\end{align*}
with $P$ the standard perimeter functional, see \eqref{eq:standardperimeter}.
\end{theorem}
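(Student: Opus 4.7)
The plan is to combine the initial-step consistency of Theorem~\ref{anis:main} with the monotone speed result of Theorem~\ref{mon:vel} to first establish contraction and a uniform lower bound on the shrinking speed, from which all three conclusions will follow. First I would apply Theorem~\ref{anis:main} to each $A_l$ separately: since each $A_l$ is bounded and of class $C^{2,\alpha}$ and $H_\sigma(\,\cdot\,;A_l)>0$ on the compact boundary $\partial A_l$, there exists $H_{\min}>0$ bounding $H_\sigma$ from below uniformly on $\bigcup_l \partial A_l$. Together with the bounded, strictly positive mobility $\mu_K$, Theorem~\ref{anis:main} produces uniform constants $c_0, h_0>0$ such that $T_{K_h}A_l \subseteq A_l$ and $\dist(T_{K_h}A_l, \partial A_l) \geq c_0 h$ for every $l$ and every $h \in (0, h_0)$. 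The submultiplicativity of Lemma~\ref{intersection}(1) gives $E_1^h \subseteq \bigcap_l T_{K_h}A_l$, and since $\partial E_0 \subseteq \bigcup_l \partial A_l$, this yields $E_1^h \subseteq E_0$ with $\dist(E_1^h, \partial E_0) \geq c_0 h$. Setting $w = c_0$, Theorem~\ref{mon:vel} then propagates both properties inductively: $E_{k+1}^h \subseteq E_k^h$ and $\dist(E_{k+1}^h, \partial E_k^h) \geq w h$ for every $k \geq 0$.

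Next, iterating the distance bound gives finite extinction and a Lipschitz estimate for $u_h$. Specifically, $x \in E_k^h$ implies $B_{wh}(x) \subseteq E_{k-1}^h$, hence by induction $B_{kwh}(x) \subseteq E_0$, so $E_k^h = \varnothing$ as soon as $kwh > \diam(E_0)$; it suffices to take any $T_0 > \diam(E_0)/w$. The same reasoning shows that if $x \in E_k^h$ and $|y-x| < jwh$, then $y \in E_{k-j}^h$, which forces $|u_h(x) - u_h(y)| \leq |x-y|/w$ with a constant independent of $h$. Combined with the uniform bound $u_h \leq T_0$ and the compactness of $\overline{E_0}$, Ascoli--Arzelà delivers the claimed $C^0$ relative compactness and the Lipschitz regularity of any accumulation point.

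For the energy estimate I would use that the contraction $E_{k+1}^h \subseteq E_k^h$, by Theorem~\ref{contraction}, is equivalent to $E_k^h$ being locally $K_h$-outward minimizing. Corollary~\ref{des:ener} then yields the monotonicity $P_{K_h}(E_{k+1}^h) \leq P_{K_h}(E_k^h)$, hence $P_{K,h}(E_{k+1}^h) \leq P_{K,h}(E_k^h)$, and summing over the at most $\lfloor T_0/h\rfloor$ non-trivial terms gives
\[
h \sum_{k=0}^\infty P_{K,h}(E_k^h) \leq T_0\, P_{K,h}(E_0).
\]
For the remaining inequality, Proposition~\ref{prop:monotonicity of energies} applies (its radial-moment hypothesis is automatic for $K \geq 0$) to give $P_{K,h}(E_0) \leq P_{\sigma_K}(E_0)$; since $\sigma_K$ is continuous on $\mathbb{S}^{d-1}$ it is bounded by some constant, and the standard subadditivity $P(\bigcap_l A_l) \leq \sum_l P(A_l)$ for finite intersections of sets of finite perimeter absorbs the remaining factor $2C$.

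The main obstacle is the very first step: extracting a uniform-in-$h$ shrinking speed for the generally non-smooth intersection $E_0$ from information we only have on the smooth pieces $A_l$. Theorem~\ref{anis:main} does not apply to $E_0$ directly, and convex corners on $\partial E_0$ would obstruct any direct PDE-style argument; Lemma~\ref{intersection}(1), which depends crucially on the nonnegativity of the kernel, is exactly what allows us to sidestep this by pushing the pointwise comparison onto each $A_l$ separately. Once that step is made, everything else is a clean combination of the monotonicity and propagation results of Sections~\ref{sec:properties} and~\ref{sec:contraction} with a routine Ascoli--Arzelà argument.
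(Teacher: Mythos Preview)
Your strategy matches the paper's almost step for step: consistency on each smooth piece $A_l$ via Theorem~\ref{anis:main}, then Lemma~\ref{intersection}(1) to pass to the intersection, then Theorem~\ref{mon:vel} to propagate, then Corollary~\ref{des:ener} for energy monotonicity. The only substantive difference is in the final perimeter bound: you go through Proposition~\ref{prop:monotonicity of energies} and $P_{\sigma_K}$, whereas the paper estimates $P_{K,h}(E_0)$ directly via the elementary bound $\frac{1}{\sqrt{h}}\int \chi_E(x-\sqrt{h}y)\chi_{E^c}(x)\,dx \leq |y|\,P(E)$ (their \eqref{eq:weak perimeter bound}). Both routes work.

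There is one genuine gap in your compactness step. The functions $u_h$ are step functions, so the Lipschitz inequality $|u_h(x)-u_h(y)|\leq |x-y|/w$ you claim is simply false: across a jump, the left side equals $h$ while $|x-y|$ can be arbitrarily small. Your own induction only gives $|u_h(x)-u_h(y)|\leq |x-y|/w + h$ (if $x\in E_k^h$ and $y\notin E_{m+1}^h$ with $k\geq m$, the ball inclusion yields $|x-y|\geq (k-m-1)wh$, not $(k-m)wh$). Consequently the classical Ascoli--Arzel\`a theorem does not apply. The paper handles this by invoking a generalized compactness result for not-necessarily-continuous functions satisfying such a ``large-scale Lipschitz'' condition $|u_h(x)-u_h(y)|\leq C|x-y|+h$, and then checks separately that any $C^0$ limit inherits a genuine Lipschitz bound. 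You should do the same, or else mollify $u_h$ at scale $h$ before passing to the limit.
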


\begin{proof}
We first want to show that the intersection $E_0$ contracts by some positive distance. By Theorem~\ref{anis:main}, for any  $l$ and all points $x$ on the boundary of $A_l$, the normal distance to $\partial T_{K_h}A_l$ divided by $h$ converges uniformly to $-\mu_K(x;A_l)H_{\sigma}(x;A_l)$. Since $A_l$ has compact boundary, there is a $h_0^l>0$, such that for all $h \in (0,h_0^l)$, $T_{K_h}A_l \subs A_l$ and for some positive constant $w_l$ and $\eps>0$
\begin{align*}
\dist(T_{K_h}A_l, A_l^c) \geq h\, (1-\eps) \underset{x}{\min}(H_{\sigma}(x;A_l)) \geq w_lh.
\end{align*}
Thus for $h_0 \coloneqq \min_l(h_0^l)$, $h \in (0,h_0)$ and $w \coloneqq \min_l(w_l)$,
\begin{align*}
\underset{l}{\min}(\dist(T_{K_h}A_l, A_l^c)) \geq wh.
\end{align*}
We can also write, that for all $l$, if $x \in T_{K_h}A_l$ and $y \in B_{wh}(0)$, then $x+y \in A_l$. But this implies that,
\begin{align*}
\text{if } x \in \bigcap_{l=1}^N T_{K_h}A_l \text{ and } y \in B_{wh}(0) \text{, then } x + y \in \bigcap_{l=1}^N A_l.
\end{align*}
This again is equivalent to
\begin{align*}
\dist \left(\bigcap_{l=1}^N T_{K_h}A_l, \left(\bigcap_{l=1}^N A_l\right)^c \right) \geq wh,
\end{align*}
and by Lemma~\ref{intersection}, we know 
\begin{align*}
T_{K_h}\left( \bigcap_{l=1}^N A_l \right) \subs \bigcap_{l=1}^N T_{K_h}A_l.
\end{align*}
Monotonicity of the distance function under "$\subs$" allows us to conclude the proof of initial movement
\begin{align*}
\dist(E_1^h, \partial E_0) &= \dist \left( T_{K_h}\bigcap_{l=1}^N A_l, \left(\bigcap_{l=1}^N A_l\right)^c \right)\\
&\geq \dist \left(\bigcap_{l=1}^N T_{K_h}A_l, \left(\bigcap_{l=1}^N A_l\right)^c \right) \geq wh.
\end{align*}
This means that these intersections of sets move further inwards than the sets from which they were constructed, which is visualized in Figure~\ref{fig:intersection}. Then, by Theorem~\ref{mon:vel}, for all $k \in \mathbb{N}$, we have $E_{k+1}^h \subs E_k^h$ and
\begin{align*}
\dist(E_{k+1}^h, \partial E_k^h) \geq wh.
\end{align*}
We now want to prove the existence of an upper time-bound, for $E_h(t) \neq \varnothing$. We set
\begin{align*}
T_0 \coloneqq \frac{\diam(E_0)}{w}
\end{align*}
and monitor the distance between the original set and the result of $\left \lfloor{T_0/h}\right \rfloor$ thresholding steps:
\begin{align*}
\dist(E_{\left \lfloor{T_0/h}\right \rfloor}^h, \partial E_0) &\geq \sum_{i=1}^{\left \lfloor{T_0/h}\right \rfloor} \dist(\partial E_i^h, \partial E_{i-1}^h)\\
&\geq \left \lfloor{\diam(E_0)/(wh)}\right \rfloor wh \geq (2/3)\diam(E_0).
\end{align*}
Since $E_{\left \lfloor{T_0/h}\right \rfloor}^h \subs E_0$, this implies $E_{\left \lfloor{T/h}\right \rfloor}^h = \varnothing$ for all $T>T_0$. \\

Next, we want to find a bound of the time-integral of the $K$-perimeter of the approximated process. Since $P_{K,h}(\varnothing)=0$, the last result implies
\begin{align*}
\int_0^{\infty} P_{K,h}(E_h(t)) dt = h\sum_{k=0}^{\infty} P_{K,h}(E_k^h) = h \sum_{k=0}^{\lfloor T_0/h \rfloor} P_{K,h}(E_k^h).
\end{align*}
It remains to show that $P_{K,h}(E_k^h)$ is decreasing in $k$. For all $k \in \mathbb{N}$, $E_{k+1}^h \subs E_k^h$ implies by Theorem~\ref{contraction} that $E_k^h$ is locally $K_h$-outward minimizing. Then, by Corollary~\ref{des:ener},
\begin{align*}
P_{K,h}(E_{k+1}^h) \leq P_{K,h}(E_k^h) - \mathcal{S}_{K,h}(E_k^h \setm E_{k+1}^h)
\end{align*}
and, since $\mathcal{S}_{K,h}$ is non-negative, for all $k$ 
\begin{align*}
P_{K,h}(E_k^h) \leq P_{K,h}(E_0).
\end{align*}
Then we can estimate
\begin{align*}
\int_0^{\infty} P_{K,h}(E_h(t)) dt = h \sum_{k=0}^{\lfloor T_0/h \rfloor} P_{K,h}(E_k^h) \leq T_0 P_{K,h}(E_0).
\end{align*}

%Arrival time function
%\begin{figure}
%\centering
%\begin{tikzpicture}

%\draw (-4,0) -- (4,0) -- (4,.4) -- (3.4,.4) -- (3.4,.8) -- (2.7,.8) -- (2.7,1.2) -- (1.9,1.2) -- (1.9,1.6) -- (.9,1.6) -- (.9,2) -- (-.6,2) -- (-.6,1.6) -- (-1.6,1.6) -- (-1.6,1.2) -- (-2.5,1.2) -- (-2.5,.8) -- (-3.3,.8) -- (-3.3,.4) -- (-4,.4) -- cycle;

%\node [below = 0pt of {(0,0)}] {$E_0$};

%\node [above right = -2pt of {(4,0)}] {$h$};

%\node [below = -2pt of {(0,2)}] {$E_4^h$};
%\end{tikzpicture}
%\caption{Discrete arrival time function}
%\label{fig:arrival time}
%\end{figure}

\begin{figure}
	\pgfmathsetmacro {\nb}{6}
	\begin{tikzpicture}[scale=1.5]
		\draw[->] (-2.5,0) -- (2.5,0) node[below]{$x$};
		\draw (-2.2,{1/\nb})--(-2.3,{1/\nb}) node[left]{$h$};
		\draw[->] (-2.25,-.25) -- (-2.25,2.5) node[left]{$u_h(x)$};
		\draw [domain=-2:2,variable=\x,samples=500] plot ({\x},{floor(\nb*2.3*(.9-1/4*.9*\x*\x))/\nb});
	\end{tikzpicture}
	\caption{Discrete arrival time function}
	\label{fig:arrival time}
\end{figure}
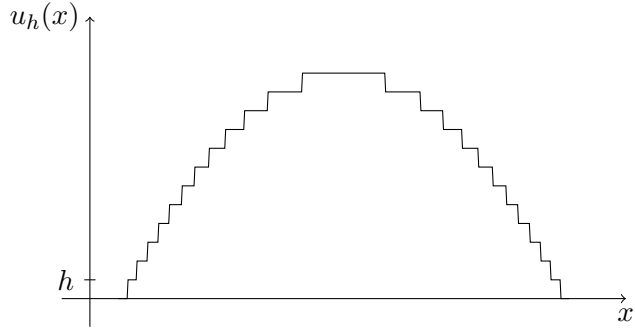

It remains to show that the $K$-perimeter of the initial domain is uniformly bounded. By \eqref{eq:weak perimeter bound} and since $K \geq 0$:
\begin{align*}
P_{K,h}(E_0) &= \frac{1}{\sqrt{h}} \int_{\Rd} K_h(y) \int_{\Rd} \chi_{E_0}(x-y) \chi_{E_0^c}(x) dx dy\\
&= \int_{\Rd} K(y) \frac{1}{\sqrt{h}} \int_{\Rd} \chi_{E_0}(x - \sqrt{h}y) \chi_{E_0^c}(x) dx dy\\
&\leq \int_{\Rd} K(y) |y| P(E_0) dy \leq C(K) P(E_0) \leq C(K) \sum_{l=1}^N P(A_l) < \infty.
\end{align*}
Next we want to show that the family of arrival time functions is relatively compact. To prove this, we use a generalization of the Arzelà-Ascoli theorem for non-continuous functions \cite{Dro:Eym}. We know that $(\overline{E_0},|\cdot - \cdot|)$ is a compact metric space and $(\R_{\geq 0},|\cdot - \cdot|)$ is a complete metric space. We need to show that there exists a number C, such that for all $h \in (0,h_0)$ and $x, \overline{x} \in \R^{d}$
\begin{align*}
|u_h(x) - u_h(\overline{x})| \leq C|x - \overline{x}| + h.
\end{align*}
That is $u_h$, having the structure described in Figure~\ref{fig:arrival time}, satisfies a large scale Lipschitz condition. All values of $u_h$ are multiples of $h$. Let w.l.o.g.\  $u_h(x) = nh$ and $u_h(\overline{x}) = (m+n)h$ for $m,n \in \mathbb{N}$. Then 
\begin{align*}
|u_h(x) - u_h(\overline{x})| = mh
\end{align*}
and $x \in E_n^h \setm E_{n+1}^h$, $\overline{x} \in E_{n+m}^h \setm E_{n+m+1}^h$. Since $E_{k+1}^h \subs E_k^h$ for all $k$, any line from $x$ to $\overline{x}$ has to cross all intermediate boundaries and 
\begin{align*}
|x - \overline{x}| \geq \dist(\partial E_{n+1}^h, \partial E_{n+m}^h) \geq \sum_{l=n+1}^{n+m-1} \dist(\partial E_l^h, \partial E_{l+1}^h) \geq (m-1)wh.
\end{align*}
Setting $C = 1/w$ we obtain
\begin{align*}
\frac{1}{w}|x - \overline{x}| + h \geq (m - 1)h + h = mh = |u_h(x) - u_h(\overline{x})|.
\end{align*}
Thus any countable subset with indices converging to zero is relatively compact in the $C^0$-topology and any accumulation point is a continuous function. Let $u$ be the limit of such a sequence $u_{h_k}$. Due to the uniform convergence for any two points $x$ and $\overline{x}$ we can find a number $k$, such that
\begin{align*}
|u_{h_k}(x) - u(x)| + |u_{h_k}(\overline{x}) - u(\overline{x})| < \frac{|x-\overline{x}|}{w} - h_k.
\end{align*} 
Then we obtain
\begin{align*}
|u(x) - u(\overline{x})| &\leq |u_{h_k}(x) - u(x)| + |u_{h_k}(x) - u_{h_k}(\overline{x})| + |u_{h_k}(\overline{x}) - u(\overline{x})| \\
&\leq \frac{2}{w}|x-\overline{x}|.
\end{align*}
Thus $u$ is Lipschitz continuous.
\end{proof}

We now can state the main convergence result of this paper.

\begin{theorem}
\label{energy}
Let $K$ satisfy the conditions of Theorem~\ref{anis:main}, inducing a bounded mobility function $\mu_K$ and let $E_0$ satisfy the conditions of Theorem~\ref{collect}. Then there exists a function $E$: $\R_{\geq 0} \rightarrow \mathcal{M}(E_0)$ such that $E_{h}(t) \to E(t)$ in $L^1$ for a.e. $t>0$ and such that the energies converge:
\begin{align*}
\underset{h \downarrow 0}{\lim} \int_0^{\infty} P_{K,h}(E_{h}(t)) dt 
&= \int_0^{\infty} P_{\sigma_K}(E(t)) dt .
\end{align*}
\end{theorem}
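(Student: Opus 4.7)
The plan combines the compactness of the arrival times from Theorem~\ref{collect} with matching $\liminf$ and $\limsup$ bounds on the time-integrated $K_h$-energy. By Theorem~\ref{collect} I extract a (not-relabeled) subsequence $h \downarrow 0$ along which $u_h \to u$ uniformly on $\overline{E_0}$ for some Lipschitz $u \colon \overline{E_0} \to \R_{\geq 0}$. Setting $E(t) \coloneqq \{u > t\}$, the uniform convergence sandwiches the step-function superlevel sets $E_h(t) = \{u_h > t\}$ between $E(t \pm \|u_h - u\|_\infty)$, so that $E_h(t) \to E(t)$ in $L^1$ for every $t > 0$ with $|\{u = t\}| = 0$, hence for a.e.\ $t$.

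For the $\liminf$ inequality, I invoke Proposition~\ref{anis:per}, which yields the $\Gamma$-$\liminf$ of $P_{K,h}$ to $P_{\sigma_K}$ on $L^1$-converging sequences of finite-perimeter sets. Thus $\liminf_h P_{K,h}(E_h(t)) \geq P_{\sigma_K}(E(t))$ for a.e.\ $t$, and since the integrands are non-negative Fatou's lemma gives
\[
\liminf_{h \downarrow 0} \int_0^\infty P_{K,h}(E_h(t))\, dt \;\geq\; \int_0^\infty P_{\sigma_K}(E(t))\, dt.
\]

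For the matching $\limsup$, mean convexity enters through outward minimality. By Corollary~\ref{ext:omc} combined with the characterization of Theorem~\ref{contraction}, local $K_h$-outward minimality propagates through every time step, i.e.\ each $E_h(t)$ satisfies $P_{K,h}(E_h(t)) \leq P_{K,h}(E_h(t) \cup F) + \SK_{K,h}(F)$ for every $F \subseteq E_h(t)^c$. For $\tau \geq \|u_h - u\|_\infty$, the sandwich bound forces $E_h(t) \subseteq E(t-\tau)$, making $F = E(t-\tau) \setminus E_h(t)$ admissible and producing
\[
P_{K,h}(E_h(t)) \;\leq\; P_{K,h}(E(t-\tau)) + \SK_{K,h}\bigl(E(t-\tau) \setminus E_h(t)\bigr).
\]
Integrating in $t$, bounding the first term by Proposition~\ref{prop:monotonicity of energies} ($P_{K,h} \leq P_{\sigma_K}$), and then sending $\tau \downarrow 0$---using the a.e.\ continuity of $t \mapsto P_{\sigma_K}(E(t))$, which follows from the monotonicity $E(t') \subseteq E(t)$ for $t' \geq t$ and the absence of perimeter jumps under strict mean convexity---reduces the $\limsup$ estimate to controlling the integrated self-interaction.

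The main technical obstacle is therefore to show
\[
\int_0^\infty \SK_{K,h}\bigl(E(t-\tau) \setminus E_h(t)\bigr)\, dt \;\longrightarrow\; 0
\]
along an appropriately coupled limit $h, \tau \downarrow 0$. The key identity $\SK_{K,h}(F) = |F|/\sqrt{h} - P_{K,h}(F)$, which follows from $\int K_h \ast \chi_F\,dx = |F|$, reveals that the dangerous contribution is the volume part, which by Fubini integrates to a leading order of $\tau |E_0|/\sqrt{h}$. The plan is thus to choose $\tau = \tau(h)$ satisfying $\|u_h - u\|_\infty \leq \tau(h) = o(\sqrt{h})$. Establishing this quantitative rate for the arrival time convergence is where the monotone speed estimate of Theorem~\ref{mon:vel} together with the discrete dissipation $\sum_k \SK_{K,h}(E_k^h \setminus E_{k+1}^h) \leq P_{K,h}(E_0)$ from Corollary~\ref{des:ener}---both intrinsic to the scheme---become indispensable, and this balance is the hardest part of the argument.
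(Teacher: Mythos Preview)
Your $\liminf$ argument cites the wrong result: Proposition~\ref{anis:per} only gives the pointwise limit $P_{K,h}(E)\to P_{\sigma_K}(E)$ for a \emph{fixed} set $E$, not a $\Gamma$-$\liminf$ along $L^1$-converging sequences. The paper obtains the genuine lower bound from the $\Gamma$-convergence result of Alberti--Bellettini, together with an identification of their cell formula as $2\sigma_K$.

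The more serious gap is in your $\limsup$. By working with \emph{local} $K_h$-outward minimality you are forced to carry the self-interaction term $\SK_{K,h}(E(t-\tau)\setminus E_h(t))$, and your plan to kill it hinges on establishing the quantitative rate $\|u_h-u\|_\infty = o(\sqrt{h})$. Nothing in the paper yields such a rate: Theorem~\ref{collect} gives only uniform convergence with no speed, and neither Theorem~\ref{mon:vel} nor the telescoped dissipation of Corollary~\ref{des:ener} translates into a bound on $\|u_h-u\|_\infty$ of the required order. This step is not a technicality---it is essentially a new convergence theorem---and your proposal leaves it as an unproven assertion.

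The paper sidesteps this entirely by a simple observation you overlooked: since every $E_k^h$ is contained in $E_0$, one may take the container $\Omega=E_0$, in which $E_0$ is \emph{trivially} (fully, not just locally) $K_h$-outward minimizing. Corollary~\ref{ext:omc} then propagates full $K_h$-outward minimality to every $E_k^h$ in $E_0$, so that $P_{K,h}(E_h(t))\le P_{K,h}(F)$ for any $F$ with $E_h(t)\subseteq F\subseteq E_0$, with \emph{no} self-interaction penalty. Choosing $F=\{u+\eps\chi_{E_0}>t\}\supseteq E_h(t)$ for $\eps>\|u-u_h\|_\infty$ and invoking Proposition~\ref{prop:monotonicity of energies} gives the $\limsup$ directly, after which $\eps\downarrow 0$ costs only the finite term $\eps\, P_{\sigma_K}(E_0)$.
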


\begin{proof}
By Theorem~\ref{collect}, for the family of arrival time functions $\{u_h\}_{h\in(0,h_0)}$ every sequence $h \rightarrow 0$ has a subsequence converging to some function $u \in C^{0,1}(E_0)$. 
 We choose one such sequence and set $E(t) \coloneqq \{x \in E_0 | u(x) > t\}$. First we prove that for almost all $t > 0$, $E_{h_k}(t)$ converges to $E(t)$ in $L^1$. Since $u_{h_k}$ converges to $u$ uniformly, $\left( \{u > t\} \setminus \{u_{h_k} > t\} \right)$ and $\left( \{u < t\} \setminus \{u_{h_k} < t\} \right)$ both converge to the empty set in $L^1$. By a result of Alberti, Bianchini and Crippa \cite{Alberti:Bianchini:Crippa}, since $u$ is Lipschitz continuous and compactly supported, for almost all $t>0$ we have $\mathcal{H}^{d-1}(\{u = t\}) < \infty$ and thus $\mathcal{L}^d(\{u = t\}) = 0$, which implies the $L^1$ convergence.\\

We prove that we can apply \cite[Theorem 3.3]{Ishii:Pires:Souganidis} so that the limit $E(t)$, generated by our class of kernels, is the unique viscosity solution starting from $E_0$. Since this identifies the limit, everything we state in the following also holds for the whole sequence $h\downarrow0$.
The kernels are suitable in the sense of Definition~\ref{suit:K}. Since the mobility function $\mu_K$ is bounded, its inverse is positively bounded from below and by the given rate of decay we obtain
\begin{align*}
0 < \int_{\{(x \cdot n)=0\}} (1+|x|^2)K(x)dx < \infty.
\end{align*}
From \eqref{eq:mobility continuity} in the proof of Lemma~\ref{anis:z:bound} we obtain continuity of $1/\mu_K$ for the second class of kernels. Indeed the result still holds for $x_ix_jK(x)$. On a compact support the uniform continuity of the kernel is sufficient. Lastly, the convergence of quadratic surfaces is a simple consequence of \eqref{eq:convergence of quadratic}, which is shown to hold in the proof of Theorem~\ref{anis:main}.\\

%The new proof of the upper bound begins here
We first want to show the crucial upper bound
\begin{equation}\label{eq:crucial upper bound}
\underset{h \downarrow 0}{\limsup} \int_0^{\infty} P_{K,h}(E_{h}(t))dt \leq \int_0^{\infty} P_{\sigma_K}(E(t)) dt.
\end{equation}
By Theorem~\ref{collect}, $E_0$ contracts under thresholding with $K_h$ for all $h \in (0,h_0)$. Thus $E_0$ is a sufficiently large container of $E_0$ and all subsets of $E_0$ in the sense of Definition~\ref{suff}. Trivially, $E_0$ is $K_h$-outward minimizing in $E_0$. By applying Corollary~\ref{ext:omc} inductively we obtain that $E_k^h$ is outward minimizing in $E_0$ for all $k \in \mathbb{N}$ and $h \in (0,h_0)$, i.e. for all $F \subs E_0$
\begin{align*}
P_{K,h}(E_k^h) \leq P_{K,h}(E_k^h \cup F).
\end{align*}
Let $\eps > 0$ and $h$ be small enough such that $\|u - u_h\|_{\infty} < \eps$, cf. Theorem~\ref{collect}. Since $\{x \in \Rd | u(x)>0\} \subs \{x \in \Rd | u_h(x)>0\} = E_0$ we have
\begin{align*}
v \coloneqq u + \eps \chi_{E_0} \geq u_h
\end{align*}
and obtain for $t \in h[k,k+1)$ that
\begin{align*}
E_k^h = \left\{ x \in \R^d \Big| u_h(x) > t \right\} \subs \left\{ x \in \R^d \Big| v(x) > t \right\} \subs E_0.
\end{align*}
The $K_h$-outward minimality of $E_k^h$ in $E_0$ then implies
\begin{align*}
P_{K,h}(\{u_h > t\}) \leq P_{K,h}(\{ v > t \}).
\end{align*}
But the set $\{v > t\}$ does not depend on $h$ and, with $K_h$ non-negative, we can apply Proposition~\ref{prop:monotonicity of energies} to obtain
\begin{align*}
P_{K,h}(\{ v > t \}) \leq P_{\sigma_K}(\{v > t\}).
\end{align*}
With the right-hand side independent of $h$ we can now estimate the limit superior
\begin{align*}
\underset{h \downarrow 0}{\limsup} \int_0^{\infty} P_{K,h}(\{u_h >t\}) dt \leq \int_0^{\infty} P_{\sigma_K}(\{v >t\}) dt.
\end{align*}
Since $\{u > t\} \subs E_0$, by the construction of $v$ we note that for $t<\eps$ we have $\{v > t\}=E_0$ and for $t > \eps$ we have $\{v > t\}=\{u>t-\eps\}$. Thus
\begin{align*}
\int_0^{\infty} P_{\sigma_K}(\{v>t\})dt = \eps P_{\sigma_K}(E_0) + \int_0^{\infty} P_{\sigma_K}(\{u>t\})dt.
\end{align*}
Therefore, translating this again into a statement for our sets $E_h(t)$ and $E(t)$, we obtain
\begin{align*}
\underset{h \downarrow 0}{\limsup} \int_0^{\infty} P_{K,h}(E_h(t)) dt \leq \eps P_{\sigma_K}(E_0) + \int_0^{\infty} P_{\sigma_K}(E(t)) dt.
\end{align*}
Since $P_{\sigma_K}(E_0)$ is finite and $\eps$ is arbitrarily small this concludes the argument for \eqref{eq:crucial upper bound}.
\\

Next we prove the lower bound
\begin{align*}
\underset{h \downarrow 0}{\liminf} \int_0^{\infty} P_{h}(E_h(t)) dt \geq \int_0^{\infty} P_{\sigma_K}(E(t)) dt;
\end{align*}
a general result that does not depend on $E_h$ coming from thresholding. 
From a result of Alberti and Bellettini \cite{Alb:Bell:asym:beh} we obtain below that for sets $A_h$ converging to a set $A$ in $L^1$
\begin{align*}
\underset{h \downarrow 0}{\liminf} P_{K,h}(A_h) \geq P_{\sigma_K}(A).
\end{align*}
Since for almost all $t > 0$, $E_{h}(t)$ converges to $E(t)$, this implies
\begin{align*}
\underset{h \downarrow 0}{\liminf} P_{K,h}(E_{h}(t)) \geq P_{\sigma_K}(E(t))
\end{align*}
for almost all $t > 0$. Then by Fatou's Lemma
\begin{align*}
\underset{h \downarrow 0}{\liminf} \int_0^{\infty} P_{K,h}(E_{h}(t)) dt &\geq \int_0^{\infty} \underset{h \downarrow 0}{\liminf} P_{K,h}(E_{h}(t)) dt 
\\&\geq \int_0^{\infty} P_{\sigma_K}(E(t)) dt.
\end{align*}

In the rest of this proof, we make sure that the result of \cite{Alb:Bell:asym:beh} can indeed be applied in our situation and we post-process it by characterizing their cell formula for the surface tension in our case.
Theorem 1.4 (ii) of \cite{Alb:Bell:asym:beh} states that for every $\ct \in \mathrm{BV}(\R^d, \{-1,+1\})$ and every sequence $(\cth)$ such that $\cth$ converges to $\ct$ in $L^1(\Om)$, we have
\begin{align*}
\underset{h \downarrow 0}{\liminf} \, F_h(\cth) \geq F(\ct)
\end{align*}
for
\begin{align*}
F_h(\ct) \coloneqq \frac{1}{4\sqrt{h}} \int_{\Rd} \int_{\Rd} K_h(x-y)(\ct(x) - \ct(y))^2 dx dy + \frac{1}{\sqrt{h}} \int_{\Rd} W(\ct(x)) dx,
\end{align*}
where $W$ is any continuous double-well potential vanishing only at $\pm 1$ and growing at least linearly at infinity and $K$ satisfying the conditions of Proposition~\ref{anis:per} with $K \geq 0$, and
\begin{align*}
F(\ct) \coloneqq \int_{\partial^* \{\ct(x)=1\}} \tom(\nu(x)) d\mathcal{H}^{d-1}(x).
\end{align*}
Defining $\tom$ later, we can apply this to our problem by restricting the functions $\ct_h$ to take values in $\{-1,+1\}$ as well. Then, due to the double-well potential vanishing in $\pm 1$,
\begin{align*}
P_{K,h}(E) = \frac{1}{2} F_h(2\chi_E-1),
\end{align*}
and we obtain
\begin{align*}
\underset{h \downarrow 0}{\liminf}\, P_{K,h}(E_{h}(t)) \geq \frac{1}{2} \int_{\partial^* E(t)} \tom(\nu(x)) d\mathcal{H}^{d-1}(x).
\end{align*}
It remains to prove that $\tom = 2\sigma_K$. First we recall the definition of $\tom$ from \cite{Alb:Bell:asym:beh}. For $e$ a unit vector in $\R^d$ choose $(e_1, \ldots , e_{d-1})$ such that $(e_1, \ldots, e_{d-1},e)$ is an orthonormal basis. Let $M$ be the orthogonal complement of $e$. We define $\mathcal{C}_e$ as the class of all $(d-1)$-dimensional cubes on  $M$ centered on $0$. For every cube $C \in \mathcal{C}_e$, we call the strip on the cube $T_C \coloneqq \{X + te | X\in C, t \in \R\}$. We denote by $X(C)$ the class of $C$-periodic functions $\ct$: $\R^d \rightarrow [-1,1]$ that satisfy
\begin{align*}
\underset{t \rightarrow \infty}{\lim}\, \ct(X, \pm t) = \pm 1.
\end{align*}
Then, for
\begin{align*}
\mathcal{F}(\ct,A) \coloneqq \frac{1}{4} \int_A \int_{\R^d} K(y) (\ct(x+y) - \ct(x))^2 dy dx + \int_A W(\ct(x)) dx 
\end{align*}
we set
\begin{align*}
\tom(e) \coloneqq \inf\{|C|^{-1} \mathcal{F}(\ct,T_C): C \in \mathcal{C}_e, \ct \in X(C)\}.
\end{align*}
To find this infimum we collect results from \cite{Alb:Bell:opt:pro} of Alberti and Bellettini. By Theorem 3.3 of \cite{Alb:Bell:opt:pro}, for any cube $C$, $\mathcal{F}(\ct,T_C)$ is minimized by a function $\gamma^e$ that varies only in the direction of $e$. Moreover the minimum is independent of the size of the cube $C$ and 
\begin{align*}
\min \left\{ |C|^{-1}\mathcal{F}(\ct,T_C) | C \in \mathcal{C}_e , \ct \in X(C) \right\} = \min \left\{ \mathcal{F}^e(\gamma^e) | \gamma^e \in \mathrm{BV}(\R;[-1,1]) \right\}
\end{align*}
for
\begin{align*}
\mathcal{F}^e(\gamma^e) \coloneqq \frac{1}{4} \int_{\R} \int_{\R} \int_M K(x + se) (\gamma^e(s+t) - \gamma^e(t))^2 dx ds dt + \int_{\R} W(\gamma^e(t))dt.
\end{align*}
Having reduced the problem to a one-dimensional one, we can apply the results of Section 2.19 of \cite{Alb:Bell:opt:pro}. As we have reduced the problem to a function class independent of the double-well potential $W$, we can choose it freely. If we choose 
\begin{align*}
W(t) \geq c(1-t^2) \text{ for } t\in [-1,1],
\end{align*}
where
\begin{align*}
c = \int_{\{(x\cdot e) \geq 0\}} K(x) dx = \frac{1}{2},
\end{align*}
then the minimizer is the one-dimensional sign function. This allows us to explicitly express the infimum as
\begin{align*}
\tom(e) = \inf \left\{ |C|^{-1}\mathcal{F}(\ct,T_C) | C \in \mathcal{C}_e, \ct \in X(C) \right\} = \mathcal{F}^e(\sgn).
\end{align*}
We first use the symmetry of $K$ and then apply Fubini's Theorem twice to obtain
\begin{align*}
\mathcal{F}^e(\sgn) &= \frac{1}{4} \int_{\R} \int_{\R} \int_M K(x+(s-t)e) d\mathcal{H}^{d-1}(x) (\sgn(s) - \sgn(t))^2 ds\, dt\\
&= 2 \int_{\R_+} \int_{\R_+} \int_{\{(x \cdot e) = 0\}} K(x + (s+t)e) d\mathcal{H}^{d-1}(x)\, ds\, dt\\
&= 2 \int_{\R_+} \int_{\{(x \cdot e) \geq t\}} K(x) dx\, dt \\
&= 2 \int_{\{(x \cdot e) \geq 0\}} \int_{\R_+} \chi_{\{x \cdot e \geq t\}} dt\, K(x) dx.
\end{align*}
Calculating the inner integral we can again use that $K$ is an even function to conclude
\begin{align*}
\mathcal{F}^e(\sgn) &= 2 \int_{\{(x \cdot e) \geq 0\}} (x \cdot e) K(x) dx\\
&= \int_{\R^d} |x \cdot e| K(x) dx\\
&= 2 \, \sigma_K(e). \qedhere
\end{align*}
\end{proof}

\section{Proofs of statements in Section~\ref{sec:initial step}}
\label{sec:initial step proof}

In Section~\ref{sec:initial step} we established the uniform convergence of motion in the initial step. Having ommited all proofs there, they are given here in order.

\begin{proof}[Proof of Theorem~\ref{unif:con}]
We fix $x_0 \in \partial E_0$. Following an idea of Mascarenhas \cite{Mascarenhas}, as $E_0$ has $C^{2,\alpha}$-boundary, we find a neighborhood U of $x_0$, a coordinate system with $x_0=0$ and $\nu(x_0)=e_d$ and a function $g$: $\R^{d-1} \rightarrow \R$, such that $g(0) = \partial_{x_i} g(0) = 0$ and in this coordinate system
\begin{align*}
E_0 \cap U = \{(X,y) \in \R^{d-1}\times \R | y < g(X)\} \cap U.
\end{align*}
By the spectral theorem, the symmetric matrix $D^2g(0)$ is diagonalized by an orthonormal basis of eigenvectors. We fix these as $(e_i)_{i=0}^{d-1}$ in the coordinate system and define a function 
\begin{align*}
F(X,y,h) \coloneqq (G_h \ast \chi_{E_0})(X,y)
\end{align*}
and solve $F(0,z,h)=1/2$ for $z \in \R$. The heat kernel decreasing exponentially, $G(x) < C|x|^{-d-3}$ for some $C<\infty$. Hence $G_h(x) < Ch^{-d/2}|x/\sqrt{h}|^{-d-3}$,
\begin{align*}
\int_{U^c} G_h(x) dx < C h^{3/2} \int_{U^c} |x|^{-d-3} dx < C'h^{3/2}
\end{align*}
and the difference outside of $U$ is of order $h^{3/2}$. Then we express $F(0,z,h)$ in terms of the function $g$:
\begin{align*}
\frac{1}{2} = (4 \pi h)^{-d/2}\int_{\R^{d-1}}\int_{-\infty}^{g(X)}\exp\left(-\frac{|X|^2+(y+z)^2}{4h}\right)dydX + \Oc(h^{3/2}).
\end{align*}
By properties of the exponential function and substitution, we obtain that
\begin{align*}
\frac{1}{2} = (4 \pi h)^{-d/2}\int_{\R^{d-1}}\exp\left(-\frac{|X|^2}{4h}\right)\int_{-\infty}^{g(X)+z}\exp\left(-\frac{y^2}{4h}\right)dydX + \Oc(h^{3/2}).
\end{align*}
This is the sum of the integral over the half-space and the integral from $0$ to $g(X)+z$. The first one being $1/2$, we subtract it and use the (lower) error function.
\begin{align*}
0 = (4 \pi h)^{-d/2}\int_{\R^{d-1}}\exp\left(-\frac{|X|^2}{4h}\right)\sqrt{\pi h}\erf\left(\frac{g(X)+z}{\sqrt{4h}}\right)dX + \Oc(h^{3/2}).
\end{align*}
The error function is an entire function with power series representation
\begin{align*}
\erf(x) = \frac{2}{\sqrt{\pi}}\sum_{k=0}^{\infty}\frac{(-1)^k}{(2k+1)k!}x^{2k+1}.
\end{align*}
We will see that both $g(X)/\sqrt{h}$ and $z/\sqrt{h}$ vanish in the limit $h\to 0$. Since we are interested in the leading order, we first regard only the summand for $k=0$
\begin{align*}
0 = (4 \pi h)^{-d/2}\int_{\R^{d-1}}\exp\left(-\frac{|X|^2}{4h}\right)(g(X)+z)dX,
\end{align*}
and as $z$ does not depend on $X$, this simply means
\begin{align*}
-(4 \pi h)^{-1/2} z = (4 \pi h)^{-d/2}\int_{\R^{d-1}}\exp\left(-\frac{|X|^2}{4h}\right)g(X)dX.
\end{align*}
We multiply with $-\sqrt{4\pi h}$ and will remember to do the same with the rest:
\begin{equation}
\label{star}
z = -(4 \pi h)^{-(d-1)/2}\int_{\R^{d-1}}\exp\left(-\frac{|X|^2}{4h}\right)g(X)dX.
\end{equation}
Now expand the function $g$. We know that $g \in C^{2,\alpha}(\R^{d-1})$, with $g(0)=\partial_{x_i} g(0)=0$. Then, by Taylor's theorem, $g$ can be expressed as 
\begin{align*}
g(X) = \sum_{|\beta|=2} \frac{2}{\beta !} \int_0^1 (1-t) D^{\beta} g(tX) dt X^{\beta}.
\end{align*}
Using the $\alpha$-H{\"o}lder continuity of these second derivatives we show that the difference to the term for $D^{\beta}g(0)$ is of higher order. To this end we add and subtract the term for $D^{\beta}g(0)$ and obtain
\begin{align*}
g(X) =& \sum_{|\beta|=2} \frac{2}{\beta !} \int_0^1 (1-t) D^{\beta} g(0) dt X^{\beta}\\
&+ \sum_{|\beta|=2} \frac{2}{\beta !} \int_0^1 (1-t) (D^{\beta} g(tX) - D^{\beta}g(0)) dt X^{\beta}\\
=& \sum_{|\beta|=2} \frac{1}{\beta !} D^{\beta} g(0) X^{\beta} + \sum_{|\beta|=2} \frac{2}{\beta !} \int_0^1 (1-t) (D^{\beta} g(tX) - D^{\beta} g(0)) dt X^{\beta}.
\end{align*}
We regard the contribution to \eqref{star} of the summands of the first term. Due to our basis diagonalizing $D^2g(0)$, for $\beta=e_i+e_j$ with $i \neq j$, we have $D^{\beta}g(0)=0$. By a simple substitution argument, we notice that multiplying the kernel with $X_i^a$ for $a>0$ increases the order in $h$ of the integral by $a/2$. We also notice
\begin{align*}
(4 \pi h)^{-(d-1)/2} \int_{\R^{d-1}} \exp \left( -\frac{|X|^2}{4h} \right) \left( \frac{X_i}{\sqrt{h}} \right)^2 dX = 2.
\end{align*}
Indeed,
\begin{align*}
-(4 \pi h)^{-(d-1)/2}\int_{\R^{d-1}}\exp\left(-\frac{|X|^2}{4h}\right) \sum_{i=1}^{d-1} \frac{1}{2} \partial_i^2 g(0) X_i^2 dX &= -h \sum_{i=1}^{d-1} \partial_i^2 g(0)\\
&= -hH(x_0;E_0).
\end{align*}
We use H\"older continuity to show that the second term is of higher order in $h$:
\begin{align*}
&\left| \sum_{|\beta|=2} \frac{2}{\beta !} \int_0^1 (1-t) (D^{\beta} g(tX) - D^{\beta} g(0)) dt X^{\beta} \right|\\
&\leq \sum_{|\beta|=2} \frac{2}{\beta !} \int_0^1 (1-t) |D^{\beta} g(tX) - D^{\beta} g(0)| dt |X|^{\beta}\\
&\leq C \sum_{|\beta|=2} \frac{2}{\beta !} \int_0^1 (1-t)t^{\alpha} dt |X|^{\beta} |X|^{\alpha}\\
&\leq C' |X|^{2 + \alpha}.
\end{align*}
Multiplying this with the heat kernel and integrating we see that the contribution to \eqref{star} can be estimated by
\begin{align*}
\left| (4 \pi h)^{-(d-1)/2}\int_{\R^{d-1}} \exp \left( -\frac{|X|^2}{4h} \right)C'|X|^{2+\alpha} dX \right| = \mathcal{O}(h^{1+ \alpha / 2})\text{.}
\end{align*}
It remains to show that the other summands of the power series are of higher order in $h$. We remember to multiply $-\sqrt{4 \pi h}$ and take the absolute value for
\begin{align}
\notag&\left|(4 \pi h)^{-(d-1)/2}\int_{\R^{d-1}} \exp\left(-\frac{|X|^2}{4h}\right)\sqrt{4h}\sum_{k=1}^{\infty}\frac{(-1)^k}{(2k+1)k!}\left(\frac{g(X)+z}{\sqrt{4h}}\right)^{2k+1}dX \right| \\
&\quad\leq h^{-(d-1)/2} \int_{\R^{d-1}}\exp\left(-\frac{|X|^2}{4h}\right)\sum_{k=1}^{\infty}\sum_{l=0}^{2k+1} C(k,l) \frac{|g(X)|^l |z|^{2k+1-l}}{h^k}dX.\label{starstar}
\end{align}
By the Taylor expansion of $g$, we can bound $|g|^l$ by a polynomial, for which $2l$ is the lowest degree of a non-zero monomial. By monotone convergence we interchange limits for
\begin{align*}
&h^{-(d-1)/2} \sum_{k=1}^{\infty}\sum_{l=0}^{2k+1} C(k,l) \int_{\R^{d-1}}\exp\left(-\frac{|X|^2}{4h}\right) \frac{|g(X)|^l |z|^{2k+1-l}}{h^k}dX \\
\leq &\sum_{k=1}^{\infty} \sum_{l=0}^{2k+1} C'(k,l) h^{l-k} |z|^{2k+1-l}.
\end{align*}
This term is of order $h^{3/2}$ if $z = \Oc (h^{5/6})$. This will be given by the following lemma, the proof following immediatly after this proof.

\begin{lemma}
\label{z:bound}
Let $E_0$ be a bounded set with $C^2$-boundary. Then $z(x) = \Oc(h^{1-\eps})$ for all $\eps >0$ uniformly for all boundary points $x$.
\end{lemma}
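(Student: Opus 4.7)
The plan is to prove the stronger uniform bound $|z(x)| = \Oc(h)$ on $\partial E_0$, from which the lemma follows a fortiori. The strategy is to compare the Gaussian convolution $(G_h \ast \chi_{E_0})$ along the normal ray to a pure half-space convolution and solve the resulting implicit equation.

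First, I would localize: by compactness and $C^2$-regularity, $\partial E_0$ admits a uniform tubular neighborhood of width $\delta_0 > 0$. For any $x$ with $\dist(x, \partial E_0) \geq \delta_0$, a standard Gaussian tail estimate gives
\begin{align*}
|(G_h \ast \chi_{E_0})(x) - \chi_{E_0}(x)| \leq e^{-c/h},
\end{align*}
so for $h$ small enough (uniformly in $x$), the sets $E_1^h$ and $E_0$ coincide outside the $\delta_0$-tube. This enforces $|z(x)| \leq \delta_0$ a priori, reducing matters to the regime where the local graph representation of the proof of Theorem~\ref{unif:con} applies with a uniform Taylor bound $|g(X)| \leq C |X|^2$.

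For $|l| \leq \delta_0$, integrating out the last coordinate exactly as in that proof yields
\begin{align*}
F(l) &\coloneqq (G_h \ast \chi_{E_0})(0, l) \\
&= (4\pi h)^{-(d-1)/2} \int_{\RS} e^{-|X|^2/(4h)} \Phi\!\left(\frac{g(X) - l}{\sqrt{2h}}\right) dX + \Oc(e^{-c/h}),
\end{align*}
with $\Phi$ the standard Gaussian CDF. The heart of the argument is a uniform comparison with a half-space: substituting $X = \sqrt{h}\,Y$, using the Lipschitz continuity of $\Phi$ together with $|g(\sqrt{h}\,Y)| \leq C h |Y|^2$, and treating the tail $|Y| > r_0/\sqrt{h}$ by Gaussian decay, one obtains
\begin{align*}
\left| F(l) - \Phi\!\left(-\frac{l}{\sqrt{2h}}\right) \right| = \Oc(\sqrt{h})
\end{align*}
uniformly in $|l| \leq \delta_0$ and in $x_0 \in \partial E_0$. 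Imposing $F(z) = 1/2$ then forces $\Phi(-z/\sqrt{2h}) = 1/2 + \Oc(\sqrt{h})$, and smoothness of $\Phi^{-1}$ at $1/2$ with $\Phi^{-1}(1/2) = 0$ yields $|z|/\sqrt{2h} = \Oc(\sqrt{h})$, i.e.\ $|z| = \Oc(h)$.

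The main technical obstacle is maintaining uniformity across boundary points of $E_0$: the tube width $\delta_0$, the constant $C$ in $|g(X)| \leq C|X|^2$, and the exponent $c$ in the tail bounds must all be chosen uniformly in $x_0$. This follows from compactness of $\partial E_0$ combined with $C^2$-regularity alone; in particular, the stronger H\"older control exploited for the leading-order identification in Theorem~\ref{unif:con} is not needed for this coarser bound.
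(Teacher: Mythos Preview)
Your proposal is correct and takes a genuinely different route from the paper. The paper's proof is geometric: it inscribes an orthotope $A_h$ of tangential side length $\sim h^{(1-\eps)/2}$ inside a tangent ball $B_R(p)\subset E_0$, evaluates $(G_h\ast\chi_{A_h})(0)$ explicitly as a product of error functions, and then applies l'H\^opital's rule to show this product exceeds $1/2$ for small $h$. This yields only $z=\Oc(h^{1-\eps})$. Your argument is analytic: you compare $F(l)$ directly with the half-space value $\Phi(-l/\sqrt{2h})$ using the global Lipschitz bound on $\Phi$ and the quadratic bound $|g(\sqrt{h}Y)|\le Ch|Y|^2$, obtaining $|F(l)-\Phi(-l/\sqrt{2h})|=\Oc(\sqrt{h})$ uniformly, and then invert $\Phi$ at $1/2$. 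This is cleaner, avoids the orthotope construction and l'H\^opital, and gives the sharp $z=\Oc(h)$ directly. The paper's approach, on the other hand, is more readily transplanted to non-Gaussian kernels lacking a product structure (compare Lemma~\ref{anis:z:bound}), whereas your argument leans on the one-dimensional CDF factoring out cleanly.

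One small point worth making explicit: you write ``Imposing $F(z)=1/2$'', but $z$ is defined as a $\sup$/$\inf$, not as a root. With your a priori localization $|z|\le\delta_0$ and the continuity of $F$ (which follows since $G_h$ is smooth), the intermediate value theorem together with the boundary values $F(-\delta_0)>1/2>F(\delta_0)$ does force $F(z)=1/2$; it would strengthen the write-up to say this in one line.
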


Thus the first part of the proof is finished and
\begin{align*}
z(x) = -hH(x;E_0) + \Oc(h^{1 + \alpha/2}).
\end{align*}

Let us now assume that $\partial E_0$ is of class $C^4$. Then in particular, $z(x) = \Oc(h)$. Thus all summands in \eqref{starstar} of the power series are at least of order $h^2$. In addition, the Taylor series of $g$ is of the form
\begin{align*}
g(X) = \frac{1}{2}\sum_{i,j=1}^{d-1}D_{ij}g(0) X_i X_j + \frac{1}{6}\sum_{i,j,k=1}^{d-1}D_{ijk}g(0) X_i X_j X_k + \sum_{|\beta|=4}R_{\beta}(X)X^{\beta}
\end{align*}
for some functions $R_{\beta}$, bounded in $\|g\|_{C^4}$. The first term is exactly as in the last proof. The third term has as additional factor $X^{\beta}$, with $|\beta|=4$ thus is of order $h^2$. The second term is an odd function, thus for all $i,j,k$
\begin{align*}
\int_{\R^{d-1}} \exp\left(-\frac{|X|^2}{4h}\right)D_{ijk}g(0) X_i X_j X_k dX = 0.
\end{align*}
Thus, all other terms are at least of order $h^2$ and
\begin{align*}
z(x) = -hH(x;E_0) + \Oc(h^{2}). & \qedhere
\end{align*}
\end{proof}

\begin{proof}[Proof of Lemma~\ref{z:bound}]
Let $E_0$ be a bounded set in $\R^{d}$ with $C^2$-boundary. Then there is a $R>0$ such that for every $x \in \partial E_0$ and
\begin{align*}
\{x\} = \overline{B_R(p)} \cap \partial E_0 \text{ for } p = x \pm R\nu(x).
\end{align*}
Let w.l.o.g.\ $B_R(y) \subs E_0$. We fix the coordinate system such that $\nu(x) = e_d$ and $x-2c_1h^{1-\eps}e_d = 0$ for some $c_1 >0$. We then find an orthotope $A_h \subs B_R(y)$, which will allow us to separate dimensions in the convolution integral,
\begin{align*}
A_h \coloneqq  (-c_2h^{(1-\eps)/2}, c_2h^{(1-\eps)/2})^{d-1} \times (-R,c_1h^{1-\eps}).
\end{align*}
The side length in "horizontal" directions comes from Pythagoras' Theorem and the constant ratio between diameter and side length of a $(d-1)$-dimensional hyper cube. We thus find a constant $c_2>0$ with
\begin{align*}
c_2 h^{(1-\eps)/2} \leq  \frac{\sqrt{R^2 - (R - c_1h^{1-\eps})^2}}{\sqrt{d-1}} = \sqrt{\frac{2 c_1 R}{d-1}} \, h^{(1-\eps)/2} - \Oc(h^{1-\eps})
\end{align*}
for $h$ small enough. This choice of $c_2$ implies that $A_h \subs B_R(y)$. See Figure~\ref{fig:orthotope} for a graphical representation.\\

%z-bound for Gaussian
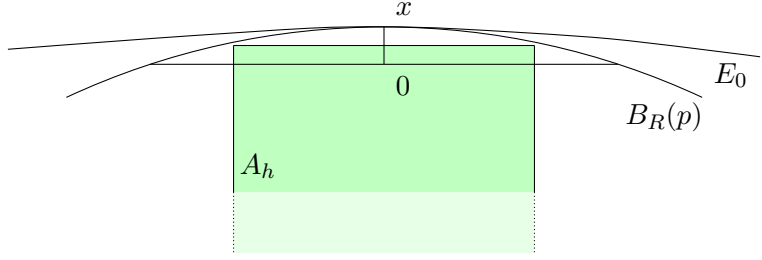
\begin{figure}
\centering
\begin{tikzpicture}

\draw plot [smooth] coordinates {(-5,-0.3) (-3,-.15) (-1,-0.02) (0,0) (1,-0.03) (3,-0.16) (5,-0.4)};

\draw (4.23, -0.94) arc (65:115:10);

\fill [green!25] (-2,-0.25) rectangle (2,-2.2);

\fill [green!10] (-2,-2.2) rectangle (2,-3);

\draw (0,0) -- (0,-.5);

\draw (-3.11,-.5) -- (3.11,-.5);

\draw (-2,-0.25) -- (2,-0.25);

\draw (-2,-0.25) -- (-2,-2.2);

\draw (2,-0.25) -- (2,-2.2);

\draw [densely dotted] (-2,-2.2) -- (-2,-3);

\draw [densely dotted] (2,-2.2) -- (2,-3);

\node [above right=1pt of {(0,0)}] {$x$};

\node [below right=1pt of {(0,-.5)}] {$0$};

\node [below right=2pt of {(4.2,-.3)}] {$E_0$};

\node [below right=3pt of {(3,-.8)}] {$B_R(p)$};

\node [above right=2pt of {(-2.1,-2.2)}] {$A_h$};
\end{tikzpicture}
\caption{Construction of the orthotope $A_h$}
\label{fig:orthotope}
\end{figure}

We show that for $h$ small enough $0$ is in $E_1^h$. That is, the boundary doesn't move this far. Since $A_h \subs B_R(p) \subs E_0$, 
\begin{align*}
(G_h \ast \chi_{E_0})(0) \geq& \; (G_h \ast \chi_{A_h})(0)\\
=& \; \int_{-R}^{c_1 h^{1-\eps}} (4 \pi h)^{-1/2} \exp\left(-\frac{s^2}{4h}\right) ds\\
&\times \left[ \int_{-c_2 h^{(1-\eps)/2}}^{c_2 h^{(1-\eps)/2}} (4 \pi h)^{-1/2} \exp \left(-\frac{s^2}{4h}\right) ds \right]^{d-1}.
\end{align*}
This can be expressed in error functions. We calculate
\begin{align*}
\int_0^r\exp(-cs^2) ds = \frac{\sqrt{\pi}}{2 \sqrt{c}} \erf(\sqrt{c}r)
\end{align*}
and set $c = (4h)^{-1/2}$ to obtain
\begin{align*}
(G_h \ast \chi_{E_0})(0) \geq \frac{1}{2} \left[ \erf \left( \frac{c_1}{2}h^{1/2-\eps}\right) + \erf \left( \frac{R}{2} h^{-1/2} \right) \right] \left[ \erf \left( \frac{c_2}{2} h^{-\eps/2} \right) \right]^{d-1}.
\end{align*}
If the right-hand side is larger than $1/2$, then $0$ is in $E_1^h$. That is equivalent to 
\begin{equation}
\label{conv:erf}
1 > \frac{\left[ \erf \left( \frac{c_2}{2}h^{-\eps/2} \right) \right]^{1-d} - \erf \left( \frac{R}{2} h^{-1/2} \right)}{\erf \left( \frac{c_1}{2}h^{1/2-\eps}\right)}.
\end{equation}
As we are interested in the limit $h \downarrow 0$, we apply l'H\^{o}pital's rule. Numerator and denominator are differentiable and converge to zero when $h$ converges to zero. For $h$ positive, the denominator is positive. We differentiate both terms with respect to $h$:
\begin{align*}
\left( \left[ \erf \left( \frac{c_2}{2} h^{-\eps/2} \right) \right]^{1-d} \right)' =& \; \frac{(d-1) c_2 \eps}{2 \sqrt{\pi}} \left[ \erf \left( \frac{c_2}{2} h^{-\eps/2} \right) \right]^{-d} 
\\&\times h^{-1-\eps/2} \exp \left( -\frac{c_2^2}{4} h^{-\eps} \right), \\
\left( - \erf \left( \frac{R}{2} h^{-1/2} \right) \right)' =& \; \frac{R}{2 \sqrt{\pi}} h^{-3/2} \exp \left( -\frac{R^2}{4} h^{-1} \right), \\
\left( \erf \left( \frac{c_1}{2} h^{1/2-\eps} \right) \right)' =& \; \frac{c_1}{\sqrt{\pi}} \left( \frac{1}{2}-\eps \right) h^{-1/2-\eps} \exp \left( -\frac{c_1^2}{4} h^{1-2\eps} \right).
\end{align*}
The last term being positive, we regard the fraction of derivatives
\begin{align*}
&\frac{ \frac{(d-1) c_2 \eps}{2 \sqrt{\pi}} \left[ \erf \left( \frac{c_2}{2} h^{-\eps/2} \right) \right]^{-d} h^{-1-\eps/2} \exp \left( -\frac{c_2^2}{4} h^{-\eps} \right) + \frac{R}{2 \sqrt{\pi}} h^{-3/2} \exp \left( -\frac{R^2}{4} h^{-1} \right) }{ \frac{c_1}{\sqrt{\pi}} \left(\frac{1}{2}-\eps \right) h^{-1/2-\eps} \exp \left( -\frac{c_1^2}{4} h^{1-2\eps} \right) } \\
&= \frac{(d-1) c_2 \eps}{2 c_1 (1/2 - \eps)} \left[ \erf \left( \frac{c_2}{2} h^{-\eps/2} \right) \right]^{-d} h^{(\eps - 1)/2} \exp \left( \frac{c_1^2}{4} h^{1/2 - \eps} - \frac{c_2^2}{4} h^{-\eps} \right) \\
&\quad + \frac{R}{2c_1 (1/2 -\eps)} h^{\eps-1} \exp \left( \frac{c_1^2}{4} h^{1/2 - \eps} - \frac{R^2}{4} h^{-1} \right),
\end{align*}
which converges to zero as $h$ converges to zero. Then by l'H\^{o}pital's rule the right-hand side of \eqref{conv:erf} also converges to zero and the inequality is true for $h$ small enough.
\end{proof}

\begin{proof}[Proof of Proposition~\ref{anis:per}]
Since $E$ is a set of finite perimeter, $\chi_E$ and $\chi_{E^c}$ are functions of bounded variation on $\{0, 1\}$. Then from \cite{Esedoglu:Otto} Lemma A.3 we recall
\begin{align*}
\underset{h \downarrow 0}{\lim} \frac{1}{\sqrt{h}} \int_{\Rd} \chi_E(x-\sqrt{h}y) \chi_{E^c}(x)dx = \int_{\partial^* E} (\nu(x) \cdot y)_+ d\mathcal{H}^{d-1}(x).
\end{align*}
as well as
\begin{align*}
P_{K,h}(E) &= \frac{1}{\sqrt{h}} \int_{\Rd} \int_{\Rd} K_h(x-y) \chi_{E}(y) \chi_{E^c}(x) dx dy\\
&= \int_{\Rd} K_h(y) \frac{1}{\sqrt{h}} \int_{\Rd} \chi_E(x-y) \chi_{E^c}(x) dx dy\\
&= \int_{\Rd} K(y) \frac{1}{\sqrt{h}} \int_{\Rd} \chi_E(x - \sqrt{h}y) \chi_{E^c}(x) dx dy.
\end{align*}
From (A.17) in \cite{Esedoglu:Otto} we obtain
\begin{align}
\frac{1}{\sqrt{h}} \int_{\Rd} \chi_E(x - \sqrt{h}y) \chi_{E^c}(x) dx &\leq \frac{1}{\sqrt{h}} \int_{\Rd} |\chi_E(x - \sqrt{h}y) - \chi_E(x)| dx \notag\\
\label{eq:weak perimeter bound}&\leq |y|P(E).
\end{align}
By the bound \eqref{K:bound}, $|y|K(y) \in L^1$, and by assumption, $E$ is a set of finite perimeter. Thus we can apply dominated convergence and due to the symmetry of $K$ we obtain
\begin{align*}
\underset{h \downarrow 0}{\lim} P_{K,h}(E) = \frac{1}{2} \int_{\R^d} K(y) \int_{\partial^* E} |\nu(x) \cdot y| d\mathcal{H}^{d-1}(x) dy.
\end{align*}
From Fubini's Theorem we obtain
\begin{align*}
\frac{1}{2} \int_{\partial^* E} \int_{\R^d} |\nu(x) \cdot y| K(y) dy\, d\mathcal{H}^{d-1}(x) = P_{\sigma_K}(E)
\end{align*}
which concludes the proof.
\end{proof}

In the following proof we will use the non-negativity of $K$ only in the discussion of a priori bounds of z. This allows us to quote major parts of the proof in the proof of Theorem~\ref{negative:Ker}.

\begin{proof}[Proof of Theorem~\ref{anis:main} in case of (I)]
We assume $K \in C_c^{0,\alpha}(\R^d)$ and again fix $x_0 \in \partial E_0$. As the boundary of $E_0$ is of the class $C^{2,\alpha}$ we find a neighborhood U of $x_0$, a coordinate system with $x_0=0$ and $e_d =\nu(x_0)$ and a function $g \in C_c^{2,\alpha}(\R^{d-1})$, such that $g(0) = \partial_{x_i} g(0) = 0$ and in this coordinate system
\begin{align*}
E_0 \cap U = \{(X,y) \in \R^{d-1}\times \R | y < g(X)\} \cap U.
\end{align*}
We further choose $g$ to have compact support in some large ball. By the spectral theorem, the symmetric matrix $D^2g(0)$ is diagonalized by an orthonormal basis of eigenvectors. We fix these as $(e_i)_{i=0}^{d-1}$ in the coordinate system and define the function 
\begin{align*}
F(X,y,h) \coloneqq (K_h \ast \chi_{E_0})(X,y).
\end{align*}
We then solve $F(0,z,h)=1/2$ for $z \in \R$. Note that the signed distance $z(x_0)$ is not necessarily the only solution, since the $(1/2)$-level set may have positive measure. We show the claim to hold for all solutions.\\

Since $K$ is compactly supported, there is a positive $h_0$ such that $\supp(K_h) \subs U$  for all $h \in (0,h_0)$. We choose such an $h$. Then
\begin{align*}
F(0,z,h) &= (K_h \ast \chi_{E_0 \cap U})(0,z)\\
&= h^{-d/2} \int_{\R^{d-1}} \int_{-\infty}^{g(X)} K(X/\sqrt{h}, (y-z)/\sqrt{h}) dy dX
\end{align*}
and, by substitution of $y$ with $y+z$,
\begin{align*}
= h^{-d/2} \int_{\R^{d-1}} \int_{-\infty}^{g(X) + z} K(X/\sqrt{h}, y/\sqrt{h}) dy dX.
\end{align*}
As $K$ is even and $\int_{\Rd}K=1$, integral of $K$ over any half space is $1/2$. Remembering that $g(X)+z$ may be negative we write
\begin{align*}
F(0,z,h) = \frac{1}{2} + h^{-d/2} \int_{\R^{d-1}} \int_0^{g(X)+z} K(X/\sqrt{h}, y/\sqrt{h}) dy dX.
\end{align*}
Thus $z$ solves $F(0,z,h)=1/2$ if and only if
\begin{align*}
0 = h^{-d/2} \int_{\R^{d-1}} \int_0^{g(X)+z} K(X/\sqrt{h}, y/\sqrt{h}) dy dX.
\end{align*}

In the rest of this proof, we will show that the solution $z$ satisfies \eqref{eq:initial movement PDE}. 
We begin by giving some motivation. Using that the domain of integration lies close to the hyperplane $x_d=0$ on the support of $K_h$, we approximate
\begin{align*}
 0 &= h^{-d/2} \int_{\R^{d-1}} \int_0^{g(X)+z} K(X/\sqrt{h}, y/\sqrt{h}) dy dX\\
&\approx h^{-d/2} \int_{\R^{d-1}} \int_0^{\frac{1}{2} \sum \partial_i^2g(0)X_i^2 + z} K(X/\sqrt{h},0) dy dX\\
&= h^{-d/2} \int_{\R^{d-1}} K(X/\sqrt{h},0) \left(\frac{1}{2} \sum \partial_i^2g(0)X_i^2 + z\right) dX\\
&= \frac{1}{2} h^{1/2} \sum \partial_i^2g(0) \int_{\R^{d-1}} K(X,0) X_i^2 dX + h^{-1/2} \int_{\R^{d-1}} K(X,0) dX z\\
&= \frac{1}{2} h^{1/2} H_{\sigma}(x_0,E_0) + h^{-1/2} \frac{1}{2\mu_K(e_d)} z(x_0).
\end{align*}

Now we make this computation (meaning the step ``$\approx$'') rigorous. We start by writing
\begin{align*}
0 =& \; h^{-d/2} \int_{\R^{d-1}} \int_0^{g(X)+z} K(X/\sqrt{h}, y/\sqrt{h}) dy dX\\
=& \; h^{-d/2} \int_{\R^{d-1}} \int_0^{g(X)+z} K(X/\sqrt{h}, 0) dy dX \\
&+ h^{-d/2} \int_{\R^{d-1}} \int_0^{g(X)+z} ( K(X/\sqrt{h}, y/\sqrt{h}) - K(X/\sqrt{h}, 0) ) dy dX.
\end{align*}
We denote by $\Err_h(z)$ the second term on the right-hand side. Commuting the trivial $y$-integral in the first term, we have 
\begin{align*}
0 = h^{-d/2} \int_{\R^{d-1}} K(X/\sqrt{h}, 0) (g(X) + z) dX + \Err_h(z).
\end{align*}
Shifting the $z$-term to the left-hand side, we substitute
\begin{align*}
h^{-1/2} \int_{\R^{d-1}} K(X,0) dX z = -h^{-d/2} \int_{\R^{d-1}} K(X/\sqrt{h}, 0) g(X) dX - \Err_h(z).
\end{align*}
But now the integral on the left-hand side is exactly the mobility term $1/(2\mu_K(e_d))$, and
\begin{equation}
\label{plus}
\frac{1}{\mu_K(e_d)} z = -h^{-(d-1)/2} 2\int_{\R^{d-1}} K(X/\sqrt{h}, 0) g(X) dX - 2\sqrt{h} \Err_h(z).
\end{equation}

Next, we observe that $g$ is in $C^{2,\alpha}$ and $g(0)=\partial_ig(0)=0$. Thus, by Taylor's theorem
\begin{align*}
g(X) = \sum_{|\beta|=2} \frac{2}{\beta!} \int_0^1 (1-t) D^{\beta} g(tX) dt \, X^{\beta}
\end{align*}
and $D^{\beta}g$ is $\alpha$-H\"older continuous. Again we split this term, showing the difference term to be of higher order:
\begin{align*}
g(X) = &\sum_{|\beta|=2} \frac{2}{\beta!} \int_0^1 (1-t) D^{\beta} g(0) dt \, X^{\beta}\\ 
&+ \sum_{|\beta|=2} \frac{2}{\beta!} \int_0^1 (1-t) ( D^{\beta} g(tX) - D^{\beta} g(0) ) dt \, X^{\beta}.
\end{align*}
The first term generates the anisotropic curvature of $\partial E_0$ in $x_0$. Indeed, due to our choice of coordinate system, $\partial_i\partial_jg(0)=0$ for $i \neq j$, and
\begin{align*}
-h^{-(d-1)/2} 2\int_{\R^{d-1}} &K(X/\sqrt{h}, 0) \sum_{|\beta|=2} \frac{2}{\beta!} \int_0^1 (1-t) D^{\beta} g(0) dt \, X^{\beta} dX\\
= &-h^{-(d-1)/2} \sum_{i=1}^{d-1}\partial_i^2g(0) \int_{\R^{d-1}} K(X/\sqrt{h},0) X_i^2 dX
\end{align*}
Substituting $X$ for $X/\sqrt{h}$, due to the factor $X_i^2$, we get
\begin{align*}
= -h \sum_{i=1}^{d-1} \partial_i^2g(0) \int_{\R^{d-1}} K(X,0) X_i^2 dX.
\end{align*}
But the second partial derivatives of $g$ in $0$ are exactly the respective (isotropic) principal curvatures $\kappa_i$. If $K$ satisfies the conditions of Proposition~\ref{div:sigma}, by \eqref{anisotropic:curvature:expression} this is 
\begin{align*}
-h H_{\sigma}(x_0;E_0).
\end{align*}
We need to show, that with $f(X) = (X \cdot MX) + a$ for any symmetric $(d-1)$-matrix $M$ and number $a$
\begin{align*}
\underset{\eps \rightarrow 0}{\lim} \underset{O \in \mathrm{O}(d)}{\sup} \int_{\R^{d-1}} |K \circ O(X,\eps f(X)) - K \circ O(X,0)| (1 + |X|^2) dX = 0.
\end{align*}
Since $K$ is compactly supported and H\"older continuous, we can estimate the left-hand side by
\begin{align*}
\underset{\eps \rightarrow 0}{\lim} \eps^{\alpha} C \int_{B_R^{d-1}(0)} |f(X)|^{\alpha} (1 + |X|^2) dX = 0,  
\end{align*}
and the condition is satisfied.

It remains to show that the contribution of the two error terms is of higher order. We begin with the term concerning $g$. Since $D^{\beta}g$ is $\alpha$-H\"older continuous, 
\begin{align*}
&\left| \sum_{|\beta|=2} \frac{2}{\beta!} \int_0^1 (1-t) (D^{\beta} g(tX) - D^{\beta}g(0)) dt \, X^{\beta} \right| \\
&\leq \sum_{|\beta|=2} \frac{2}{\beta!} \int_0^1 (1-t) |D^{\beta} g(tX) - D^{\beta} g(0)| dt \, |X^{\beta}|\\
&\leq \sum_{|\beta|=2} \frac{2}{\beta!} \int_0^1 (1-t) t^{\alpha} dt \, |X^{\beta}||X|^{\alpha}\\
&\leq C |X|^{2+\alpha}
\end{align*}
for some $C>0$. We now find a bound for the error, resulting from replacing $g(X)$ in \eqref{plus} by its second order Taylor polynomial:
\begin{align*}
\left| 2 h^{-(d-1)/2} \int_{\R^{d-1}} \right. &K(X/\sqrt{h},0) \\
&\left. \times \sum_{|\beta|=2} \frac{2}{\beta!} \int_0^1 (1-t) (D^{\beta} g(tX) - D^{\beta}g(0)) dt \, X^{\beta} dX \right|\\
\leq \, C h^{-(d-1)/2} \int_{\R^{d-1}} &|K(X/\sqrt{h},0)| |X|^{2+\alpha} dX.
\end{align*}
We substitute $X$ for $X/\sqrt{h}$ in order to rewrite the right-hand side as
\begin{align*}
C h^{(1+\alpha)/2}  \int_{\R^{d-1}} |K(X,0)| |X|^{2+\alpha} dX.
\end{align*}
The integral is finite because $K$ has compact support. Hence, this term is of order $h^{(1+\alpha)/2}$.\\

We now show the smallness of the other error term, namely, $\sqrt{h}\Err_h(z) = \Oc(h^{1+\alpha/2})$. Recall that $K$ is $\alpha$-H\"older continuous, i.e. 
\begin{align*}
|K(x)-K(x')| \leq C|x-x'|^{\alpha}
\end{align*}
and, since K has compact support, there is a number $R>0$ such that 
\begin{align*}
|K(x)-K(x')| \leq C|x-x'|^{\alpha}\chi_{\{\min(|x|,|x'|)<R\}}(x,x').
\end{align*}
We apply this to $\sqrt{h}\Err_h(z)$ and get
\begin{align*}
\Big| &\sqrt{h} \Err_h(z) \Big| 
\\ &\leq h^{-(d-1)/2} \int_{\R^{d-1}} \int_{-|g(X)|-|z|}^{|g(X)|+|z|} | K(X/\sqrt{h}, y/\sqrt{h}) - K(X/\sqrt{h}, 0)| dy\, dX \\
&\leq h^{-(d-1)/2} \int_{B_{R\sqrt{h}}^{d-1}(0)} \int_{-|g(X)|-|z|}^{|g(X)|+|z|} C |y/\sqrt{h}|^{\alpha} dy\, dX \\
&\leq h^{-(d-1)/2} \int_{B_{R\sqrt{h}}^{d-1}(0)} \frac{2C}{1+\alpha} h^{-\alpha/2} (|g(X)|+|z|)^{1+\alpha}dX.
\end{align*}
As $(\cdot)^{1+\alpha}$ is convex, we apply Jensen's inequality:
\begin{align*}
\left| \sqrt{h} \Err_h(z) \right| \leq h^{-(d-1+\alpha)/2} \int_{B_{R\sqrt{h}}^{d-1}(0)} C'(|g(X)|^{1+\alpha}+|z|^{1+\alpha}) dX.
\end{align*}
Using the Taylor rest representation of $g$ on a closed ball and substitution we bound the term for the first summand
\begin{align*}
&\leq h^{-(d-1+\alpha)/2} \int_{B_{R\sqrt{h}}^{d-1}(0)} C' |g(X)|^{1+\alpha} dX\\
&\leq h^{-(d-1+\alpha)/2} \int_{B_{R\sqrt{h}}^{d-1}(0)} C''|X|^{2(1+\alpha)} dX\\
&\leq C''' h^{1+\alpha/2}.
\end{align*}

It remains to show that $z$ is of order $h$ or the mobility term $1/\mu_K(e_d) = 0$. We remember that
\begin{align*}
\frac{1}{\mu_K(e_d)} = 2\int_{\R^{d-1}} K(X,0) dX.
\end{align*}
Thus if $1/\mu_K(e_d) > 0$, there is a domain in $\supp(K) \cap \{x \, | \, x_d = 0\}$, where $K$ is positive. Then, as $K$ is continuous, for all $\eps>0$
\begin{align*}
\int_{\R^{d-1}} \int_0^{\eps} K(X,y) dy dX > 0.
\end{align*}
Since the boundary of $E_0$ is compact and of class $C^{2}$, there is a number $R>0$, such that for every $x \in \partial E_0$ we have
\begin{align*}
\{x\} = \overline{B_R(y)} \cap \partial E_0 \text{ for }p = x \pm R\nu(x).
\end{align*}
Without loss of generality, we fix some pair $(x,p)$, with $p \in E_0$, and a coordinate system, such that $\nu(x)=e_d$ and $x-2che_d=0$ for some $c$ with
\begin{align*}
c > \frac{L^2}{2R},
\end{align*}
where $L$ is such that $B_L(0)\sups \supp(K)$. Then, for $h$ small enough, 
\begin{align*}
B_{L\sqrt{h}}(0)\cap \{y < ch\} \subs B_R(y) \subs E_0.
\end{align*}
For a visualization see Figure~\ref{fig:dome}. $K$ being an even function with support in $B_L(0)$, and positive integral on $B_L(0)\cap \{0 < y < ch\}$, we have
\begin{align*}
(K_h \ast \chi_{E_0})(0) \geq \int_{B_{L\sqrt{h}}(0) \cap \{y < ch\}} K_h(x) dx = \int_{B_L(0) \cap \{y < c\sqrt{h}\}} K(x) dx > 1/2
\end{align*}
and $0$ is not in $E_1$ for any $h$ small enough. Thus $z \in \mathcal{O}(h)$ and
\begin{align*}
&h^{-(d+\alpha)/2} \int_{B_{R\sqrt{h}}^{d-1}(0)} C'|z|^{1+\alpha} dX\\
&\leq C'' h^{1+\alpha/2},
\end{align*}
which concludes $\sqrt{h}\Err_h(z) = \Oc(h^{1+\alpha/2})$. If $1/\mu_K(e_d)=0$, the left-hand side of our equation is zero. Furthermore, 
\begin{align*}
H_{\sigma_{K}}(x;E_0) = \sum_{i=1}^{d-1} \kappa_i \int_{\R^{d-1}} X_i^2 K(X,0)dX = 0
\end{align*}
and the equation we want to prove reads $0 = \mathcal{O}(h^{1+\alpha/2})$, which is true. Collecting all terms, we find
\begin{align*}
\frac{1}{\mu_K(e_d)} z(x) = - h H_{\sigma_{K}}(x;E_0) + \mathcal{O}(h^{1+\alpha/2}),
\end{align*}
which concludes the proof in the first case.
\end{proof}

%z-bound for compact kernel
\begin{figure}
\centering
\begin{tikzpicture}

\fill [green!25] (0,-.5) circle (2.2);

\fill [white] (-5,1.7) rectangle (5,-.25);

\draw plot [smooth] coordinates {(-5,-0.3) (-3,-.15) (-1,-0.02) (0,0) (1,-0.03) (3,-0.16) (5,-0.4)};

\draw (4.23, -0.94) arc (65:115:10);

\draw (0,0) -- (0,-.5);

\draw (-3.11,-.5) -- (3.11,-.5);

\draw (-2.2,-0.25) -- (2.2,-0.25);

\draw (-2.18,-0.25) arc (173:367:2.2);

\node [above right=1pt of {(0,0)}] {$x$};

\node [below right=1pt of {(0,-.5)}] {$0$};

\node [below right =2pt of {(4.2,-.3)}] {$E_0$};

\node [below right=3pt of {(3,-.8)}] {$B_R(p)$};

\node [below right=1pt of {(-1.9,-1.1)}] {$B_{\sqrt{h}L}(0) \cap \{y<ch\}$};
\end{tikzpicture}
\caption{Dome covering more than half of the support of $K_h$}
\label{fig:dome}
\end{figure}
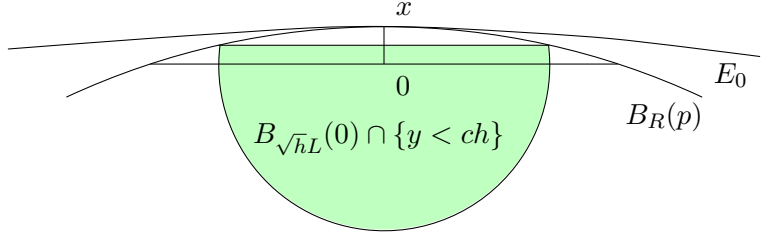

% Second Part of Long Proof 
We turn to the second statement and functions with non-compact support, where we have to additionally study decay rates at infinity and integrability.
\begin{proof}[Proof of Theorem~\ref{anis:main} in case of (II)]
Let $K$ satisfy the conditions in \textit{(II)}, then in particular for some $\eps > 0$
\begin{align*}
|K(x)| \leq C|x|^{-d-2-\eps} \quad \text{for all } x\in \R^d.
\end{align*}
For some point $x_{0} \in \partial E_0$ with a neighborhood $U$ we set our coordinate system and the boundary function $g$ as in the proof in the case of \textit{(I)} above. In particular $x_0=0$. Let $R$ be a positive number with $B_R(0) \subs U$. Then we find a higher order bound for the difference between the global and the localized problem
\begin{align*}
\left| (K_h \ast \chi_{E_0})(0,z) - \int_{\R^{d-1}} \int_{-\infty}^{g(X)} K_h(X,y-z) dy \, dX \right| \leq \int_{B_R^c(0)} |K_h(x)| dx.
\end{align*}
Using the asymptotic bound of $K$ we obtain
\begin{align*}
\int_{B_R^c(0)} |K_h(x)| dx &\leq h^{-d/2} \int_{B_R^c(0)} C|x/\sqrt{h}|^{-d-2-\eps} dx\\
&= h^{1+\eps/2} C\int_{B_R^c(0)} |x|^{-d-2-\eps} dx
\end{align*}
and, since the integral is finite for any $h$,
\begin{align*}
\int_{B_R^c(0)} |K_h(x)| dx \leq C' h^{1+\eps/2}.
\end{align*}
Fully analogously to \textit{(I)}, we obtain that our equation $F(0,z,h)=1/2$ is equivalent to
\begin{align*}
0 = h^{-d/2} \int_{\R^{d-1}} \int_0^{g(X) + z} K(X/\sqrt{h},y/\sqrt{h}) dy \, dX + \Oc(h^{1+\eps/2}).
\end{align*}
Further fully analogously, this is equivalent to
\begin{align*}
\frac{1}{\mu_K(e_d)} z = &- h^{-(d-1)/2} 2\int_{\R^{d-1}} K(X/\sqrt{h},0)g(X) dX\\ 
&- h^{-(d-1)/2} \int_{\R^{d-1}} 2\int_0^{g(X)+z} (K(X/\sqrt{h},y/\sqrt{h}) - K(X/\sqrt{h},0)) dy dX\\
&+ \Oc(h^{3/2+\eps/2}).
\end{align*}
We again want to show the second summand on the right-hand side to be of higher order in $h$. Due to the weaker condition on the kernel $K$, we will get less a-priori information on the order of $z$. Instead, we have a decaying Lipschitz bound on $K$:
\begin{align*}
|K(X/\sqrt{h},y/\sqrt{h}) - K(X/\sqrt{h},0)| \leq C \left| \frac{X}{\sqrt{h}} \right|^{-d-3-\delta} \left| \frac{y}{\sqrt{h}} \right|.
\end{align*}
First, we estimate
\begin{align*}
&\left| h^{-(d-1)/2} \int_{\R^{d-1}} \int_0^{g(X)+z} (K(X/\sqrt{h},y/\sqrt{h}) - K(X/\sqrt{h},0)) dy dX \right| \\
&\leq h^{-(d-1)/2} \int_{\R^{d-1}} \int_{-|g(X)|-|z|}^{|g(X)|+|z|} |K(X/\sqrt{h},y/\sqrt{h}) - K(X/\sqrt{h},0)| dy dX,
\end{align*}
and then use both the general Lipschitz bound on $K$, as well as the decaying Lipschitz bound to obtain
\begin{align*}
\leq& Ch^{-(d-1)/2} \int_{\R^{d-1}} \min\left(\Lip(K), \left( \frac{|X|}{\sqrt{h}} \right)^{-d-3-\delta} \right) \frac{(|g(X)| + |z|)^2}{\sqrt{h}} dX\\
\leq& Ch^{3/2+\delta/2} \int_{\R^{d-1} \cap \{|X|>\Lip(K)^{-1/(d+2+\delta)}\sqrt{h}\}} |X|^{-d-2-\delta} (|g(X)|+|z|)^2 dX\\
 &+ Ch^{-d/2}\int_{\R^{d-1} \cap \{|X|<\Lip(K)^{-1/(d+2+\delta)}\sqrt{h}\}} \Lip(K) (|g(X)| + |z|)^2 dX.
\end{align*}
We write $a = \Lip(K)^{-1/(d+2+\delta)}$ and apply Jensen's inequality to get
\begin{align*}
\leq& C'h^{3/2+\delta/2} \int_{(B_{a\sqrt{h}}^{d-1}(0))^c} |X|^{-d-3-\delta} (|g(X)|^2 + |z|^2) dX\\
 &+ C'h^{-d/2} \int_{B_{a\sqrt{h}}^{d-1}(0)} \Lip(K) (|g(X)|^2 + |z|^2) dX.
\end{align*}
We find bounds for all four summands successively. First, remembering the Taylor expansion on a ball, we get
\begin{align*}
&\int_{(B_{a\sqrt{h}}^{d-1}(0))^c} |X|^{-d-3-\delta} |g(X)|^2 dX\\ 
\leq& \int_{B_1^{d-1}(0) \setm B_{a\sqrt{h}}^{d-1}(0)} |X|^{-d-3-\delta} \|D^2g\|_{C^0}^2 |X|^4 dX\\
&+ \int_{(B_{1}^{d-1}(0))^c} \|g\|_{C^0} |X|^{-d-3-\delta} dX\\
=& \, C \int_{a\sqrt{h}}^1 r^{-1-\delta} dr + C \int_1^{\infty} r^{-5-\delta} dr \leq C' h^{-\delta/2} + C'  
\end{align*}
and the term is of order $h^{3/2}$. For the second bound, we first need an a-priori estimate on the order of $z$, which is given by the following lemma, the proof following this proof.

\begin{lemma}
\label{anis:z:bound}
Let $E_0 \subs \R^{d}$ be bounded and of the class $C^2$, and $K$ a suitable convolution kernel satisfying condition \textit{(II)} of Theorem~\ref{anis:main}. Then for $z(\cdot)$ as defined in Theorem~\ref{anis:main} there is an $\eps'>0$, such that $z(\cdot) \in \Oc(h^{3/4+\eps'})$.
\end{lemma}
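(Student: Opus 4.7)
The plan is to bound $|z(x)|$ uniformly in $x \in \partial E_0$ by comparing the convolution $K_h \ast \chi_{E_0}$ along the inner normal at $x$ with the convolution against the tangent half-space, making precise the intuition that the former exceeds $1/2$ at rate $|\ell|/\sqrt{h}$ while the comparison error is only $\Oc(h^{1/2})$. After setting up coordinates so that $x_0 \in \partial E_0$ is the origin with outer normal $e_d$, the $C^2$ boundary regularity and compactness of $\partial E_0$ yield uniform constants $\rho, M > 0$ and a function $g \in C^2(B_\rho^{d-1}(0))$ with $g(0) = 0 = \nabla g(0)$, $\|g\|_{C^2} \leq M$, representing $E_0 \cap B_\rho(0)$ as the subgraph $\{x_d < g(X)\} \cap B_\rho(0)$; in particular $|g(X)| \leq M|X|^2$.

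The key step is to compare $E_0$ with the tangent half-space $H := \{x_d < 0\}$ via
\begin{align*}
\bigl| (K_h \ast \chi_{E_0})(\ell e_d) - (K_h \ast \chi_H)(\ell e_d) \bigr| \leq \int_{E_0 \triangle H} K_h(\ell e_d - x)\, dx,
\end{align*}
and to show the right-hand side is $\Oc(h^{1/2})$ uniformly for $|\ell| \leq \rho/2$. The symmetric difference intersected with $B_\rho(0)$ lies in the thin curved slab $\{|x_d| \leq M|X|^2\}$, while the far part contributes only $\Oc(h^{1+\eps/2})$ by the decay $|K| \leq C|\cdot|^{-d-2-\eps}$. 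For the local part, after bounding by $\int_{|X|\leq \rho} M|X|^2 \sup_{|x_d| \leq M|X|^2} K_h(X, x_d - \ell)\, dX$, I would split at $|X| = \sqrt{h}$: the inner region uses the uniform bound $K_h \leq C h^{-d/2}$ to give $\Oc(h^{-d/2}) \cdot \Oc(h^{(d+1)/2}) = \Oc(h^{1/2})$, and the outer region uses $K_h \leq C h^{1+\eps/2}|X|^{-d-2-\eps}$ to give $C h^{1+\eps/2} \int_{\sqrt{h}}^{\rho} r^{-2-\eps}\, dr = \Oc(h^{1/2})$.

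Next, evaluating the half-space integral by Fubini with $\psi_h(t) := \int_{\R^{d-1}} K_h(U, t)\, dU$ gives
\begin{align*}
(K_h \ast \chi_H)(\ell e_d) = \tfrac{1}{2} - \tfrac{\ell}{2\sqrt{h}\, \mu_K(e_d)} + \Oc\bigl(\ell^2/h\bigr),
\end{align*}
where $\psi_h(0) = \tfrac{1}{2\sqrt{h}\, \mu_K(e_d)}$ by definition of the mobility, and $|\psi_h(t) - \psi_h(0)| \leq C|t|/h$ follows from a two-scale Lipschitz estimate on $K$ (the global Lipschitz constant for $|U| \leq 1$, the decaying Lipschitz bound for $|U| \geq 1$). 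Combining the two estimates, for any fixed $\eps' \in (0, 1/4)$ and $|\ell| = h^{3/4 + \eps'}$,
\begin{align*}
(K_h \ast \chi_{E_0})(-|\ell| e_d) \geq \tfrac{1}{2} + \tfrac{h^{1/4+\eps'}}{2\mu_K(e_d)} - C\bigl(h^{1/2 + 2\eps'} + h^{1/2}\bigr),
\end{align*}
and the main term dominates for small $h$ since $1/4 + \eps' < 1/2$; hence $-|\ell| e_d \in E_1^{K,h}$. The analogous argument on the outward side shows $+|\ell| e_d \notin E_1^{K,h}$, so $|z(x_0)| < h^{3/4+\eps'}$. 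Uniformity follows from the continuity of $1/\mu_K$ on $\Sd$ combined with compactness of $\partial E_0$, giving a uniform positive lower bound on $1/\mu_K(\nu(x))$.

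The main obstacle will be the two-scale estimate for the symmetric-difference integral: the pointwise bound $|K(u)| \leq C|u|^{-d-2-\eps}$ carries no information near the origin, so balancing the inner region (where only the uniform bound $K \leq K_{\max}$ is available) against the outer region (where the polynomial decay takes over) at the crossover scale $|X| \sim \sqrt{h}$ is essential for both contributions to match at order $h^{1/2}$. A similar balancing underlies the $\Oc(\ell^2/h)$ remainder in the half-space evaluation.
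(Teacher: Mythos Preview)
Your argument is correct and takes a genuinely different route from the paper. The paper works with the interior ball condition: it places the origin at depth $2ch^{3/4+\eps'}$ along the inner normal, fits a dome $B_{h^{3/8+\eps'/2}}(0)\cap\{y<ch^{3/4+\eps'}\}$ inside $E_0$, and then shows that the $K$-mass outside the ball of intermediate radius $h^{-1/8+\eps'/2}$ (after rescaling) is dominated by the $K$-mass in the thin strip $\{0<y<ch^{1/4+\eps'}\}$. This forces $\eps'$ to be small in terms of the decay exponent $\eps$, and the lower bound on the strip mass is obtained somewhat indirectly from a continuity argument for $1/\mu_K$ and a pointwise lower bound for $K$ near the hyperplane.

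Your tangent half-space comparison is more transparent: the $\Oc(h^{1/2})$ bound on $\int_{E_0\triangle H}K_h(\ell e_d-\cdot)$ is exactly the two-scale splitting at $|X|\sim\sqrt h$ you describe, and the half-space evaluation $(K_h\ast\chi_H)(\ell e_d)=\tfrac12-\tfrac{\ell}{2\sqrt h\,\mu_K(e_d)}+\Oc(\ell^2/h)$ follows from the marginal $\psi(s)=\int_{\R^{d-1}}K(U,s)\,dU$ being Lipschitz at $0$ (global Lipschitz bound for $|U|\le1$, decaying Lipschitz bound for $|U|\ge1$). The upshot is a leading term of order $h^{1/4+\eps'}$ against an error of order $h^{1/2}$, so any $\eps'\in(0,1/4)$ works---slightly sharper than the paper's range. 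Both proofs ultimately rely on the uniform positive lower bound for $1/\mu_K$; you cite continuity on $\Sd$ for this, which is precisely what the paper proves via the decaying Lipschitz hypothesis, so you should either verify that step or reference it explicitly.
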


Thus $z \in \Oc(h^{3/4+\eps'})$. This implies that
\begin{align*}
\int_{(B_{a\sqrt{h}}^{d-1}(0))^c} |X|^{-d-3-\delta} |z|^2 dX &\leq h^{3/2+2\eps'} \int_{a\sqrt{h}}^{\infty} r^{-5-\delta}\\
&\leq C h^{3/2+2\eps'} h^{-2-\delta/2} = Ch^{2\eps'-1/2-\delta/2}
\end{align*}
and the term is in $\Oc(h^{1+2\eps'})$. For the third summand we observe $g$ on a compact ball. Thus we can use the bound from its Taylor representation to obtain
\begin{align*}
h^{-d/2} \int_{B_{a\sqrt{h}}^{d-1}(0)} \Lip(K) |g(X)|^2 dX &\leq h^{-d/2} \int_{B_{a\sqrt{h}}^{d-1}(0)} \Lip(K) \|g\|_{C^2} |X|^4 dX\\
&\leq h^{-d/2} \Lip(K) \|g\|_{C^2} \int_0^{a\sqrt{h}} r^{d+2} dr\\
&\leq C h^{-d/2} (a\sqrt{h})^{d+3} \leq C' h^{3/2}.
\end{align*}
For the fourth and last term, we again use the a-priori bound of $z$. Then
\begin{align*}
h^{-d/2} \int_{B_{a\sqrt{h}}^{d-1}} \Lip(K) |z|^2 dX &\leq C h^{-1/2} |z|^2\\
&\leq C' h^{1+2\eps'}.
\end{align*}
Thus, this term is of order strictly higher than $1$. After proving the theorem, we know $z \in \Oc(h)$ and $\eps' = 1/4$.\\

To conclude the proof, we regard the term 
\begin{align*}
h^{-(d-1)/2} 2\int_{\R^{d-1}} K(X/\sqrt{h},0)\, g(X) dX.
\end{align*}
The function $g$ being chosen in $C_c^{2,\alpha}$, we can restrict the integral to a large $(d-1)$-dimensional ball. We can show, analogous to the proof of \textit{(I)}, that
\begin{align*}
\left| g(X) - \frac{1}{2} \sum_{i=1}^{d-1} \partial_i^2g(0) X_i^2 \right| \leq C|X|^{2+\alpha}.
\end{align*}
It remains to show, that $|X|^{2+\alpha}$ induces a finite higher order term. By substitution we get
\begin{align*}
h^{-d/2} \int_{\R^{d-1}} |K(X/\sqrt{h},0)| |X|^{2+\alpha} dX \leq h^{1+\alpha/2} \int_{\R^{d-1}} |K(X,0)| |X|^{2+\alpha} dX
\end{align*} 
and using the bound of $K$ we find
\begin{align*}
\int_{\R^{d-1}} |K(X,0)| |X|^{2+\alpha} dX &\leq \int_{\R^{d-1}} \min(\|K\|_{C^0}, |X|^{-d-2-\eps}) |X|^{2+\alpha}dX \\
&\leq C + \int_{(B_a^{d-1}(0))^c} |X|^{\alpha-d-\eps} dX\\
&\leq C + C'\int_a^{\infty} r^{\alpha -2-\eps} dr\\
&\leq C + C''[r^{\alpha-1-\eps}]_a^{\infty},
\end{align*}
which is finite, because $\alpha \leq 1$. Then we have
\begin{align*}
h^{-(d-1)/2} 2\int_{\R^{d-1}} K(X/\sqrt{h},0)\, g(X) dX =& \, h \, \sum_{i=1}^{d-1} \partial_i^2g(0) \int_{\R^{d-1}} K(X,0)X_i^2 dX\\
&+ \Oc(h^{1+\alpha/2}).
\end{align*}
By Proposition~\ref{div:sigma}, this sum is the anisotropic mean curvature of $E_0$ in $x_{0}$, $H_{\sigma}(x_{0};E_0)$ if 
\begin{align*}
\underset{\eps \rightarrow 0}{\lim} \underset{O \in \mathrm{O}(d)}{\sup} \int_{\R^{d-1}} |K \circ O(X,\eps f(X)) - K \circ O(X,0)| (1 + |X|^2) dX = 0,
\end{align*}
with $f(X) = (X \cdot MX) + a$ for any symmetric $(d-1)$-matrix $M$ and number $a$. We use our symmetrically descending Lipschitz bound to get the estimate
\begin{align*}
\underset{\eps \rightarrow 0}{\lim} \eps C \int_{\R^{d-1}} |f(X)| \min\left(\Lip(K), |X|^{-d-3-\delta}\right) (1 + |X|^2) dX.
\end{align*}
It remains to show that this integral is finite. For any $R>0$ the integrand is finite on $B_R(0)$. Outside of $B_R(0)$, we know $|f(X)| \leq C|X|^2$ for some $C<\infty$ and
\begin{align*}
\int_{(B_R^{d-1}(0))^c} |X|^2 |X|^{-d-3-\delta} |X|^2 dX &= \int_{(B_R^{d-1}(0))^c} |X|^{1-d-\delta} dX\\
&= \int_R^{\infty} r^{-1-\delta} dr < \infty.
\end{align*}
Thus the integral is finite, the limit is zero and by Proposition~\ref{div:sigma} and \eqref{anisotropic:curvature:expression}
\begin{align*}
\sum_{i=1}^{d-1} \partial_i^2g(0) \int_{\R^{d-1}} K(X,0)X_i^2 dX = H_{\sigma}(x;E_0).
\end{align*} 
We then collect all terms, to get
\begin{align*}
\frac{1}{\mu_K(e_d)} z(x_0) = -h \, H_{\sigma}(x_0;E_0) + \Oc(h^{1+2\eps'}) + \Oc(h^{3/2}) + \Oc(h^{1+\alpha/2}).
\end{align*}
We set $\tilde{\eps} = \min(2\eps', \alpha/2, 1/2) > 0$. Then
\begin{align*}
\frac{1}{\mu_K(e_d)} z(x_0) = -h \, H_{\sigma}(x_0;E_0) + \Oc(h^{1+\tilde{\eps}}).
\end{align*}
But this implies $z \in \Oc(h)$ and $\eps'=1/4$. We conclude
\begin{align*}
\frac{1}{\mu_K(e_d)} z(x_0) = -h \, H_{\sigma}(x_0;E_0) + \Oc(h^{1+\alpha/2}).& \qedhere
\end{align*}
\end{proof}

\begin{proof}[Proof of Lemma~\ref{anis:z:bound}]
Again, $E_0$ being a bounded set of class $C^2$, there is a number $R>0$, such that for every $x \in \partial E_0$ and
\begin{align*}
p = x \pm R\nu(x)
\end{align*}
we have
\begin{align*}
\{x\} = \overline{B_R(p)} \cap \partial E_0.
\end{align*}
Without loss of generality, we fix some pair $(x,p)$, with $p \in E_0$, a number $\eps'>0$ and a coordinate system, such that $\nu(x) = e_d$ and $x-2ch^{3/4+\eps'}e_d = 0$ for some $c$ with $c>1/2R$. Thus for some $h_0>0$ and all $h \in(0,h_0)$
\begin{align*}
B_{h^{3/8+\eps'/2}}(0) \cap \{y<ch^{3/4+\eps'}\} \subs B_R(p) \subs E_0.
\end{align*}
We then estimate and substitute as follows
\begin{align*}
(K_h \ast \chi_{E_0})(0) &\geq \int_{B_{h^{3/8+\eps'/2}}(0) \cap \{y<ch^{3/4+\eps'}\}} K_h(x) dx\\
&= \int_{B_{h^{-1/8+\eps'/2}}(0) \cap \{y<ch^{1/4+\eps'}\}} K(x) dx.
\end{align*}
With $K$ being even, we then want to show that for all $h$ small enough
\begin{align*}
\int_{(B_{h^{-1/8+\eps'/2}}(0))^c} K(x) dx < \int_{B_{h^{-1/8+\eps'/2}}(0) \cap \{ 0 < y < ch^{1/4+\eps'}\}} K(x) dx,
\end{align*}
which states that the mass of $K_h$ outside a ball much larger than $\sqrt{h}$ is dominated by the mass of $K_h$ in the upper half-space and inside this ball. Since $K_h$ is even and integrates to $1$, any half-space contains exactly the mass $1/2$. The set $E_0$ thus contains strictly more mass than $1/2$. We begin, by finding an upper bound of the left-hand side, using the asymptotic bound of $K$
\begin{align*}
\int_{(B_{h^{-1/8+\eps'/2}}(0))^c} K(x) dx &\leq \int_{(B_{h^{-1/8+\eps'/2}}(0))^c} C|x|^{-d-2-\eps} dx\\
&= C' \int_{h^{-1/8+\eps'/2}}^{\infty} r^{-3-\eps} dr\\
&= C'' h^{(1/8-\eps'/2)(2+\eps)}.
\end{align*}
To find a lower bound of the right-hand side, we need to find a lower bound of $1/\mu_K$. Since we have given that the mobility is positive, we show that $1/\mu_K$ is continuous on the compact set $\Sd$. Let $n,n' \in \Sd$ with $|n-n'|<\delta'$ and $n' = On$. Then we obtain
\begin{align*}
&\left| \frac{1}{\mu_K(n)} - \frac{1}{\mu_K(n')} \right| \\
&\quad= 2 \left| \int_{\{(x \cdot n)=0\}} K(x) d\mathcal{H}^{d-1}(x) - \int_{\{(x \cdot n')=0\}}K(x)d\mathcal{H}^{d-1}(x) \right|\\
&\quad \leq 2 \int_{\{(x \cdot n)=0\}} |K(x) - K(Ox)| d\mathcal{H}^{d-1}(x).
\end{align*}
Since $|n-n'|<\delta'$ we have $|x - Ox| < \delta'|x|$. We then can apply the decaying Lipschitz bound in \textit{(II)} to get
\begin{align}
\left| \frac{1}{\mu_K(n)} - \frac{1}{\mu_K(n')} \right| &\leq 2 \int_{\{(x \cdot n)=0\}} C \min(\Lip(K), |x|^{-d-3-\delta}) \delta' |x| d\mathcal{H}^{d-1}(x) \notag\\
\label{eq:mobility continuity}&= \delta' 2C \int_{\R^{d-1}} \min(\Lip(K)|X|, |X|^{-d-2-\delta})dX.
\end{align}
Since this integral is finite we have proven that $1/\mu_K$ is uniformly continuous. Thus $1/\mu_K$ has a positive minimum. Due to the asymptotic and total bound on $K$, there has to be a domain $D$ on a large ball, with measure bounded from below, on any hyperplane $\{(x \cdot n)=0\}$, where $K$ is larger than a constant. $K$ being Lipschitz continuous, there are positive numbers $\tilde{r}$ and $\tilde{c}$ with 
\begin{align*}
K|_{D+[0,\tilde{r}]n} > \tilde{c}
\end{align*}
and for $h$ small enough $D+[0,\tilde{r}]n \subs B_{h^{-1/8+\eps'/2}}(0)$. Then
\begin{align*}
\int_{B_{h^{-1/8+\eps'/2}}(0) \cap \{0 \leq |y| < ch^{1/4+\eps'}\}} K(x) dx &> \int_{(D+[0,\tilde{r}]n) \cap \{ 0 < y < ch^{1/4+\eps'}\}} K(x) dx\\
&\geq C'''\, \tilde{c} h^{1/4+\eps'}.
\end{align*}
The statement is then true if
\begin{align*}
C'' h^{(1/8-\eps'/2)(2+\eps)} < C''' \, \tilde{c} h^{1/4+\eps'},
\end{align*}
which is equivalent to
\begin{align*}
h^{\eps/8-\eps'/2} < C'''',
\end{align*}
which, for all $\eps$ is true for $\eps'$ and $h$ uniformly small enough.
\end{proof}

\begin{proof}[Proof of Theorem~\ref{negative:Ker} in case of (I)]
In the proof of Theorem~\ref{anis:main} \textit{(I)} we used the non-negativity of $K$ only to show the a priori bound of $z$. It thus remains to prove that $z$ is in $\Oc(h)$. We proceed similarly to the non-negative case. Since $E_0$ is bounded and of class $C^2$, there is a positive number $R$ such that for every $x \in \partial E_0$
\begin{align*}
\{x\} = \overline{B_R(y)} \cap \partial E_0 \text{ for }p = x \pm R\nu(x).
\end{align*}
Without loss of generality, we fix some pair $(x,p)$, with $p \in E_0$, and a coordinate system, such that $\nu(x)=e_d$ and $x-2che_d=0$ for some $c$ with
\begin{align*}
c > \frac{L^2}{2R},
\end{align*}
where $L$ is such that $B_L(0)\sups \supp(K)$. Then $B_{\sqrt{h}L}(0) \sups \supp(K_h)$ and, for $h$ small enough, 
\begin{align*}
B_{L\sqrt{h}}(0)\cap \{y < 0\} \subs B_R(y) \subs E_0.
\end{align*}
For a visualization see Figure~\ref{fig:star shaped}. $K_h$ being an even function with support in $B_{\sqrt{h}L}(0)$ and $\int_{\R^d}K_h=1$, we have
\begin{align*}
\int_{B_{L\sqrt{h}}(0)\cap \{y < 0\}} K_h(x) dx =\frac{1}{2}.
\end{align*}
We remember that $0$ is in $E^{K,h}_1$ if and only if
\begin{align*}
(K_h \ast \chi_{E_0})(0) = \int_{E_0} K_h(x) dx > \frac{1}{2}.
\end{align*}
Thus, $0$ is in $E_1^{K,h}$ if and only if 
\begin{align*}
\int_{E_0 \cap \{y \geq 0\}} K_h(x) dx = \int_{E_0 \cap B_{\sqrt{h}L}(0) \cap \{y \geq 0\}} K_h(x) dx > 0.
\end{align*}
This is true for all $h$ small enough if the domain of integration $A \coloneqq E_0 \cap B_{\sqrt{h}L}(0) \cap \{y \geq 0\}$ is star-shaped with respect to $0$ for all $h$ small enough.\\

Let us suppose that the domain is star-shaped with respect to $0$, then we we can express the integral in polar coordinates. We define a function $S$: $\Sd \rightarrow \R_{\geq 0}$ such that 
\begin{align*}
E_0 \cap B_{\sqrt{h}L}(0) \cap \{y \geq 0\} = \{x \in \R^d | x = s\theta \text{, with } s<S(\theta)\}.
\end{align*}

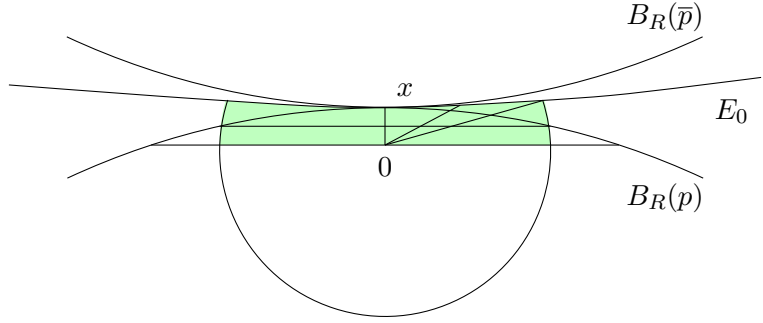
\begin{figure}
\centering
\begin{tikzpicture}

\fill [green!25] (0,-.5) circle (2.2);

\fill [white] plot [smooth] coordinates {(-5,0.3) (-3,.15) (-1,0.02) (0,0) (1,0.03) (3,0.16) (5,0.4)} -- (1,1.7) -- (-1,1.7) -- (-5,0.3);

\fill [white] (-2.2,-0.5) -- (-2.2,-2.7) -- (2.2,-2.7) -- (2.2,-0.5) -- (-2.2,-0.5);

\draw plot [smooth] coordinates {(-5,0.3) (-3,.15) (-1,0.02) (0,0) (1,0.03) (3,0.16) (5,0.4)};

\draw (4.23, -0.94) arc (65:115:10);

\draw (-4.23, 0.94) arc (245:295:10);

\draw (0,0) -- (0,-.5);

\draw (-3.11,-.5) -- (3.11,-.5);

\draw (-2.2,-0.25) -- (2.2,-0.25);

\draw (-2.09,0.1) arc (162:378:2.2);

\draw (0,-.5) -- (1,0.03);

\draw (0,-.5) -- (2.09,0.09);

\node [above right=1pt of {(0,0)}] {$x$};

\node [below=1pt of {(0,-.5)}] {$0$};

\node [below right =2pt of {(4.2,.3)}] {$E_0$};

\node [below right=3pt of {(3,-.8)}] {$B_R(p)$};

\node [above right=3pt of {(3,.8)}] {$B_R(\overline{p})$};
\end{tikzpicture}
\caption{The support of $K_h$ between the hyperplane and $\partial E_0$ is star shaped with respect to $0$}
\label{fig:star shaped}
\end{figure}
By construction this set contains $B_{\sqrt{h}L}(0) \cap \{0 \leq y < ch\}$. Since for all $\theta \in \mathbb{S}^{d-1}$ and $r>0$
\begin{align*}
\int_0^r s^{d-1} K(s\theta) ds \geq 0
\end{align*}
we obtain that for any $D \subs \Sd$
\begin{equation}
\label{eq:star-shaped complement}
\int_D \int_0^{S(\theta)} s^{d-1} K_h(s\theta) ds\, d\mathcal{H}^{d-1}(\theta) \geq 0.
\end{equation}
We know there to be a $\tilde{\theta} \in \Sd$ such that for all $r>0$
\begin{align*}
\int_0^r s^{d-1} K(s\tilde{\theta}) ds > 0.
\end{align*}
The kernel being even we can choose $\tilde{\theta}$ such that $(\tilde{\theta} \cdot e_d) \geq 0$, where we know the function $S$ to be positive. $K$ and thus $K_h$ being H\"older continuous implies, that there is a neighborhood of $\tilde{\theta}$, which we call $\tilde{D}$ with
\begin{equation}
\label{eq:star-shaped positive}
\int_{\tilde{D}} \int_0^{S(\theta)} s^{d-1} K_h(s\theta) ds \, d\mathcal{H}^{d-1}(\theta) > 0.
\end{equation} 
Using \eqref{eq:star-shaped positive} and \eqref{eq:star-shaped complement} with $D= (\tilde{D})^c$ we obtain
\begin{align*}
\int_A K_h(x)dx = \int_{\Sd} \int_0^{S(\theta)} s^{d-1} K_h(s\theta) ds \, d\mathcal{H}^{d-1}(\theta) >0.
\end{align*}

It remains to show that $A$ is star-shaped with respect to $0$ for $h$ small enough. We remember the surface function $g$ and observe that $A$ is star-shaped with respect to $0$ if for all $X \in B_{\sqrt{h}L}^{d-1}$ it holds that
\begin{align*}
2ch + g(X) \geq \partial_X g(X).
\end{align*}
On the finite ball $B_{\sqrt{h}L}^{d-1}$ we can estimate $|g|$ and $|\partial_X g(X)|$ by $C|X|^2$ using Taylor's theorem as before. Thus we can estimate
\begin{align*}
(\partial_X g(X) - g(X))|_{B_{\sqrt{h}L}^{d-1}(0)} \leq (2C|X|^2)|_{B_{\sqrt{h}L}^{d-1}(0)} \leq 2CL^2h.
\end{align*}
Since we can always choose $c$ to be larger in a uniform way and $C$ and $L$ are uniformly bounded properties of $E_0$ and $K$ respectively we choose
\begin{align*}
c = L^2 \max \left( \frac{1}{2R}, C \right).
\end{align*}
This concludes the proof.
\end{proof}

\begin{proof}[Proof of Theorem~\ref{negative:Ker} in case of (II)]
We again only have to show the a priori bound; here this means that $z$ is in $\Oc(h^{3/4+\eps'})$ for some $\eps' >0$.  With $E_0$ being a bounded set of class $C^2$, there is a number $R>0$, such that for every $x \in \partial E_0$ and
\begin{align*}
p = x \pm R\nu(x)
\end{align*}
we have
\begin{align*}
\{x\} = \overline{B_R(p)} \cap \partial E_0.
\end{align*}
Without loss of generality, we fix some pair $(x,p)$, with $p \in E_0$, a number $\eps'>0$ and a coordinate system, such that $\nu(x) = e_d$ and $x-2ch^{3/4+\eps'}e_d = 0$ for some $c$ with $c>1/(2R)$. Thus there is some $h_0>0$ such that for all $h \in(0,h_0)$
\begin{align*}
B_{h^{3/8+\eps'/2}}(0) \cap \{y<ch^{3/4+\eps'}\} \subs B_R(p) \subs E_0.
\end{align*}
Analogously, we show that
\begin{equation}
\label{eq:surface upper bound}
B_{h^{3/8+\eps'/2}}(0) \cap \{y \geq 3ch^{3/4+\eps'}\} \subs E_0^c.
\end{equation} 

Now we want to show that $\int_{E_0}K_h(x) dx > 1/2$. Since $\int_{\{y<0\}}K_h(x)dx = 1/2$, we compare $E_0$ to the lower half space. It is then enough to show that $\int_{E_0 \cap \{y>0\} \cap B_{3/8 + \eps'/2}(0)} K_h(x) dx$ is larger than the absolute mass of $K_h$ outside of the ball $B_{3/8 + \eps'/2}(0)$, where we do not generally know whether a given point is in $E_0$. We thus want to show that for $h$ small enough
\begin{equation}
\label{eq:main comparison negative kernel}
\int_{E_0 \cap \{y>0\} \cap B_{3/8 + \eps'/2}(0)} K_h(x) dx > \int_{(B_{3/4 + \eps'/2}(0))^c} |K_h(x)|dx.
\end{equation}
We first find a lower bound for the left-hand side of \eqref{eq:main comparison negative kernel}. To this end, we again first claim the set $A \coloneqq E_0 \cap \{y>0\} \cap B_{3/8 + \eps'/2}(0)$ to be star-shaped with respect to $0$. If this is true, we can again construct a function $S$ with
\begin{align*}
A = \left\{ x \in \R^d \Bigg| |x| < S\left(\frac{x}{|x|}\right) \right\}.
\end{align*}
Then we know for all measurable sets $D \subs \Sd$ that
\begin{align*}
\int_D \int_0^{S(\theta)} s^{d-1} K_h(s \theta)\, ds \, d\mathcal{H}^{d-1}(\theta) \geq 0.
\end{align*}
Since $K$ is continuous and $K(0)>0$ there are positive real numbers $\tilde{r}$ and $\tilde{c}$ with $K|_{B_{\tilde{r}}(0)} \geq \tilde{c}$. Independent of direction, we then find a cylinder for $c',r'>0$ with
\begin{align*}
U \coloneqq \left\{ (X,y) \in \R^d \Big| |X|<r', 0<y<c' \right\} \subs B_{\tilde{r}}(0).
\end{align*}
For the contracted kernel $K_h$ this implies
\begin{align*}
K_h|_{\sqrt{h}\, U} \geq \tilde{c} h^{-d/2}.
\end{align*}
We construct a cone inside of this cylinder (see Figure~\ref{fig:positive cone})
\begin{align*}
V \coloneqq \conv\left( \{0\} \cup B_{\sqrt{h}r'}^{d-1}(0,3ch^{3/4 + \eps'}) \right) \subs \sqrt{h}\, U
\end{align*}
with $0 \in E_0$ and by \eqref{eq:surface upper bound} $B_{\sqrt{h}r'}^{d-1}(0,3ch^{3/4 + \eps'})$ is disjoint from $E_0$. This allows us to estimate the mass along the horizontal hyperplane in the upper half-space. We first use the assumed non-negativity of the integrals along rays to reduce the set to the cone:
\begin{align*}
\int_{A} K_h(x) dx &= \int_{\Sd} \int_0^{S(\theta)} s^{d-1} K_h(s\theta) \, ds \, d\mathcal{H}^{d-1}(\theta) \\
&\geq \int_{\{ \theta \in \Sd | \exists \lambda > 0: \lambda \theta \in V\}} \int_0^{S(\theta)} s^{d-1} K_h(s\theta) \, ds \, d\mathcal{H}^{d-1}(\theta)\\
&=\int_{E_0 \cap V} K_h(x) dx.
\end{align*}
On $E_0 \cap V$, the kernel $K_h$ is bounded from below by $\tilde{c}h^{-d/2}$. Furthermore, by construction
\begin{align*}
\{(X,y) \in V | y<ch^{3/4+\eps'}\} \subs E_0 \cap V.
\end{align*}
We calculate the volume of this cone, which has height $ch^{3/4+\eps'}$ and radius $(r'/3)h^{1/2}$. Thus there is a positive constant $C'$ with
\begin{align*}
\mathcal{L}^d\left( \{(X,y) \in V | y<ch^{3/4+\eps'}\} \right) \geq C'h^{d/2+1/4+\eps'}.
\end{align*}
We conclude for the lower bound that
\begin{align*}
\int_{E_0 \cap V} K_h(x) dx \geq \tilde{c} \, C' h^{1/4 + \eps'}.
\end{align*}
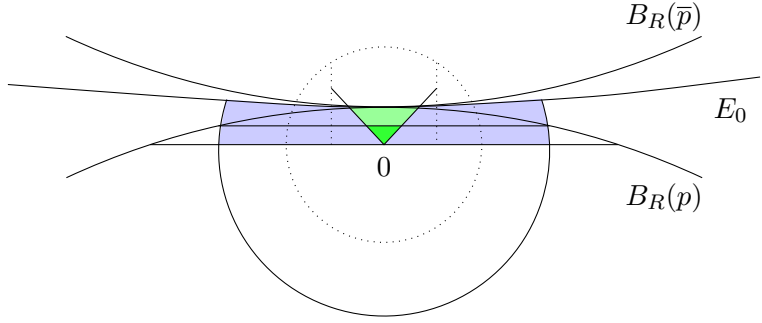
\begin{figure}
\centering
\begin{tikzpicture}

\fill[blue!20] (-2.1,0.1) -- (-1,0.02) -- (0,0) -- (1,0.03) -- (2.1,0.11) -- (2.18,-0.25) -- (2.2,-0.5) -- (-2.2,-0.5) --cycle;

\fill[green!80] (0,-.5) -- (.232,-.25) -- (-.232,-.25) -- cycle;

\fill[green!40] (.233,-.25) -- (-.233,-.25) -- (-.466,0) -- (.466,0) -- cycle;

\draw plot [smooth] coordinates {(-5,0.3) (-3,.15) (-1,0.02) (0,0) (1,0.03) (3,0.16) (5,0.4)};

\draw (4.23, -0.94) arc (65:115:10);

\draw (-4.23, 0.94) arc (245:295:10);

%\draw (0,0) -- (0,-.5);

\draw (-3.11,-.5) -- (3.11,-.5);

\draw (-2.2,-0.25) -- (2.2,-0.25);

\draw (-2.09,0.1) arc (162:378:2.2);

\draw[dotted] (0,-.5) circle (1.3);

\draw[dotted] (-.7,-.5) -- (-.7,.59);

\draw[dotted] (.7,.59) -- (.7,-.5);

%\draw (-2.2,.25) -- (2.2,.25);

\draw (-.7,.25) -- (0,-.5) -- (.7,.25);

%\node [above right=1pt of {(0,0)}] {$x$};

\node [below=1pt of {(0,-.5)}] {$0$};

\node [below right =2pt of {(4.2,.3)}] {$E_0$};

\node [below right=3pt of {(3,-.8)}] {$B_R(p)$};

\node [above right=3pt of {(3,.8)}] {$B_R(\overline{p})$};
\end{tikzpicture}
\caption{The positive mass we can estimate on the upper half-space. The mass of the blue area is non-negative due to the non-negativity of integrals over rays. The green cone has base diameter $\Oc(\sqrt{h})$ and on it $K$ is positively bounded from below. Using the minimal distance to $\partial E_0$ we estimate its volume.}
\label{fig:positive cone}
\end{figure}
%We first estimate the collected mass below the hyperplane where $y=0$. We use the general bound on $|K|$ to obtain that
%\begin{align*}
%\int_{E_0 \cap \{y < 0\}} K_h(x) dx \; &\geq\; \frac{1}{2} - \int_{(B_{h^{3/8+\eps'/2}}(0))^c} |K_h(x)| dx\\
%&= \frac{1}{2} - \int_{(B_{h^{-1/8+\eps'/2}}(0))^c} |K(x)| dx\\
%\end{align*}
%The last term estimates the impact of the entire complement to the ball on which we estimate more precisely. By definition of thresholding, the point $0$ is in the set $E_1^{K,h}$ if
%\begin{align*}
%\int_{E_0 \cap \{y \geq 0\} \cap B_{h^{3/8+\eps'/2}}(0)} K_h(x) dx > \int_{(B_{h^{-1/8+\eps'/2}}(0))^c} |K(x)| dx.
%\end{align*} 
%That is, the mass of $K_h$ on the intersection of $E_0$ and the upper half-space is larger than all positive and negative mass outside of the ball $B_{h^{3/8+\eps'/2}}(0)$.
Next we estimate the right-hand side of \eqref{eq:main comparison negative kernel} from above. By the general bound on $|K|$ we obtain that
\begin{align*}
\int_{(B_{h^{3/8+\eps'/2}}(0))^c} |K_h(x)| dx &= \int_{(B_{h^{-1/8+\eps'/2}}(0))^c} |K(x)| dx\\
&\leq \int_{(B_{h^{-1/8+\eps'/2}}(0))^c} |x|^{-d-2-\eps} dx\\
&= c' \int_{h^{-1/8+\eps'/2}}^{\infty} r^{-3-\eps} dr\\
&= c'' h^{(1/8-\eps'/2)(2+\eps)}.
\end{align*}
%Next we estimate the left-hand side from below. We again first claim the set $A \coloneqq E_0 \cap \{y \geq 0\} \cap B_{h^{3/8+\eps/2}}$ to be star-shaped with respect to $0$. If this is true, we again construct a function $S$ and for all measurable sets $D \subs \Sd$ we know that
%\begin{align*}
%\int_D \int_0^{S(\theta)} s^{d-1} K_h(s \theta) ds d\mathcal{H}^{d-1} \geq 0.
%\end{align*}
%We further know that $K(0)>0$. Since the kernel is continuous there exist positive numbers $\tilde{c}, \tilde{r}>0$ such that
%\begin{align*}
%K|_{B_{\tilde{r}(0)}} \geq \tilde{c}.
%\end{align*}
%Thus, for $h$ small enough there is a ball $\tilde{B}$ around $e_d$ on $\Sd$ with measure bounded from below and $S|_{\tilde{B}} < \tilde{r}$. We obtain that
%\begin{align*}
%\int_0^{r} s^{d-1} K(s\theta) ds d\mathcal{H}^{d-1}(\theta) \geq \overline{c}r.
%\end{align*}
%By substitution we get that
%\begin{align*}
%\int_0^{r} s^{d-1} K_h(s\theta) ds d\mathcal{H}^{d-1}(\theta) \geq \overline{c}h^{-d/2}r.
%\end{align*}
%Since for any direction $\theta$ with $(\theta \cdot e_d) \geq 0$ we have $S(\theta) \geq ch^{3/4+\eps'}$ we obtain
%\begin{align*}
%\int_{\tilde{B}} \int_0^{S(\theta)} s^{d-1}K_h(s\theta) ds d\mathcal{H}^{d-1}(\theta) \geq c''h^{(d-1)/2}h^{-d/2}h^{3/4+\eps'} = c''h^{1/4+\eps'}.
%\end{align*}
We conclude that $0$ is in $E_1^{K,h}$ if
\begin{align*}
c'' h^{(1/8-\eps'/2)(2+\eps)} < \tilde{c}C' h^{1/4+\eps'}.
\end{align*}
This is equivalent to 
\begin{align*}
h^{\eps/8-(1/2)(4+\eps)\eps'} < \frac{\tilde{c}C'}{c''},
\end{align*}
which is true for $\eps'$ and $h$ small enough only depending on $\eps$. It remains to show that the set $A \coloneqq (E_0 \cap \{y \geq 0\} \cap B_{h^{3/8+\eps'/2}}(0))$ is star-shaped with respect to $0$. This is again true if for all $X \in B_{h^{3/8+\eps'/2}}^{d-1}(0)$ it holds that
\begin{align*}
ch^{3/4+\eps'} + g(X) \geq \partial_X g(X).
\end{align*}
Still on a finite ball, we can again obtain $C|X|^2$ as a general bound of $|g|$ and $|\partial_X g(X)|$. Thus we have that
\begin{align*}
(|g(X)|+|\partial_X g(X)|)|_{ B_{h^{3/8+\eps'/2}}^{d-1}(0)} \leq 2Ch^{3/4+\eps'}.
\end{align*}
As $C$ depends only on $g$, this yields the claim if we choose $c=2C$.
\end{proof}

\begin{proof}[Proof of Proposition~\ref{prop:monotonicity of energies} on a bound for the adjusted $K_h$-perimeter]
%Let $E$ be a Borel set and $K$ a kernel satisfying the conditions of Proposition~\ref{anis:per}. Then $\sigma_K$ is positive and positively bounded from below. If $E$ is not a set of finite perimeter, then $P_{\sigma_K}(E)=\infty$ and the statement is trivially true. 
Sets of finite perimeter can be approximated in measure and anisotropic perimeter by sets that are smooth and bounded \cite[Theorems 13.8 and 20.6]{Maggi}. We thus assume $E$ to be such a set. From the proof of Proposition~\ref{anis:per} we know that
\begin{align*}
P_{K,h}(E) &= \int_{\Rd} K(y) \frac{1}{\sqrt{h}} \int_{\Rd} \chi_E(x-\sqrt{h}y) \chi_{E^c}(x) dx\, dy \\
&= \int_{\Sd} \int_0^{\infty} r^d K(r\theta) dr \frac{1}{\sqrt{h}} \int_{E^c} \chi_E(x - \sqrt{h}\theta) dx\, d\mathcal{H}^{d-1}(\theta) 
\end{align*}
and
\begin{align*}
P_{\sigma_K}(E) &= \int_{\Rd} K(y) \frac{1}{2} \int_{\partial^* E} |\nu(x) \cdot y| d\mathcal{H}^{d-1}(x)\, dy \\
&= \int_{\Sd} \int_0^{\infty} r^d K(r\theta) dr \int_{\partial^* E}(\nu(x) \cdot \theta)_+ d\mathcal{H}^{d-1}(x)\, d\mathcal{H}^{d-1}(\theta).
\end{align*}
Since for all $\theta \in \Sd$
\begin{align*}
\int_0^{\infty} r^d K(r\theta) dr \geq 0
\end{align*}
it is enough to show
\begin{align*}
\frac{1}{\sqrt{h}} \int_{E^c} \chi_E(x-\sqrt{h}\theta) dx \leq \int_{\partial^* E} (\nu(x) \cdot \theta)_+ d\mathcal{H}^{d-1}(x).
\end{align*}
For fixed $\theta$, we first rewrite the integral as the measure of a set
\begin{align*}
\int_{\Rd} \chi_E(x-\sqrt{h}\theta) \chi_{E^c}(x) dx = \mathcal{L}^d\left(\{x \in E^c | x-\sqrt{h}\theta \in E\}\right).
\end{align*}
The line from $x-\sqrt{h}\theta$ to $x$ crosses the boundary of $E$ outwardly. Since $E$ is smooth we define $(\partial E)_{\theta}^+ \coloneqq \{y \in \partial E | (\nu_E(y) \cdot \theta) >0\}$ and
\begin{align*}
&\mathcal{L}^d\left(\{x \in E^c | x-\sqrt{h}\theta \in E\}\right)\\
&\leq  \mathcal{L}^d\left(\{x \in \R^d | \exists y \in (\partial E)_{\theta}^+, t \in [0,1]: x = y + \sqrt{h} t \theta \}\right).
\end{align*}
Since $E$ has smooth boundary, $(\partial E)_{\theta}^+$ is $\mathcal{H}^{d-1}$-measurable and can be split into countably many subsets $A_l$, such that each can be parametrized as the graph of a function $f_l$ on $G_l$ in a coordinate system $\tau_i$, where $\theta = \tau_d$. We again write $X=(x \cdot \tau_1, \ldots, x \cdot \tau_{d-1})$. Thus
\begin{align*}
& \mathcal{L}^d\left(\{x \in \R^d | \exists y \in (\partial E)_{\theta}^+, t \in [0,1]: x = y + \sqrt{h} t \theta \}\right)\\
&\leq \sum_{l \in \mathbb{N}}  \mathcal{L}^d\left(\{x \in \R^d | \exists y \in A_l, t \in [0,1]: x = y + \sqrt{h} t \theta \}\right) \\
&= \sum_{l \in \mathbb{N}} \int_{G_l} (f_l(X) + \sqrt{h} - f_l(X)) dX \\
&= \sqrt{h} \sum_{l \in \mathbb{N}}  \mathcal{L}^{d-1}\left( G_l \right).
\end{align*}
Since $f_l$ are smooth functions they are Lipschitz continuous on compact subsets of $G_l$. Additionally, $(\nu_E(x) \cdot \theta)$ is positive and measurable and we can represent the outer normal in terms of $f$ as
\begin{align*}
\nu_E(x) = \frac{(\partial_{\tau_1} f(x), \ldots, \partial_{\tau_{d-1}} f(x), 1)}{\sqrt{1 + |\nabla f(x)|^2}}.
\end{align*}
We obtain
\begin{align*}
(\nu_E(x) \cdot \theta)|_{A_l} = (\nu_E(f(X)) \cdot \tau_d) = \left( 1 + |\nabla f(X)|^2 \right)^{-1/2} = (Jf(X))^{-1}.
\end{align*}
Thus we can apply the area function for injective maps on a disjoint compact covering of $G_l$ and
\begin{align*}
\int_{A_l} (\nu_E(x) \cdot \theta) d\mathcal{H}^{d-1}(x) &= \int_{f_l(G_l)} (\nu_E(x) \cdot \theta) d\mathcal{H}^{d-1}(x) \\
&=  \int_{G_l} (\nu_E(f(X)) \cdot \tau_d) Jf(X) dX \\
&= \int_{G_l} (Jf(X))^{-1} Jf(X) dX\\
&=  \mathcal{L}^{d-1}\left( G_l \right).
\end{align*}
Collecting these results we obtain that since $(A_l)_{l\in \mathbb{N}}$ is a disjoint covering of $(\partial E)_{\theta}^+$
\begin{align*}
\frac{1}{\sqrt{h}} \int_{E^c} \chi_E(x-\sqrt{h}\theta) dx &\leq \sum_{l \in \mathbb{N}}  \mathcal{L}^{d-1}\left( G_l \right) \\
&= \sum_{l \in \mathbb{N}} \int_{A_l} (\nu_E(x) \cdot \theta) d\mathcal{H}^{d-1}(x) \\
&= \int_{(\partial E)_{\theta}^+} (\nu_E(x)) \cdot \theta) d\mathcal{H}^{d-1}(x) \\
&= \int_{\partial E}  (\nu_E(x)) \cdot \theta)_+ d\mathcal{H}^{d-1}(x).
\end{align*}
This concludes the proof for bounded smooth sets. By \cite[Theorem 13.8]{Maggi}, for any set of finite perimeter $E$ with $|E|<\infty$ there is a sequence of bounded sets with smooth boundary $E_s$ converging to $E$ in measure and perimeter. By \cite[Theorem 20.6]{Maggi}, since $\sigma_K$ is continuous, $P_{\sigma_K}(E_s)$ converges to $P_{\sigma_K}(E)$. It remains to show that for fixed $h$ and $K$, $E_s$ converges in $P_{K,h}$. We estimate
\begin{align*}
&|P_{K,h}(E) - P_{K,h}(E_s)|\\
&\leq \frac{1}{\sqrt{h}} \int_{\Rd} \int_{\Rd} K_h(x-y) \left| \chi_E(x)\chi_{E^c}(y) - \chi_{E_s}(x)\chi_{E_s^c}(y) \right| dx dy,
\end{align*}
which is bounded by
\begin{align*}
\frac{1}{\sqrt{h}} \int_{\Rd} \int_{\Rd} |K_h(x-y)| \left( \chi_{E \triangle E_s}(x) \chi_{E^c \cup E_s^c}(y) + \chi_{E \cup E_s}(x) \chi_{E^c \triangle E_s^c}(y) \right) dx dy.
\end{align*}
Since $E \triangle E_s = E^c \triangle E_s^c$ and due to a reciprocity of impact argument we can estimate this by
\begin{align*}
\frac{2}{\sqrt{h}} \int_{\Rd} \int_{\Rd} |K_h(x-y)| \chi_{E \triangle E_s}(x)dy dx = \frac{2\|K\|_{L^1}}{\sqrt{h}} |E \triangle E_s|,
\end{align*}
which converges to zero. We obtain 
\begin{align*}
P_{K,h}(E) = \underset{s \rightarrow 0}{\lim}P_{K,h}(E_s) \leq \underset{s \rightarrow 0}{\lim}P_{\sigma_K}(E_s) = P_{\sigma_K}(E),
\end{align*}
which concludes the proof.
\end{proof}

\section{Kernel constructions}
\label{sec:kernel construction}

\subsection{Tension- and mobility generating functions}
\label{subsec:tension and mobility}

In Section~\ref{sec:initial step}, we analyzed the connection between kernel and approximated movement. Given a kernel, we can calculate the resulting motion law directly. But more often, we have a given motion law and want to construct a kernel approximating this movement. Thus we have given a surface tension $\sigma$ and mobility function $\mu$ and want to construct a convolution kernel $K$. This can be expressed as two inverse problems.\\

We have already seen that $K$ depends on $\sigma$ and $\mu$ only through the tension generating directional distribution
\begin{align*}
A(\theta) = \int_0^{\infty} r^d K(r\theta) dr
\end{align*}
and the mobility generating directional distribution 
\begin{align*}
B(\theta) = 2\int_0^{\infty} r^{d-2} K(r\theta) dr.
\end{align*}
If we have obtained $A$ and $B$ we can explicitly calculate a kernel $K$. Note that there are infinitely many options. To be able to influence $A$ and $B$ separately we need to be able to shift mass inwards and outwards. We choose for some functions $f$,$g$: $\mathbb{S}^{d-1} \rightarrow \R_{\geq 0}$, with $r \in \R_{\geq 0}$ and $\theta \in \Sd$
\begin{align*}
K(r\theta) \coloneqq f(\theta) \left[ r^2(g(\theta) - r)^2 \right] \chi_{[0,g(\theta)]}(r).
\end{align*}
In any direction, this is a polynomial cut off in its roots. To find $f$ and $g$, we first calculate
\begin{align*}
\int_0^{a}r^m(a-r)^2 dr = \frac{2a^{m+3}}{m^3 + 6m^2 +11m+6}.
\end{align*}
We can then calculate $A$ and $B$ for this kernel:
\begin{align*}
A(\theta) &= \frac{2f(\theta)g(\theta)^{d+5}}{d^3 + 12d^2 + 47d + 60}, \\
B(\theta) &= \frac{4f(\theta)g(\theta)^{d+3}}{d^3 + 6d^2 + 11d + 6}.
\end{align*}
By division of functions we obtain 
\begin{align*}
g(\theta) = \sqrt{2\frac{d^3 + 12d^2 + 47d + 60}{d^3 + 6d^2 + 11d + 6} \frac{A(\theta)}{B(\theta)}}.
\end{align*}
We can then input our result for $g$ in the equation for $B$ to obtain
\begin{align*}
f(\theta) = 2^{-(d+7)/2} \frac{\left( (d^3 + 6d^2 + 11d + 6)B(\theta) \right)^{(d+5)/2}}{\left( (d^3 + 12d^2 + 47d + 60) A(\theta) \right)^{(d+3)/2}}.
\end{align*}
We obtain that kernels constructed in this way are applicable to the results of this paper.
\begin{prop}
Let $A$ and $B$: $\Sd \rightarrow \R_+$ be positive, even and H\"older continuous functions. Then a kernel $K$, constructed in the given manner is non-negative, even, compactly supported and H\"older continuous, thus the normalized kernel $K/\|K\|_1$ satisfies the conditions of Theorem~\ref{anis:main}.
\end{prop}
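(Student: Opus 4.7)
My plan is to verify the claimed properties of $K$ one at a time and then to observe that normalization preserves them all while enforcing unit mass. Non-negativity is immediate from $f\ge 0$ and $r^2(g(\theta)-r)^2\ge 0$. Evenness reduces to checking $f(-\theta)=f(\theta)$ and $g(-\theta)=g(\theta)$, which is visible from the explicit formulas once I use that $A$ and $B$ are even by hypothesis. For compact support I will use that $g$ is continuous on the compact sphere $\Sd$, hence bounded, so $\supp K\subset \overline{B_{\|g\|_\infty}(0)}$.

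The next step is H\"older continuity of $f,g$ on $\Sd$. Since $A,B>0$ are H\"older continuous on the compact set $\Sd$, they stay in a compact subset of $(0,\infty)$, on which the auxiliary maps $t\mapsto \sqrt t$, $t\mapsto t^p$ and quotients are smooth. Composing with the H\"older inputs $A,B$ then yields $f,g\in C^{0,\alpha}(\Sd)$ for the common H\"older exponent $\alpha$ of $A$ and $B$.

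Propagating H\"older regularity from $\Sd$ to $K\colon\Rd\to\R$ is the main obstacle, because the angular coordinate $\theta=x/|x|$ degenerates at the origin and the cut-off $\chi_{[0,g(\theta)]}(r)$ introduces a free boundary $\{r=g(\theta)\}$ whose position depends on $\theta$. I plan to write $K(x)=f(\theta)\,r^2\max(g(\theta)-r,0)^2$ and to bound $|K(x_1)-K(x_2)|$ by splitting on the relative size of $\min_i|x_i|$ and $|x_1-x_2|$. In the regime $\min_i|x_i|\le 2|x_1-x_2|$, both points lie near the origin, where $|K(x_i)|\le C|x_i|^2\le C'|x_1-x_2|^2$, which is much stronger than the desired $|x_1-x_2|^\alpha$. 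In the complementary regime, $|x_1|$ and $|x_2|$ are comparable, so the standard estimate $|\theta_1-\theta_2|\le 2|x_1-x_2|/\min_i|x_i|$ applies, and combining the H\"older continuity of $f,g$ with the local Lipschitz continuity of the polynomial $(r,s)\mapsto r^2\max(s-r,0)^2$ yields the H\"older estimate with the factor $r^2$ absorbing any blow-up of angular distance. The second-order vanishing of $(g(\theta)-r)^2$ at the outer free boundary $r=g(\theta)$ likewise prevents a jump: if $x_2$ lies outside $\supp K$ and $x_1$ inside, then $g(\theta_1)-r_1\le |g(\theta_1)-g(\theta_2)|+|r_2-r_1|\le C|x_1-x_2|^{\alpha}$, whose square is a controlled error.

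To conclude, $K$ is bounded, continuous, non-negative, and strictly positive on a nonempty open set (for instance near $r=g(\theta)/2$), so $0<\|K\|_{L^1}<\infty$ and $K/\|K\|_{L^1}$ is well-defined. Normalization preserves non-negativity, evenness, compact support, and H\"older regularity, while fixing total mass one. This is exactly Definition~\ref{suit:K} together with condition (I) of Theorem~\ref{anis:main}.
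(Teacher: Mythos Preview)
Your proposal is correct and follows essentially the same approach as the paper: both arguments observe that $A,B$ take values in a compact subset of $(0,\infty)$, on which the algebraic maps defining $f,g$ are Lipschitz, and then that $(f,g,r)\mapsto f\,r^2\max(g-r,0)^2$ is Lipschitz on the relevant compact range, so the composition inherits the H\"older exponent of $A,B$. Your treatment is more explicit than the paper's in two places: you spell out how the factor $r^2$ absorbs the blow-up of the angular variable $\theta=x/|x|$ near the origin (the paper does not comment on this), and you note separately that the second-order vanishing of $(g(\theta)-r)^2$ at the free boundary $\{r=g(\theta)\}$ prevents a jump, which the paper compresses into the single phrase ``the cut-off occurs in the roots of the function.''
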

\begin{proof}
We observe that $A$ and $B$ map the compact set $\Sd$ continuously on $\R_+$. Thus $\Im(A)$ and $\Im(B)$ are compact subsets of the positive numbers. Restricted to these sets, the dependence of $f$ and $g$ on $A$ and $B$ is Lipschitz continuous. Moreover, the images of $f$ and $g$ are again compact subsets of the positive numbers and both functions are even. Then $K$ is non-negative, even and compactly supported. Since the cut-off occurs in the roots of the function, the dependence of $K$ on $f$ and $g$, on the compact subset of the positive numbers, is Lipschitz continuous. Thus the dependence of $K$ on $A$ and $B$ is Lipschitz continuous. The H\"older continuity of $A$ and $B$ implies that $K$ is H\"older continuous.
\end{proof}

We still want to obtain $A$ and $B$ from $\sigma$ and $\mu$ respectively. In two dimensions the connection between mobility and mobility generating directional distribution 
\begin{align*}
\frac{1}{\mu(\theta)} = \int_{ \{x \in \mathbb{S}^{d-1} | x \cdot \theta = 0 \} } B(x) d\mathcal{H}^{d-2}
\end{align*}
becomes trivial as, for $\theta^{\perp}$ one of the two vectors orthogonal to $\theta$, both having equal value on the even function $\mu$,
\begin{align*}
\frac{1}{\mu(\theta^{\perp})} = 2B(\theta).
\end{align*}
The function $g$ then simplifies to
\begin{align*}
g(\theta) = \sqrt{14 A(\theta) \mu(\theta^{\perp})}
\end{align*}
and subsequently, $f$ simplifies to
\begin{align*}
f(\theta) = \frac{15}{2} (14 A(\theta))^{-5/2} \left( \frac{1}{\mu(\theta^{\perp})} \right)^{7/2}.
\end{align*}
It remains to analyze how to obtain $A$ from
\begin{align*}
\sigma(\nu) = \frac{1}{2} \int_{\mathbb{S}^{d-1}} |\nu \cdot \theta| A(\theta) d\mathcal{H}^{d-1}(\theta).
\end{align*}
In two dimensions we can parametrize the circle, and since $A$ is even
\begin{align*}
\sigma(x) &= \frac{1}{2} \int_0^{2\pi} |\cos(y-x)| A(y) dy\\
&= \int_0^{2\pi} (\cos(y-x))_+ A(y) dy = \int_{-\pi/2}^{\pi/2} \cos(y) A(y+x) dy.
\end{align*}
Noticing the linear dependency between $\sigma$ and $A$ and that $\tilde{A} = A(\cdot + a)$ generates $\tilde{\sigma} = \sigma(\, \cdot \, + a)$, we can choose a class of functions containing a basis and calculate the tension functions generated by functions of this class. Approximating $\sigma$ with the calculated basis then approximates the corresponding tension generating directional distribution $A$. A classical choice for such a class of measurable even functions on the $1$-sphere, parametrized on $[0,2\pi)$, is
\begin{align*}
A_{a,b}(\cdot) \coloneqq \chi_{(-b,b)\cup(\pi-b,\pi+b)}(\, \cdot \, + a)
\end{align*}
for all $a \in [0,\pi)$ and $b \in [0,\pi/2]$. As $\tilde{A} = A(\cdot + a)$ generates $\tilde{\sigma} = \sigma(\, \cdot \, + a)$, let $a$ be zero w.l.o.g. We calculate 
\begin{align*}
\sigma_{A_{0,b}}(x) &= \frac{1}{2} \int_{(-b,b)\cup(\pi-b,\pi+b)} |\cos(y-x)| dy\\
&= 2\sgn(\cos(x + b)) \sin(x) \cos(b) + 2\chi_{ \{\cos(x)^2 > \sin(b)^2\}}(x).
\end{align*}

\begin{figure}
\centering
\includegraphics[width=0.5\textwidth]{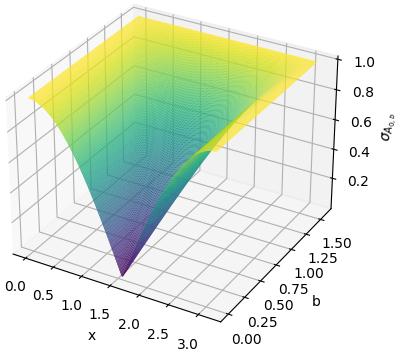}
\caption{Surface tensions generated by $A_{0,b}$}
\label{fig:surf tension 3d plot}
\end{figure}

We plot this one parameter family of functions, normalized in the $C^0$-norm in Figure~\ref{fig:surf tension 3d plot}. Any surface tension $\sigma$ that is generated by a non-negative $L^1$-function $A$ can be constructed as a sum of weighted and shifted profiles
\begin{align*}
\sigma(x) = \sum_{k \in \mathbb{N}} c_k \, \sigma_{A_{0,b_k}}(x-a_k)
\end{align*}
for sequences $(a_k) \in [0,\pi)^{\infty}$, $(b_k) \in (0,\pi/2]^{\infty}$ and $(c_k) \in (\R_{\geq 0})^{\infty}$.\\

\subsection{A kernel inducing backwards-in-time motion by mean curvature}
\label{subsec:backwards-in-time}

Next, we construct kernels of negative initial motion. By Theorem~\ref{negative:Ker} some partially negative kernels have uniformly converging initial motion. In particular, this is true for $C_c^{0,\alpha}$-kernels with
\begin{align*}
\int_0^r s^{d-1} K(s\theta) ds > 0
\end{align*}
for all $\theta \in \Sd$ and $r>0$. But the surface tension depends on the first moment of the kernel. Thus we can construct kernels with negative mass on the outside to generate a kernel satisfying all conditions that is associated to a negative surface tension $\sigma_K$. Let for example $K$ be rotation invariant and in any direction be the sum of three hat functions, cf.\ Figure~\ref{fig:3hats}, weighed such that
\begin{align*}
\int_{\R^d} K(x) dx &= 1,\\
\frac{1}{\mu_K(n)} = 2\int_{(n \cdot x)=0} K(x) d\mathcal{H}^{d-1}(x) &=1,\\
\sigma_K(n) = \frac{1}{2}\int_{\R^d} |n \cdot x| K(x) dx &=-1.
\end{align*}
Since $K$ is rotation invariant we write $\tilde{K}(|x|)\coloneqq K(x)$. The conditions then can be expressed as
\begin{align*}
\int_0^{\infty} r^{d-2} \tilde{K}(r) dr &= \frac{\Gamma((d-1)/2)}{4 \pi^{(d-1)/2}},\\
\int_0^{\infty} r^{d-1} \tilde{K}(r) dr &= \frac{\Gamma(d/2)}{2 \pi^{d/2}},\\
\int_0^{\infty} r^d \tilde{K}(r) dr &= -\frac{2 (d-1) \Gamma((d-1)/2)}{\pi^{(d-1)/2}}.
\end{align*}
We then can test positions of the three hat functions such that the first weight is positive and the third is negative. Since we require a $\tilde{\theta} \in \Sd$ with
\begin{align*}
\int_0^r s^{d-1} K(s\tilde{\theta}) ds > 0
\end{align*}
for all $s>0$, the first position has to be zero. In two dimensions a solution is
\begin{align*}
\tilde{K}(r) = &\frac{185\pi-384}{576\pi}(1-|2r-1|)^+ + \frac{139\pi + 240}{144\pi}(1-|2r-7|)^+\\
&- \frac{151\pi + 192}{192\pi}(1-|2r-9|)^+.
\end{align*}
While the initial motion of this kernel converges to that of backwards-in-time mean curvature flow, we suspect the process to be unstable. A plane would remain stationary and a ball would expand uniformly. But a surface oscillating on a large scale would gain sharp spikes and surfaces repel each other.\\

\begin{figure}
\centering
\begin{tikzpicture}

\draw [-latex] (0,-1.5) -- (0,2);

\draw [-latex] (-0.1,0) -- (5.5,0);

\draw [red] (0,0) -- (0.5,0.109) -- (1,0) -- (3,0) -- (3.5,1.5) -- (4,0) -- (4.5,-1.105) -- (5,0) -- (5.2,0);

\draw (-0.1,-1) -- (0,-1);

\draw (-0.1,1) -- (0,1);

\node [left = 0pt of {(0,1)}] {$1$};

\node [left = 0pt of {(0,0)}] {$0$};

\node [left = 0pt of {(0,-1)}] {$-1$};

\node [below = 0pt of {(1,0)}] {$1$};

\node [below = 0pt of {(2,0)}] {$2$};

\node [below = 0pt of {(3,0)}] {$3$};

\node [below = 0pt of {(4,0)}] {$4$};

\node [below = 0pt of {(5,0)}] {$5$};

\end{tikzpicture}
\caption{The function $\tilde{K}$ - the relief of the kernel}
\label{fig:3hats}
\end{figure}
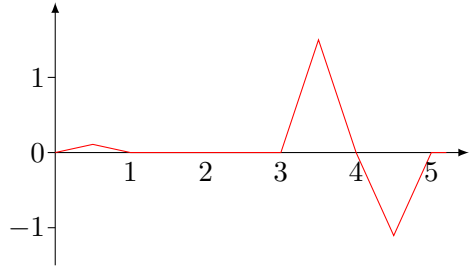

\subsection{Fine properties of thresholding and fattening of level sets}
\label{subsec:fine properties}

We remember Esedo\u{g}lu and Otto's minimization problem \eqref{variational}
\begin{align*}
&\underset{E \subs \Rd}{\argmin}\left\{ \frac{1}{\sqrt{h}} \int_{E^c} K_h \ast \chi_E dx + \frac{1}{\sqrt{h}} \int_{\Rd} (\chi_E - \chi_{E_k}) K_h \ast (\chi_E - \chi_{E_k}) dx \right\}\\
=\, &\underset{E \subs \Rd}{\argmin}\left\{ \int_E \left( 1 - 2(K_h \ast \chi_{E_k}) \right) dx \right\}.
\end{align*}
Up to null sets, any minimizer contains the set resulting from thresholding $T_{K_h}E_k = \{x \in \Rd | (K_h \ast \chi_{E_k})(x) > 1/2 \}$. But the minimization is indifferent on the set $(T_{K_h}E_k)^c \setm T_{K_h}(E_k^c) = \{x \in \Rd | (K_h \ast \chi_{E_k})(x) = 1/2\}$. For the classical thresholding algorithm this level set is a nullset. Indeed, since the Gaussian is an analytical function and for a bounded set $E_k$, the function $\chi_{E_k}$ is measurable and bounded with compact support, the convolution is again an integrable analytical function. The level set is thus a nullset and has no impact on the process. \\

On more general classes of kernels we find examples, where the level set has positive measure, see Examples~\ref{ex: vanishing}, \ref{ex: pointsymmetric}, \ref{ex: decreasing} below. This means that the thresholding algorithm \eqref{eq:def_thresholding} is, for general kernels, not symmetric with respect to the set complement. Esedo\u{g}lu and Otto's minimization problem \eqref{variational}, however, is symmetric with respect to the set complement. This discrepency can be explained easily: As indicated above, the minimization problem possibly has a large class of solutions. Thresholding chooses the smallest solution with respect to set inclusion (up to null sets). We firstly demonstrate the possible size of the level set.
\begin{example}
\label{ex: vanishing}
Let $E = \{ x \in  \Rd | \left \lfloor{x_d}\right \rfloor/2 \in \mathbb{Z} \}$ and $K = 2^{-d} \chi_{[-1,1]^d}$, see Figure~\ref{fig:constconv}. Then $(K \ast \chi_{E}) \equiv 1/2$ and $T_KE = T_K(E^c) = \varnothing$. Moreover, we can choose $\tilde{E} \coloneqq E \cap U$, for $U$ a large bounded domain. Then $(K\ast \chi_{\tilde{E}})(x) = 1/2$ if $\dist(x,U^c)\geq 2^{1/d}$.
\end{example}
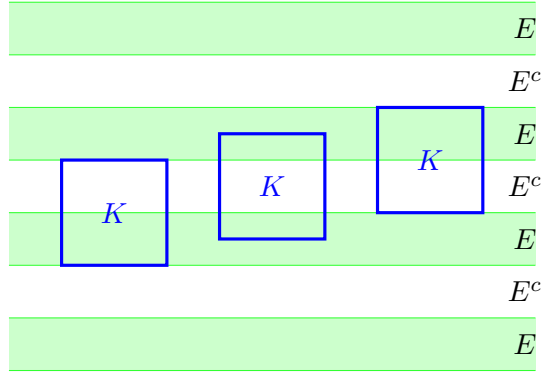
\begin{figure}
\centering
\begin{tikzpicture}[scale=.7]
\fill[green!20] (-5,0) -- (5,0) -- (5,1) -- (-5,1) -- cycle;

\fill[green!20] (-5,2) -- (5,2) -- (5,3) -- (-5,3) -- cycle;

\fill[green!20] (-5,4) -- (5,4) -- (5,5) -- (-5,5) -- cycle;

\fill[green!20] (-5,6) -- (5,6) -- (5,7) -- (-5,7) -- cycle;

\draw[green!80] (-5,0) -- (5,0);

\draw[green!80] (-5,1) -- (5,1);

\draw[green!80] (-5,2) -- (5,2);

\draw[green!80] (-5,3) -- (5,3);

\draw[green!80] (-5,4) -- (5,4);

\draw[green!80] (-5,5) -- (5,5);

\draw[green!80] (-5,6) -- (5,6);

\draw[green!80] (-5,7) -- (5,7);

\draw[blue, very thick] (-4,2) -- (-2,2) -- (-2,4) -- (-4,4) -- cycle;

\draw[blue, very thick] (-1,2.5) -- (1,2.5) -- (1,4.5) -- (-1,4.5) -- cycle;

\draw[blue, very thick] (2,3) -- (4,3) -- (4,5) -- (2,5) -- cycle;

\node[blue] at (-3,3) (a) {$K$};

\node[blue] at (0,3.5) (b) {$K$};

\node[blue] at (3,4) (c) {$K$};

\node at (4.8,.5) {$E$};

\node at (4.8,1.5) {$E^c$};

\node at (4.8,2.5) {$E$};

\node at (4.8,3.5) {$E^c$};

\node at (4.8,4.5) {$E$};

\node at (4.8,5.5) {$E^c$};

\node at (4.8,6.5) {$E$};
\end{tikzpicture}
\caption{In any position the convolution evaluates $1/2$}
\label{fig:constconv}
\end{figure}
One may suspect this to result from the kernel $K$ not being smooth. However, our next example is a smooth and point-symmetric kernel. To construct it, we first define the one- and $d$-dimensional bump functions
\begin{align*}
\eta(s) \coloneqq \exp \left( \frac{s^2}{s^2 - 1} \right) \chi_{[-1,1]}(s), \quad \eta_d(x) \coloneqq \prod_{i=1}^d \eta(x_i).
\end{align*}
We use this to additionally construct a function that smoothly connects the constant functions $0$ and $1$
\begin{align*} 
\tilde{\eta}(s) \coloneqq \eta(\eta(s-1))\chi_{[0,1]}(s)
\end{align*}
\begin{example}
\label{ex: pointsymmetric}
Let $E=B_{2^{-1/d}}(0)$ and for some small positive $h$ we set
\begin{align*}
\tilde{K}(x)=(\chi_{B_1(0)} \ast \tilde{\eta}(|\cdot|/h))(x).
\end{align*}
We choose the normalized kernel $K\coloneqq\tilde{K}/\|\tilde{K}\|_{L^1}$. Then $K\in C_c^{\infty}$ is point-symmetric and constant on $B_{1-2^{1/d}h}(0)$. Thus $(K \ast \chi_{E_0})(x)=1/2$ for $x\in B_{1-2^{-1/d}-2h}(0)$, which is a set of positive measure for $h$ small enough.
\end{example}
In these two examples we relied on the convolution kernel being constant on some part of their support. We conclude with an example, where the kernel is smooth and on its support strictly decreasing out of the origin, in the sense that for any $t<1$ and $x\in \supp(K)$ we have $K(tx)<K(x)$. 
\begin{example}
\label{ex: decreasing}
We construct the kernel such that
\begin{align*}
F(y) \coloneqq \int_{\{x_d = y\}}K(x)d\mathcal{H}^{d-1}(x)
\end{align*}
is constant on a centralized strip that contains more than half of the mass of the kernel. Then we can choose the set $E_0$ to be a disc of the proper thickness. For some large number $R$, let $E \coloneqq B^{d-1}_R(0) \times [-3/2,3/2]\|\tilde{\eta}\|_{L^1}$. We further set $\varphi(x) = 1 - \eta(x/(2\|\tilde{\eta}\|_{L^1}))/4$ and
\begin{align*}
\tilde{K}(X,y) \coloneqq \left\{ \begin{array}{lr} \varphi(y)^{1-d} \, \eta_{d-1}\left( \frac{X}{\varphi(y)} \right), &|y| \leq 2\|\tilde{\eta}\|_{L^1} \\
								\tilde{\eta}(y) \, \eta_{d-1}(X), &|y| > 2\|\tilde{\eta}\|_{L^1}. \end{array} \right.
\end{align*}
Then we choose the suitable kernel $K=\tilde{K}/\|\tilde{K}\|_{L^1}$ (see Figure~\ref{fig:decreasing kernel} for a plot in two dimensions). By shifting with $\varphi$, we notice that $F$ is constant on $[-2\|\tilde{\eta}\|_{L^1},2\|\tilde{\eta}\|_{L^1}]$, where more than half of the mass of $K$ is located. Thus we can shift the kernel in vertical direction without changing the convolution with the characteristic function of the disc. Indeed for the given set $E$ we find that
\begin{align*}
(K \ast \chi_{E})(x) = \frac{1}{2} \text{ on } {B_{R-1}^{d-1}(0) \times [-1/4,1/4]\|\tilde{\eta}\|_{L^1}},
\end{align*}
i.e., the convolution is $1/2$ on a thinner disc.
\end{example} 

\begin{figure}[!h]
\centering
\includegraphics[width=0.48\textwidth]{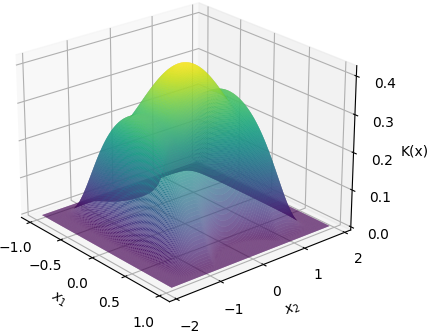}
\includegraphics[width=0.48\textwidth]{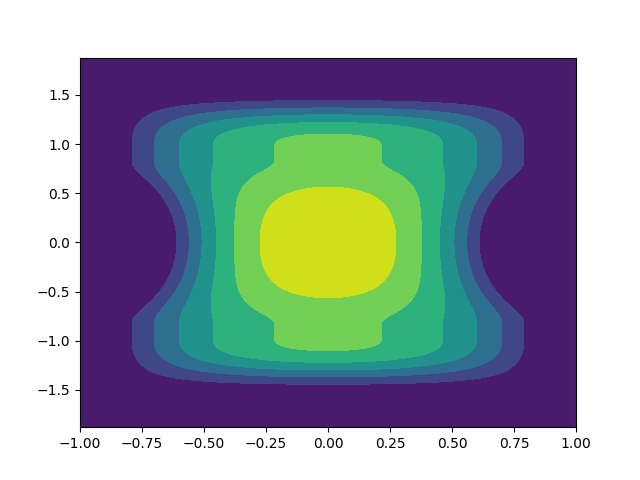}
\caption{Plot of $K$ in Example~\ref{ex: decreasing}}
\label{fig:decreasing kernel}
\end{figure}

\section*{Acknowledgments}
The present paper is an extension of the first author's master's thesis at the  University of Bonn.
This project has received funding from the Deutsche Forschungsgemeinschaft (DFG, German Research Foundation) under Germany's Excellence Strategy -- EXC-2047/1 -- 390685813.

\bibliographystyle{amsplain}
\bibliography{refs2}
\end{document}